\theoremstyle{plain}
\newtheorem{thm}{Theorem}[section]
\newtheorem{lem}[thm]{Lemma}
\newtheorem{cor}[thm]{Corollary}
\newtheorem{prop}[thm]{Proposition}
\newtheorem{conj}[thm]{Conjecture}
\newcommand{\ex}{\operatorname{ex}}
\newcommand{\px}{\operatorname{opx}}
\newcommand{\sat}{\operatorname{sat}}
\newcommand{\dist}{\operatorname{dist}}
\newcommand{\osat}{\operatorname{osat}}
\title{Continuous Tur\'{a}n numbers}
\date{}
\author{Jesse Geneson\\
\small\tt geneson@gmail.com
}
\begin{document}
\maketitle

\begin{abstract} 
One of the most famous open problems in combinatorics is the Zarankiewicz problem, which asks for the maximum number of ones in an $n \times n$ matrix that has no $s \times t$ submatrix of all ones. The K\H{o}vari-S\'{o}s-Tur\'{a}n theorem provides an upper bound of $O(s^{\frac{1}{t}}n^{2-\frac{1}{t}})$ for this problem for fixed $t \ge 2$, 
which is known to be sharp in some cases. The Zarankiewicz problem is a subproblem of the more general problem of determining the maximum number of ones $\ex(n, M)$ in an $n \times n$ 0-1 matrix that avoids the forbidden 0-1 matrix $M$. 

In this paper, we define a notion of containment and avoidance for subsets of $\mathbb{R}^2$. Then we introduce a new, continuous and super-additive extremal function for subsets $P \subseteq \mathbb{R}^2$ called $\px(n, P)$, which is the supremum of $\mu_2(S)$ over all open $P$-free subsets $S \subseteq [0, n]^2$, where $\mu_2(S)$ denotes the Lebesgue measure of $S$ in $\mathbb{R}^2$. We show that $\px(n, P)$ fully encompasses $\ex(n, M)$ up to a constant factor. More specifically, we define a natural correspondence between finite subsets $P \subseteq \mathbb{R}^2$ and 0-1 matrices $M_P$, and we prove that $\px(n, P) = \Theta(\ex(n, M_P))$ for all finite subsets $P \subseteq \mathbb{R}^2$, where the constants in the bounds depend only on the distances between the points in $P$. 

We also discuss bounded infinite subsets $P$ for which $\px(n, P)$ grows faster than $\ex(n, M)$ for all fixed 0-1 matrices $M$. In particular, we show that $\px(n, P) = \Theta(n^{2})$ for any open subset $P \subseteq \mathbb{R}^2$. We prove an even stronger result, that if $Q_P$ is the set of points with rational coordinates in any open subset $P \subseteq \mathbb{R}^2$, then $\px(n, Q_P) = \Theta(n^2)$. Finally, we obtain a strengthening of the K\H{o}vari-S\'{o}s-Tur\'{a}n theorem that applies to infinite subsets of $\mathbb{R}^2$. Specifically, for subsets $P_{s, t, c} \subseteq \mathbb{R}^2$ consisting of $t$ horizontal line segments of length $s$ with left endpoints on the same vertical line with consecutive segments a distance of $c$ apart, we prove that $\px(n, P_{s, t,c}) = O(s^{\frac{1}{t}}n^{2-\frac{1}{t}})$, where the constant in the bound depends on $t$ and $c$. When $t = 2$, we show that this bound is sharp up to a constant factor that depends on $c$. We also extend $\px(n, P)$ to any number of dimensions, and we generalize most of our results including the strengthening of the K\H{o}vari-S\'{o}s-Tur\'{a}n theorem to any number of dimensions.
\end{abstract}

\section{Introduction}
We say that a 0-1 matrix $A$ \emph{contains} a 0-1 matrix $B$ if some submatrix of $A$ can be turned into $B$ by changing some number of ones to zeroes. Otherwise $A$ \emph{avoids} $B$, i.e. $A$ is \emph{$B$-free}. For any 0-1 matrix $M$ and $n \in \mathbb{Z}^{+}$, define $\ex(n, M)$ to be the maximum number of ones in an $M$-free $n \times n$ 0-1 matrix. This extremal function has been investigated since at least seventy years ago, when Zarankiewicz posed the problem of finding $\ex(n, M)$ for matrices $M$ of all ones \cite{zp}. The famous K\H{o}vari-S\'{o}s-Tur\'{a}n theorem gives a partial solution to this problem. 

In addition to the longstanding open Zarankiewicz problem, research on $\ex(n, M)$ has also focused on other well-known classes of 0-1 matrices like permutation matrices. In particular, Marcus and Tardos \cite{MT} showed that every permutation matrix $M$ has $\ex(n, M) = O(n)$, and used this fact to prove the Stanley-Wilf conjecture using results from \cite{FH} and \cite{k}.  F\"{u}redi used the extremal function $\ex(n, M)$ to obtain the sharpest known upper bound of $O(n \log{n})$ on the maximum number of unit distances in a convex $n$-gon \cite{furu}, while Mitchell applied $\ex(n, M)$ to bound the complexity of an algorithm for path minimization in a rectlinear grid with obstacles \cite{mitchell}.

In this paper, we present a new extremal function for subsets of $\mathbb{R}^2$, which fully encompasses $\ex(n, M)$ up to a constant factor. We show that all past results about $\ex(n, M)$ can be translated into results about this new extremal function. In particular, any sharp bounds on $\ex(n, M)$ for a given 0-1 matrix $M$ imply the same sharp bounds on $\px(n, P)$ up to a constant factor for a set of points $P \subseteq \mathbb{R}^2$ that corresponds to $M$. Moreover for any finite set of points $P$, we show that there is a matrix $M$ for which $\px(n, P) = \Theta(\ex(n, M))$. We also consider infinite sets of points, like a single line segment, or two horizontal line segments with left endpoints in the same column and right endpoints in the same column, and more generally vertical stacks of any number of horizontal line segments. We show that these forbidden stacks of horizontal line segments have extremal functions that behave like the Tur\'an numbers of complete bipartite graphs, and we obtain a strengthening of the K\H{o}vari-S\'{o}s-Tur\'{a}n theorem for these forbidden unions of segments, using the measure-theoretic form of Jensen's inequality and Fubini's theorem.

Before describing these results in more detail, we define the new extremal function. Suppose that $P$ and $S$ are both subsets of $\mathbb{R}^2$. Let $X_P$ be the set of all $x$-coordinates of points in $P$, and let $Y_P$ be the set of all $y$-coordinates of points in $P$. Similarly let $X_S$ be the set of all $x$-coordinates of points in $S$, and let $Y_S$ be the set of all $y$-coordinates of points in $S$. We say that $S$ \emph{contains} $P$ if there exist functions $f_X: X_P \rightarrow X_S$ and $f_Y: Y_P \rightarrow Y_S$ such that all statements below are true: 
\begin{enumerate}
\item $\frac{f_X(x_0)-f_X(x_1)}{x_0-x_1} \ge 1$ for all $x_0, x_1 \in X_P$ with $x_0 > x_1$.
\item $\frac{f_Y(y_0)-f_Y(y_1)}{y_0-y_1} \ge 1$ for all $y_0, y_1 \in Y_P$ with $y_0 > y_1$.
\item For all $(x, y) \in P$, we have $(f_X(x),f_Y(y)) \in S$.
\end{enumerate}

If $S$ does not contain $P$, then $S$ \emph{avoids} $P$, i.e. $S$ is \emph{$P$-free}. For any Lebesgue-measurable subset $S \subseteq \mathbb{R}^d$, let $\mu_d(S)$ denote the $d$-dimensional Lebesgue measure of $S$. For any  subset $P \subseteq \mathbb{R}^2$ and $n \in \mathbb{R}^{+}$, define $\px(n, P)$ as the supremum of $\mu_2(S)$ over all open $P$-free subsets $S \subseteq [0, n]^2$.

For any subset $P \subseteq \mathbb{R}^2$, we say that its \emph{rows} are its maximal subsets with the same $y$-coordinates and its \emph{columns} are its maximal subsets with the same $x$-coordinates. Given any \emph{finite} subset $P \subseteq \mathbb{R}^2$, define $M_P$ to be the 0-1 matrix with the same number of rows and columns as $P$, so that the rows of the matrix $M_P$ correspond to the rows of the subset $P$ in the same order from top to bottom, the columns of $M_P$ correspond to the columns of $P$ in the same order from left to right, and $M_P$ has a one in each entry corresponding to an element of $P$ and a zero in each other entry. We prove that $\px(n, P) = \Theta(\ex(n, M_P))$ for all finite subsets $P \subseteq \mathbb{R}^2$.

Thus $\px(n, P)$ fully encompasses the extremal function $\ex(n, M)$ up to a constant factor, since any 0-1 matrix can be turned into a corresponding finite subset of $\mathbb{R}^2$, with points replacing the ones. However, $\px(n, P)$ also includes problems that have no analogue in 0-1 matrices, as there exist bounded subsets $P \subseteq \mathbb{R}^2$ such that $\ex(n, M) = o(\px(n, P))$ for all 0-1 matrices $M$. In particular we show that $\px(n, P) = \Theta(n^2)$ for every open subset $P \subseteq \mathbb{R}^2$.

We also prove a strengthening of the K\H{o}vari-S\'{o}s-Tur\'{a}n theorem. We show that for every fixed integer $t \ge 2$, if $P_{s, t, c}$ consists of $t$ horizontal segments of length $s$ with all left endpoints in the same column and all consecutive segments a distance of $c$ apart, then $\px(n, P_{s, t, c}) = O(s^{\frac{1}{t}} n^{2-\frac{1}{t}})$, where the constants in the bound depend on $t$ and $c$.  Note that $P_{s, t, c}$ looks like an equal sign ($=$) when $t = 2$ and an equivalence symbol ($\equiv$) when $t = 3$. In the case that $t = 2$, our upper bound is sharp up to a constant factor that depends on $c$ using the result of F\"{u}redi \cite{fur2} that $\ex(n, J_{s, 2}) = \Theta(s^{\frac{1}{2}}n^{\frac{3}{2}})$.

Section \ref{basic_prop} focuses on general properties of $\px(n, P)$. In Section \ref{basic_prop0}, we prove some basic properties of $\px(n, P)$ which are analogous to properties of $\ex(n, M)$. In Section \ref{superadd} we prove that $\px(n, P)$ is continuous and super-additive for all $P \subseteq \mathbb{R}^2$. In Section \ref{pxops}, we prove for subsets $P$ with a rightmost column that $\px(n, P)$ only increases by at most $c n$ when we add a horizontal segment of length $c$ to $P$ with its left endpoint on a point in the rightmost column of $P$. We also prove an upper bound on the increase in $\px(n, P)$ for finite subsets $P \subseteq \mathbb{R}^2$ when they are dilated.

Section \ref{mainresult} focuses on connections between $\px(n, P)$ and $\ex(n, M)$. In Section \ref{mainsub}, we prove that $\px(n, P) = \Theta(\ex(n, M_P))$ for all  subsets $P \subseteq \mathbb{R}^2$. The proof splits into a lower bound, where for each finite  subset $P \subseteq \mathbb{R}^2$ we construct an open subset $S \subseteq [0, n]^2$ to show that $\px(n, P) = \Omega(\ex(n, M_P))$. Then we prove that $\px(n, P) = O(\ex(n, M_P))$, using a transformation from open subsets of $\mathbb{R}^2$  to 0-1 matrices. In Section \ref{01cor}, we discuss some corollaries of the main result of Section \ref{mainsub}. In particular, we show that for every finite subset $P \subseteq \mathbb{R}^2$  there exists a constant $\epsilon > 0$ such that $\px(n, P) = O(n^{2-\epsilon})$. 

Section \ref{infinite_p} focuses on bounded infinite subsets $P$. In Section \ref{bin2}, we show that there are bounded infinite subsets $P \subseteq \mathbb{R}^2$ for which $\px(n, P) = \Theta(n^{2})$. In particular, we show that $\px(n, P) = \Theta(n^2)$ for every open subset $P \subseteq \mathbb{R}^2$, where the constants in our bounds depend on $P$.  We show this by proving an even stronger result, that $\px(n, Q_P) = \Theta(n^2)$ where $Q_P$ is the set of points with rational coordinates in $P \subseteq \mathbb{R}^2$.

In Section \ref{skst}, we prove the strengthening of the K\H{o}vari-S\'{o}s-Tur\'{a}n theorem for vertically stacked unions of horizontal segments. In Section \ref{segperm}, we show that if $P$ is a disjoint union of a finite number of horizontal segments with no two points having the same $x$-coordinate and no two segments having the same $y$-coordinate, then $\px(n, P) = O(n)$.

In Section \ref{higher_d}, we extend the definition of $\px(n, P)$ to $\mathbb{R}^d$ and we generalize most of our results including the strengthening of the K\H{o}vari-S\'{o}s-Tur\'{a}n theorem to any number of dimensions. We use the generalized version of the strengthening of the K\H{o}vari-S\'{o}s-Tur\'{a}n theorem to derive sharp bounds on Tur\'an numbers of forbidden subsets of $\mathbb{R}^3$, including forbidden sets of points that look like two plus signs ($+$) with one directly above the other.

In Section \ref{openpr}, we discuss future directions for research including open problems.

\section{Properties of $\px(n, P)$}\label{basic_prop}

In this section, we prove basic facts about $\px(n, P)$. We start with several observations that have quick proofs. Then we show that $\px(n, P)$ is super-additive and continuous. We also investigate the effect of simple modifications to $P$ on the value of the extremal function $\px(n, P)$. For subsets $P$ with a rightmost column, we prove a sharp upper bound of $c n$ on the increase in $\px(n, P)$ when we add a horizontal segment of length $c$ to $P$ with its left endpoint on some point in the rightmost column of $P$. We also bound the increase in $\px(n, P)$ for finite subsets $P \subseteq \mathbb{R}^2$ when we dilate $P$.

\subsection{Basic observations about $\px(n, P)$}\label{basic_prop0}

It is well-known that if $M'$ is obtained from the 0-1 matrix $M$ by $90^{\circ}$ rotation, or horizontal or vertical reflection, then we have $\ex(n, M') = \ex(n, M)$. If $M' = R(M)$ where $R$ is the transformation that we apply to $M$ to get $M'$, and if $A$ is an $n \times n$ 0-1 matrix that avoids $M$, then $R(A)$ avoids $M'$ and $R(A)$ is still an $n \times n$ 0-1 matrix when $R$ is a $90^{\circ}$ rotation, horizontal reflection, or vertical reflection. Thus we have $\ex(n, M) \le ex(n, M')$. Similarly we obtain $\ex(n, M) \ge ex(n, M')$ since $M = R^{-1}(M')$, so $\ex(n, M) = ex(n, M')$. Below we observe an analogous fact for $\px(n, P)$.

\begin{lem}
If $P' \subseteq \mathbb{R}^2$ is obtained from $P \subseteq \mathbb{R}^2$ by $90^{\circ}$ rotation, or horizontal or vertical reflection, then we have $\px(n, P') = \px(n, P)$.
\end{lem}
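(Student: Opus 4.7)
The plan is to exhibit, for each of the three allowed transformations, a measure-preserving bijection between the open $P$-free subsets of $[0,n]^2$ and the open $P'$-free subsets of $[0,n]^2$. Let $R : \mathbb{R}^2 \to \mathbb{R}^2$ be the corresponding isometry (for example $R(x,y) = (-x, y)$ for horizontal reflection, or $R(x,y) = (-y, x)$ for a $90^\circ$ rotation), and let $R_n = T \circ R$ where $T$ is the translation that returns $R([0,n]^2)$ to $[0,n]^2$. Then $R_n$ is an affine isometry of $[0,n]^2$ onto itself, and I would send each $S \subseteq [0,n]^2$ to $R_n(S)$.

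The key step is to check that $S$ contains $P$ if and only if $R(S)$ contains $R(P) = P'$. Suppose $S$ contains $P$ via functions $f_X : X_P \to X_S$ and $f_Y : Y_P \to Y_S$ satisfying the slope-$\ge 1$ conditions. For horizontal reflection, I would set $f'_X(x) = -f_X(-x)$ on $X_{P'} = -X_P$ and $f'_Y = f_Y$; for $x_0 > x_1$ in $X_{P'}$,
\[
\frac{f'_X(x_0) - f'_X(x_1)}{x_0 - x_1} \;=\; \frac{f_X(-x_1) - f_X(-x_0)}{(-x_1) - (-x_0)} \;\ge\; 1,
\]
and for $(x,y) \in P'$ the preimage $(-x,y)$ lies in $P$, so $(f_X(-x), f_Y(y)) \in S$, which is equivalent to $(f'_X(x), f'_Y(y)) \in R(S)$. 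Vertical reflection is symmetric. For the $90^\circ$ rotation $R(x,y) = (-y, x)$, one sets $f'_X(x) = -f_Y(-x)$ on $X_{P'} = -Y_P$ and $f'_Y(y) = f_X(y)$ on $Y_{P'} = X_P$ and checks the two slope conditions and the point-incidence condition in the same way. The reverse implication follows by running the same argument with $R^{-1}$, which is again one of the allowed transformations (or a composition of two of them).

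Finally, I would verify that: (i) the translation $T$ does not destroy $P'$-freeness, because shifting both $f'_X$ and $f'_Y$ by the components of the translation vector preserves all slopes and all point incidences; (ii) $R_n$ is a homeomorphism of $[0,n]^2$, so $R_n(S)$ is open whenever $S$ is; and (iii) $R_n$ is an isometry and therefore preserves $\mu_2$. Taking the supremum over $S$ yields $\px(n, P') \ge \px(n, P)$, and the symmetric argument with $R^{-1}$ in place of $R$ gives the reverse inequality.

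The only real subtlety, rather than a genuine obstacle, is the sign-flip hidden in the formula $f'_X(x) = -f_X(-x)$ used for reflections and in the analogous formula used for rotations. A naive choice like $f'_X = f_X \circ (\text{negate})$ would reverse monotonicity and violate the slope-$\ge 1$ condition, so one must also negate the output. Because the negation is then applied to both the input and the output, the slope is preserved exactly, and the rest of the argument is bookkeeping.
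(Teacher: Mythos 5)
Your proposal is correct and follows essentially the same route as the paper: apply the isometry $R$ to any $P$-free open subset $S$, translate the image back into $[0,n]^2$, and use symmetry of $R$ and $R^{-1}$ to get both inequalities. The paper states the key step (that $R(S)$ avoids $P'$) without proof, whereas you verify it explicitly via the sign-flipped maps $f'_X(x) = -f_X(-x)$; this is a useful elaboration but not a different argument.
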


\begin{proof}
Suppose that $P' = R(P)$ where $R$ is the transformation that we apply to $P$ to get $P'$, and suppose that $S \subseteq [0,n]^2$ is a $P$-free open subset. Then $R(S)$ avoids $P'$ and $R(S)$ is still an open subset. Moreover $R(S)$ can be translated into a subset of $[0,n]^2$. Thus we have $\px(n, P) \le \px(n, P')$. Similarly we obtain $\px(n, P) \ge \px(n, P')$, so $\px(n, P) = \px(n, P')$. 
\end{proof}

Next we make another simple observation about $\px(n, P)$ which does not have an exact analogue for $\ex(n, M)$.

\begin{lem}
Suppose that $P'$ is obtained by translating $P$. Then $\px(n, P) = \px(n, P')$. 
\end{lem}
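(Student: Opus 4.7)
The plan is to show that the containment relation is invariant under translation of the forbidden pattern $P$. Once this is established, the families of $P$-free and $P'$-free open subsets of $[0,n]^2$ coincide, so their suprema of measure are equal.

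First I would write $P' = P + (a,b)$ for some fixed $(a,b) \in \mathbb{R}^2$, so that $X_{P'} = X_P + a$ and $Y_{P'} = Y_P + b$, and $(x,y) \in P$ if and only if $(x+a, y+b) \in P'$. Given any $S \subseteq \mathbb{R}^2$, I would show that $S$ contains $P$ if and only if $S$ contains $P'$. For the forward direction, suppose $f_X \colon X_P \to X_S$ and $f_Y \colon Y_P \to Y_S$ witness containment of $P$ in $S$. Define $g_X \colon X_{P'} \to X_S$ by $g_X(u) = f_X(u - a)$ and $g_Y \colon Y_{P'} \to Y_S$ by $g_Y(v) = f_Y(v - b)$. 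The slope conditions transfer directly, since for $u_0 > u_1$ in $X_{P'}$ we have
\[
\frac{g_X(u_0) - g_X(u_1)}{u_0 - u_1} = \frac{f_X(u_0 - a) - f_X(u_1 - a)}{(u_0-a)-(u_1-a)} \ge 1,
\]
and analogously for $g_Y$. Moreover, if $(u,v) \in P'$ then $(u-a, v-b) \in P$, so $(g_X(u), g_Y(v)) = (f_X(u-a), f_Y(v-b)) \in S$. The reverse direction is symmetric, using the translation by $(-a,-b)$.

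Consequently, an open subset $S \subseteq [0,n]^2$ is $P$-free if and only if it is $P'$-free. Taking the supremum of $\mu_2(S)$ over this common class yields $\px(n, P) = \px(n, P')$.

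There is no real obstacle here; the only subtlety is making sure the bookkeeping with the domain and codomain of the translated functions is correct, and noting that the translation affects only the domains of $f_X, f_Y$ (the points of $P$), not the target subset $S$, so the codomains remain $X_S$ and $Y_S$ throughout.
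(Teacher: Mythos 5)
Your proof is correct and follows essentially the same route as the paper, which simply asserts that by the definition of containment any set containing $P$ contains $P'$ and vice versa; you have just spelled out the witnessing maps $g_X(u)=f_X(u-a)$, $g_Y(v)=f_Y(v-b)$ explicitly. The bookkeeping is right and nothing is missing.
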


\begin{proof}
By definition of containment for subsets of $\mathbb{R}^2$, any subset that contains $P$ will contain $P'$, and any subset that contain $P'$ will contain $P$. Thus $\px(n, P) = \px(n, P')$. 
\end{proof}

It is clear that $\ex(n, M) \le \ex(n, N)$ for any 0-1 matrix $N$ that contains $M$, since any $n \times n$ 0-1 matrix that avoids $M$ will also avoid $N$. We start by observing an analogous fact for $\px(n, P)$.

\begin{lem}\label{pxcontain}
If $P$ and $Q$ are subsets of $\mathbb{R}^2$ for which $Q$ contains $P$, then $\px(n,P) \le \px(n, Q)$.
\end{lem}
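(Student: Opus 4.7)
The plan is to reduce the lemma to transitivity of the containment relation on subsets of $\mathbb{R}^2$: once transitivity is established, any open $P$-free set in $[0,n]^2$ is automatically $Q$-free, so the family of $P$-free open subsets of $[0,n]^2$ is contained in the family of $Q$-free open subsets, and taking suprema of $\mu_2$ gives $\px(n,P) \le \px(n,Q)$.

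First, I would establish the transitivity claim: if $S$ contains $Q$ and $Q$ contains $P$, then $S$ contains $P$. Pick witnessing maps $f_X\colon X_P \to X_Q$, $f_Y\colon Y_P \to Y_Q$ for $Q \supseteq P$ and $g_X\colon X_Q \to X_S$, $g_Y\colon Y_Q \to Y_S$ for $S \supseteq Q$, and propose $g_X \circ f_X$ and $g_Y \circ f_Y$ as the witnessing maps for $S$ containing $P$. Condition (3) is immediate by two applications of (3) for the two given containments. For condition (1), note that for $x_0 > x_1$ in $X_P$ the ratio $(f_X(x_0)-f_X(x_1))/(x_0-x_1) \ge 1$ forces $f_X(x_0) > f_X(x_1)$, so the ratio $(g_X(f_X(x_0)) - g_X(f_X(x_1)))/(f_X(x_0) - f_X(x_1))$ is also well defined and at least $1$; multiplying the two ratios (both positive and both $\ge 1$) gives the required slope condition for the composition. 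Condition (2) is identical with $X$'s replaced by $Y$'s.

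With transitivity in hand, let $S \subseteq [0,n]^2$ be any open $P$-free subset. If $S$ were to contain $Q$, then by transitivity $S$ would contain $P$, contradicting $P$-freeness; hence $S$ is $Q$-free. Therefore every open $P$-free subset of $[0,n]^2$ is an open $Q$-free subset of $[0,n]^2$, which yields
\[
\px(n,P) = \sup \{\mu_2(S) : S \subseteq [0,n]^2 \text{ open, } P\text{-free}\} \le \sup \{\mu_2(S) : S \subseteq [0,n]^2 \text{ open, } Q\text{-free}\} = \px(n,Q).
\]

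The only step that requires genuine care is verifying the slope condition for the composed maps. The subtlety is that the definition demands slope at least $1$ rather than merely positivity, so one must exploit that each factor of the telescoped product of difference quotients is at least $1$ and positive, which I get for free from condition (1) applied to each of the two given containments. Everything else is a direct chase through the definitions.
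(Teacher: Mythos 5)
Your proof is correct and follows the same route as the paper, whose entire proof is the one-line observation that any open $P$-free subset of $[0,n]^2$ is also $Q$-free; you have simply made explicit the transitivity of containment (via composition of the witnessing maps and the telescoping product of difference quotients) that this observation relies on. The verification of the slope condition for the composed maps is exactly right.
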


\begin{proof}
Any open subset of $[0,n]^2$ that avoids $P$ will also avoid $Q$.
\end{proof}

Another immediate fact is that $\px(n, P)$ is non-decreasing in $n$.

\begin{lem}\label{nondec}
For all $0 < m \le n$, we have $\px(m, P) \le \px(n, P)$.
\end{lem}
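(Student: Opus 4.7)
The plan is to show that every open $P$-free subset of $[0,m]^2$ is automatically an open $P$-free subset of $[0,n]^2$, so that the supremum defining $\px(n,P)$ is taken over a larger family than the supremum defining $\px(m,P)$. The key observation is the trivial set inclusion $[0,m]^2 \subseteq [0,n]^2$ whenever $0 < m \le n$, which means that any $S \subseteq [0,m]^2$ is also a subset of $[0,n]^2$.

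Concretely, I would start by letting $S \subseteq [0,m]^2$ be an arbitrary open $P$-free subset. I would then note that $S$, viewed as a subset of $\mathbb{R}^2$, remains open (openness is a property of $S$ as a subset of $\mathbb{R}^2$, independent of whether we consider it inside $[0,m]^2$ or $[0,n]^2$) and remains $P$-free (the containment relation for subsets of $\mathbb{R}^2$ defined in the introduction depends only on $S$ and $P$, not on the ambient box). Since $S \subseteq [0,m]^2 \subseteq [0,n]^2$, $S$ qualifies as one of the open $P$-free subsets of $[0,n]^2$ whose measure is taken in the supremum defining $\px(n,P)$, so $\mu_2(S) \le \px(n,P)$.

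Taking the supremum of $\mu_2(S)$ over all open $P$-free $S \subseteq [0,m]^2$ then yields $\px(m,P) \le \px(n,P)$, completing the proof. There is no substantive obstacle here; the statement is essentially a monotonicity principle for suprema over a nested family of admissible sets, and the only thing to verify is that the admissibility conditions (openness and $P$-freeness) are preserved under enlarging the ambient box, which they obviously are.
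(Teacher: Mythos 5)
Your proof is correct and is exactly the paper's argument: any open $P$-free subset of $[0,m]^2$ is also an open $P$-free subset of $[0,n]^2$, so the supremum defining $\px(n,P)$ is over a larger family. The paper states this in one line; your version just spells out the (routine) checks that openness and $P$-freeness do not depend on the ambient box.
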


\begin{proof}
Any open subset of $[0,m]^2$ that avoids $P$ is also a subset of $[0, n]^2$.
\end{proof}

\subsection{Super-additivity and continuity of $\px(n, P)$}\label{superadd}

In fact, it is possible to prove a much stronger result than the last remark. It is well-known that $ex(n, M)$ is super-additive, i.e., $\ex(m+n, M) \ge \ex(m, M)+\ex(n, M)$ for all $m, n \in \mathbb{Z}^{+}$ \cite{PT}. We prove that $\px(n, P)$ is super-additive for all $P \subseteq \mathbb{R}^2$. 

\begin{lem}
For all $P \subseteq \mathbb{R}^2$, $\px(n, P)$ is super-additive.
\end{lem}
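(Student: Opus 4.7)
\emph{Proof plan.} My plan is to imitate the standard super-additivity argument for $\ex(n,M)$: take near-optimal $P$-free sets on $[0,m]^2$ and $[0,n]^2$ and place them diagonally inside $[0,m+n]^2$. First, fix $\epsilon>0$. By the supremum definition of $\px$, choose open $P$-free subsets $S_1\subseteq[0,m]^2$ and $S_2\subseteq[0,n]^2$ with $\mu_2(S_i)$ within $\epsilon$ of the corresponding value of $\px$. Because each $S_i$ is open in $\mathbb{R}^2$ but contained in a closed square, it actually lies in the open interior of the square, i.e.\ $S_1\subseteq(0,m)^2$ and $S_2\subseteq(0,n)^2$; this strict openness will be essential.

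Next, form the diagonal union
\[
S := S_1 \cup \bigl(S_2+(m,m)\bigr) \subseteq [0,m+n]^2.
\]
The two summands live in the disjoint open sub-squares $(0,m)^2$ and $(m,m+n)^2$, so $S$ is open and $\mu_2(S)=\mu_2(S_1)+\mu_2(S_2)\ge\px(m,P)+\px(n,P)-2\epsilon$. Once $S$ is shown to be $P$-free, we get $\px(m+n,P)\ge\mu_2(S)$, and letting $\epsilon\downarrow 0$ concludes.

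To prove $S$ is $P$-free I would argue by contradiction. Suppose $f_X,f_Y$ witness that $S$ contains $P$. For each $(x,y)\in P$ the image $(f_X(x),f_Y(y))$ lies in $S_1$ or in $S_2+(m,m)$, and openness of these summands forces it into $(0,m)^2$ or $(m,m+n)^2$; in particular no image coordinate equals $m$. Hence for every $(x,y)\in P$, either both of $f_X(x),f_Y(y)$ are $<m$ or both are $>m$. Because the slope-$\ge 1$ condition makes $f_X$ and $f_Y$ strictly increasing, the sets $X_P^1:=\{x:f_X(x)<m\}$ and $X_P^2:=\{x:f_X(x)>m\}$ partition $X_P$ into an initial and a final segment, and analogously $Y_P=Y_P^1\sqcup Y_P^2$; moreover $x\in X_P^i\Leftrightarrow y\in Y_P^i$ whenever $(x,y)\in P$. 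If $X_P^2=\emptyset$ then $f_X,f_Y$ themselves witness that $S_1$ contains $P$, contradicting $P$-freeness of $S_1$; the case $X_P^1=\emptyset$ is symmetric and contradicts $P$-freeness of $S_2$.

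The main obstacle is the remaining case, in which all four of $X_P^1,X_P^2,Y_P^1,Y_P^2$ are nonempty, so that $P$ genuinely splits across the diagonal. The key leverage is that no cross-pattern point $(x,y)\in P$ with $x\in X_P^1$ and $y\in Y_P^2$ (or vice versa) can exist, because its image would be forced into $(0,m)\times(m,m+n)$ or $(m,m+n)\times(0,m)$, regions that meet neither summand of $S$. Turning this forbidden-cross-pattern structure into a clean contradiction for arbitrary $P$---for instance by extending the restrictions $f_X|_{X_P^1}$ and $f_Y|_{Y_P^1}$ to all of $X_P,Y_P$ with images inside $X_{S_1},Y_{S_1}$, using the slope buffer provided by the strict interior inclusion $S_1\subseteq(0,m)^2$, thereby exhibiting a containment of $P$ inside $S_1$ itself---is the delicate step on which the whole proof hinges.
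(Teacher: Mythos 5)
You have not completed the proof, and the gap you flag (``the delicate step on which the whole proof hinges'') is not merely delicate --- with your fixed placement it is unfixable. Your plan always puts $S_1$ in the bottom-left and $S_2$ in the top-right, so a point of $S_2+(m,m)$ always lies up \emph{and to the right} of every point of $S_1$. Take $P=\{(0,0),(1,1)\}$. Then $P$-free sets of near-maximal measure can be, e.g., anti-diagonal bands $\{(x,y):|x+y-m|<1\}\cap(0,m)^2$ (or even horizontal strips $(0,m)\times(0,1)$), and the diagonal union of two such sets \emph{does} contain $P$: any $p_1\in S_1$ and $p_2\in S_2+(m,m)$ with both coordinates of $p_2-p_1$ at least $1$ give a copy, and such pairs exist. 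So the set $S$ you build is not $P$-free, and the lemma you hope to prove in the remaining case (``a split copy of $P$ forces a copy inside $S_1$ alone'') is false. The slope-extension idea cannot rescue it, because the obstruction is geometric, not technical.

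What is missing is a choice of placement that depends on $P$. The paper first reduces, by reflection, to the case where for every $\epsilon>0$ there are rows $u$ near the top and $r$ near the bottom of $P$ containing points $p\in u$, $p'\in r$ with $p$ \emph{not} to the left of $p'$; it then places one near-optimal set in the \emph{top-left} and the other in the \emph{bottom-right} of $[0,m+n]^2$. With that (anti-diagonal) placement, a copy of $P$ split across the two pieces would force every point of $P$ in a row near $s_P$ to map left of every point in a row near $i_P$, contradicting the normalization. Your own analysis of the split case (the partition of $X_P$ and $Y_P$ into initial and final segments, and the absence of cross-pattern points) is correct as far as it goes, but it only tells you that a split copy respects the diagonal order of your placement --- which for sets like $P=\{(0,0),(1,1)\}$ is perfectly consistent with containment. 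You need to orient the two sub-squares against the order in which $P$ itself crosses from its top rows to its bottom rows; that is the content of the paper's without-loss-of-generality step, and without it the argument does not close.
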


\begin{proof}
If $P$ is unbounded, then the result is immediate since $\px(n, P) = n^2$ for all $n > 0$, so suppose that $P$ is bounded. If $P$ has a single point, then $\px(n, P) = 0$, so suppose that $P$ has multiple points. Without loss of generality, suppose that $P$ has multiple rows.

Let $s_P$ be the supremum of the $y$-coordinates of the rows of $P$, and let $i_P$ be the infimum of the $y$-coordinates of the rows of $P$, which both must exist because $P$ is bounded. Without loss of generality, suppose for all $\epsilon > 0$ that there exist rows $u$ and $r$ of $P$ with $s_P - u < \epsilon$ and $r - i_P < \epsilon$ such that there is some point $p$ in row $u$ that is \emph{not} to the left of some point $p'$ in row $r$. Note that if this supposition was false, then there would exist $\epsilon > 0$ such that for all rows $u$ and $r$ of $P$ with $s_P - u < \epsilon$ and $r - i_P < \epsilon$, every point $p$ in row $u$ is to the left of every point $p'$ in row $r$. Thus if the supposition was false, then for all $\epsilon > 0$ there would exist rows $u$ and $r$ of $P$ with $s_P - u < \epsilon$ and $r - i_P < \epsilon$ such that there is some point $p$ in row $u$ that is \emph{not} to the right of some point $p'$ in row $r$. The only logical difference between this and the supposition is the word \emph{right} replacing the word \emph{left}, so we can assume without loss of generality that for all $\epsilon > 0$ there exist rows $u$ and $r$ of $P$ with $s_P - u < \epsilon$ and $r - i_P < \epsilon$ such that there is some point $p$ in row $u$ that is \emph{not} to the left of some point $p'$ in row $r$. 

Let $S_t \subseteq [0, n]^2$ for $t \in \mathbb{N}$ be a sequence of $P$-free open subsets with the property that $\lim_{t \rightarrow \infty} \mu_2(S_t) = \px(n, P)$. Let  $R_t \subseteq [0, m]^2$ for $t \in \mathbb{N}$ be a sequence of $P$-free open subsets with the property that $\lim_{t \rightarrow \infty} \mu_2(R_t) = \px(m, P)$. For each $t \in \mathbb{N}$, let $Q_t$ be the point set obtained from placing a copy of $S_t$ in the top left corner of $[0, m+n]^2$ and placing a copy of $R_t$ in the bottom right corner of $[0, m+n]^2$. Clearly $Q_t$ is open since it is a union of open sets. Since $S_t$ is an open subset of $[0, n]^2$, it contains no points on the boundary of $[0,n]^2$. Similarly since $R_t$ is an open subset of $[0,m]^2$, it contains no points on the boundary of $[0, m]^2$. Thus the copy of $S_t$ in $Q_t$ is fully above and fully to the left of the copy of $R_t$ in $Q_t$.

Note that $Q_t$ avoids $P$. Indeed, suppose for contradiction that $Q_t$ contained $P$. The copy of $P$ in $Q_t$ cannot be fully contained in the copy of $S_t$ in $Q_t$ or the copy of $R_t$ in $Q_t$, since they both avoid $P$, so parts of the copy of $P$ must be in both $S_t$ and $R_t$. In particular, there exists $\epsilon > 0$ such that any rows $u$ in the copy of $P$ in $Q_t$ with $s_P - u < \epsilon$ must be contained in $S_t$, and any rows $r$ in the copy of $P$ in $Q_t$ with $r - i_P < \epsilon$ must be contained in $R_t$. Thus any points in $P$ in a row $u$ with $s_P - u < \epsilon$ must be to the left of any points in $P$ in a row $r$ with $r - i_P < \epsilon$. This contradicts the fact that for all $\epsilon > 0$, there exist rows $u$ and $r$ of $P$ with $s_P - u < \epsilon$ and $r - i_P < \epsilon$ such that there is some point $p$ in row $u$ that is \emph{not} to the left of some point $p'$ in row $r$. Thus $Q_t$ avoids $P$. So $\px(m+n, P) \ge \lim _{t \rightarrow \infty} \mu_2(Q_t) = \lim_{t \rightarrow \infty} \mu_2(R_t)+\lim_{t \rightarrow \infty} \mu_2(S_t) = \px(m, P)+\px(n, P)$.
\end{proof}

Next we show that $\px(n, P)$ is continuous in $n$ for $n > 0$. In order to prove this, we start with a simple lemma.

\begin{lem}\label{sandwich}
For all $n > 0$ and all $\epsilon$ with $0 < \epsilon < n$, we have $\px(n, P) \le \px(n+\epsilon, P) < \px(n, P)+3n \epsilon$.
\end{lem}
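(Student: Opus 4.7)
The plan is to handle the two inequalities separately, with the left one being an immediate application of a previously established monotonicity result and the right one reducing to a simple area decomposition.

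For the lower bound $\px(n, P) \le \px(n+\epsilon, P)$, I would simply invoke Lemma \ref{nondec} applied to $m = n$ and the larger value $n+\epsilon$.

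For the strict upper bound, the idea is to take an arbitrary open $P$-free subset $S \subseteq [0, n+\epsilon]^2$ and split it along the boundary of the smaller square. Specifically, write $S = (S \cap (0,n)^2) \cup (S \setminus (0,n)^2)$, where the first piece lives in the smaller square and the second piece lives in an L-shaped annulus. Because $(0,n)^2$ is open, the set $S \cap (0,n)^2$ is open; because it is contained in $S$, it is still $P$-free (any witnessing pair of functions $f_X, f_Y$ for $P$ in $S \cap (0,n)^2$ would also witness $P$ in $S$); and because $(0,n)^2 \subseteq [0,n]^2$, its measure is bounded by $\px(n, P)$. The second piece satisfies
\[
\mu_2(S \setminus (0,n)^2) \le \mu_2\!\left([0,n+\epsilon]^2 \setminus (0,n)^2\right) = (n+\epsilon)^2 - n^2 = 2n\epsilon + \epsilon^2.
\]
Adding these and using $\epsilon < n$ to bound $\epsilon^2 < n\epsilon$ gives $\mu_2(S) < \px(n, P) + 3n\epsilon$, and taking the supremum over all such $S$ yields the desired strict inequality.

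There is essentially no obstacle here beyond one bookkeeping point: one must be careful to intersect with the \emph{open} square $(0,n)^2$ rather than the closed square $[0,n]^2$, in order to keep the restricted set open (and thus eligible for the supremum defining $\px(n, P)$); using $(0,n)^2$ costs nothing since the boundary of the closed square has Lebesgue measure zero. Once this is noted, the estimate is purely a matter of comparing the areas of two nested squares, and the factor of $3$ in the conclusion comes precisely from absorbing the quadratic term $\epsilon^2$ into the linear term $n\epsilon$ under the hypothesis $\epsilon < n$.
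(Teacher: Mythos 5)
Your proposal is correct and follows essentially the same route as the paper: the first inequality via Lemma \ref{nondec}, and the second by intersecting an arbitrary open $P$-free $S \subseteq [0,n+\epsilon]^2$ with the open square $(0,n)^2$ and bounding the discarded region by $2n\epsilon + \epsilon^2 < 3n\epsilon$. The uniform bound $\px(n,P)+2n\epsilon+\epsilon^2$ on all such $S$ is what lets the strict inequality survive the supremum, exactly as in the paper.
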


\begin{proof}
The first inequality is immediate by Lemma \ref{nondec}. For the second inequality, observe that any open $P$-free subset $S \subseteq [0, n+\epsilon]^2$ can be transformed into an open $P$-free subset $S' \subseteq [0, n]^2$ by letting $S'$ be the intersection of the open square $(0, n)^2$ with $S$. Then $\mu_2(S') \ge \mu_2(S)-2\epsilon n-\epsilon^2$, so $\mu_2(S) \le \px(n, P)+2n \epsilon+\epsilon^2 < \px(n, P)+ 3n \epsilon$, which implies that $\px(n+\epsilon, P) < \px(n, P)+3n \epsilon$.
\end{proof}

\begin{prop}
The function $\px(n, P)$ is continuous in $n$ for $n > 0$.
\end{prop}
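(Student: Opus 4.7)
The plan is to use Lemma \ref{sandwich} essentially as a two-sided Lipschitz-type estimate around each $n > 0$, which immediately yields continuity. Fix $n > 0$ and $\delta > 0$; the goal is to produce some $\eta > 0$ such that $|\px(n', P) - \px(n, P)| < \delta$ whenever $|n' - n| < \eta$ and $n' > 0$.

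First I would handle the right-hand side. Lemma \ref{sandwich} applied at $n$ with increment $\epsilon \in (0, n)$ directly gives
\[
0 \le \px(n+\epsilon, P) - \px(n, P) < 3 n \epsilon.
\]
So taking $\epsilon < \delta/(3n)$ makes the right-hand difference less than $\delta$. For the left-hand side, I would apply the same lemma at the point $n-\epsilon$ with the same increment $\epsilon$, provided $\epsilon < n/2$ so that $n - \epsilon > 0$ and $0 < \epsilon < n - \epsilon$. This yields
\[
0 \le \px(n, P) - \px(n-\epsilon, P) < 3(n-\epsilon)\epsilon < 3 n \epsilon,
\]
again less than $\delta$ once $\epsilon < \delta/(3n)$.

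Combining the two bounds, setting $\eta = \min\!\bigl(n/2,\, \delta/(3n+1)\bigr)$ guarantees that for all $n' > 0$ with $|n' - n| < \eta$ we have $|\px(n', P) - \px(n, P)| < \delta$. This establishes continuity at every $n > 0$.

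There is essentially no hard step here; Lemma \ref{sandwich} was stated as a one-sided bound, and the only thing to watch is that in order to get left-continuity one must re-apply it at the shifted base point $n - \epsilon$ rather than at $n$, which requires $\epsilon < n$ (hence the $n/2$ in the definition of $\eta$). Once this is noticed, the argument is just an $\epsilon$-$\delta$ unpacking of the Lipschitz-type estimate already proved.
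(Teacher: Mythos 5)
Your proposal is correct and follows essentially the same route as the paper: both use Lemma \ref{sandwich} once at the base point for the upper estimate and once at a shifted base point ($n-\epsilon$, resp. $n_0-\delta$) for the lower estimate, with the same care that the shifted application stays within the lemma's hypothesis $0<\epsilon<n$. No meaningful difference.
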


\begin{proof}
Fix $n_0 > 0$ and $\epsilon$ with $0 < \epsilon < n_0$. By Lemma \ref{sandwich}, for $\delta = \min(\frac{n_0}{3},\frac{\epsilon}{3n_0})$ and $n < n_0+\delta$ we have $\px(n, P) \le \px(n_0+\delta, P) < \px(n_0, P)+\epsilon$. Moreover if $n > n_0 - \delta$, then we have $\px(n, P) \ge \px(n_0 - \delta, P) > \px(n_0, P)-3(n_0-\delta) \delta > \px(n_0, P)-\epsilon$. Thus we have shown that for all $n_0 > 0$ and for all $\epsilon > 0$ there exists $\delta > 0$ for which $|\px(n, P)-\px(n_0, P)| < \epsilon$ for all $n$ such that $|n-n_0| < \delta$.
\end{proof}

\subsection{Operations for $\px(n, P)$}\label{pxops}

F\"uredi and Hajnal \cite{FH} proved that if $M$ is a 0-1 matrix with a one in the rightmost column in row $r$, and $M'$ is obtained from $M$ by adding a new column on the right with a single one in row $r$, then $\ex(n, M') \le \ex(n, M)+n$. We prove a similar type of result for $\px(n, P)$. Before we prove this result, we prove a lemma that we will use a few times throughout the paper.

\begin{lem}\label{usefullem}
Let $P$ be a horizontal line segment of length $c$ with closed endpoints, and suppose that $S \subseteq [0, n]^2$ is open and $P$-free. For each $y \in [0, n]$, let $S_y$ denote the set of points $(a, b) \in S$ such that $b = y$. Then $\mu_1(S_y) \le c$ for all $y \in [0, n]$. 
\end{lem}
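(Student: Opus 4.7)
The plan is to prove the contrapositive: assuming $\mu_1(S_y) > c$ for some $y \in [0, n]$, I will exhibit a copy of $P$ inside $S$, contradicting $P$-freeness. After translating (using the translation-invariance established earlier), I may take $P = [0,c] \times \{0\}$, so $X_P = [0,c]$ and $Y_P = \{0\}$. Under the paper's definition of containment, I therefore need to produce a $1$-expanding function $f_X : [0,c] \to X_S$ together with the forced choice $f_Y(0) = y$, so that $(f_X(x), y) \in S$ for every $x \in [0,c]$. Equivalently, I want a $1$-expanding $f_X : [0,c] \to S_y$, and crucially $f_X$ is not required to be continuous.

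Since $S$ is open in $\mathbb{R}^2$, the slice $S_y = \{x : (x,y) \in S\}$ is open in $\mathbb{R}$ and hence a countable disjoint union of open intervals. Because $\mu_1(S_y) > c$, I can choose finitely many of these components whose total length still exceeds $c$; after sorting by position these become $(a_1, b_1), \dots, (a_K, b_K)$ with $b_i \le a_{i+1}$ and $L_K := \sum_{i=1}^K (b_i - a_i) > c$. Pick $\delta > 0$ with $L_K - 2K\delta > c$ and write $L_k = \sum_{i=1}^k (b_i - a_i)$. I define $f_X$ on $[0, L_K - 2K\delta]$ piecewise: partition the domain into $K$ consecutive subintervals of lengths $(b_i - a_i) - 2\delta$, and on the $i$-th subinterval let $f_X$ be the slope-$1$ translation mapping it onto $[a_i + \delta,\, b_i - \delta] \subset (a_i, b_i) \subseteq S_y$. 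Restricting $f_X$ to $[0, c]$ then gives a function whose image lies in $S_y$.

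The $1$-expansion inequality splits into two cases. Within a single piece, $f_X$ is a translation, so the inequality holds with equality. For $x_1 < x_0$ lying in distinct pieces $i < j$, a direct computation from the piecewise formula reduces $f_X(x_0) - f_X(x_1) \ge x_0 - x_1$ to $a_j - a_i + 2(j-i)\delta \ge \sum_{k=i}^{j-1}(b_k - a_k)$, which follows from the telescoping identity $a_j - a_i = \sum_{k=i}^{j-1}(a_{k+1} - a_k)$ combined with the disjointness bounds $a_{k+1} \ge b_k$. Pairing the resulting $f_X$ with $f_Y(0) = y$ then witnesses $S$ containing $P$, which is the desired contradiction. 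The main obstacle is that because $S_y$ is open, even when $\mu_1(S_y)$ only slightly exceeds $c$ the slice may contain no closed sub-interval of length $c$, so a straight translation alone is insufficient. The trick that unlocks the proof is that $f_X$ is allowed to jump between components of $S_y$, and every such jump skips a non-negative gap in image space with no cost in the domain, so it never violates the $1$-expansion inequality; everything else is bookkeeping with $\delta$.
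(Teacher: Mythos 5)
Your proposal is correct and takes essentially the same route as the paper: approximate the open slice $S_y$ from inside by finitely many closed subintervals of its components whose total length still exceeds $c$, and conclude that this union contains the segment; your explicit piecewise slope-one map and the verification that jumps between components only add nonnegative gaps is exactly the containment step the paper asserts without writing out. One tiny bookkeeping point: you should also require $\delta < \min_i (b_i-a_i)/2$ so that every piece of the partition has positive length, which is possible since only finitely many components are used.
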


\begin{proof}
Suppose for contradiction that there exists $y \in [0, n]$ such that $\mu_1(S_y) > c$. Since $S$ is open, we have $0 < y < n$ and $S_y$ is a countable disjoint union of intervals $I_1, I_2, \dots$ with open endpoints. Then $\mu_1(S_y) = \sum_{j \ge 1} |I_j|$. Thus for all $\epsilon > 0$, there exists $N_{\epsilon}$ such that $\sum_{j = 1}^{N_{\epsilon}} |I_j| > \mu_1(S_y)-\epsilon$.

Since we are supposing that $\mu_1(S_y) > c$, we must have $\mu_1(S_y) = c+q$ for some $q > 0$. Let $\epsilon = \frac{q}{2}$. Let $N_{\epsilon}$ be sufficiently large so that $\sum_{j = 1}^{N_{\epsilon}} |I_j| > \mu_1(S_y)-\epsilon$. In each open interval $I_j$ for $j = 1, \dots, N_{\epsilon}$, we take a closed interval $C_j$ of length $\max(0,|I_j|-\frac{\epsilon}{2^j})$. Then $\sum_{j = 1}^{N_{\epsilon}} |C_j| = \sum_{j = 1}^{N_{\epsilon}} \max(0,|I_j|-\frac{\epsilon}{2^j}) >  (\mu_1(S_y)-\epsilon)-\epsilon = \mu_1(S_y)-q = c$. Thus the disjoint union of the $C_j$ for $j = 1, \dots, N_{\epsilon}$ must contain $P$, so $S$ contains $P$, which gives a contradiction. Thus $\mu_1(S_y) \le c$ for all $y \in [0, n]$.
\end{proof}

Now we are ready to strengthen the result of F\"uredi and Hajnal from \cite{FH}.

\begin{lem}\label{addedseg}
If $P \subseteq \mathbb{R}^2$ has a rightmost column, and $P'$ is obtained from $P$ by adding a horizontal segment of length $c$ to $P$ with its left endpoint on a point in the rightmost column of $P$, then $\px(n, P') \le \px(n, P)+c n$.
\end{lem}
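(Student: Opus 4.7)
The plan is to show that for any open $P'$-free $S \subseteq [0, n]^2$, there is an open $P$-free subset $S' \subseteq S$ with $\mu_2(S \setminus S') \le cn$; since $\mu_2(S') \le \px(n, P)$, this yields $\mu_2(S) \le \px(n, P) + cn$, and taking the supremum over $S$ gives the claim. Intuitively, $S'$ should be obtained by trimming from each horizontal slice of $S$ a ``rightmost'' portion of length at most $c$, which is exactly the amount of room that would be needed on the right to accommodate the added segment $L$.

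Concretely, I would set $S' = \{(x, y) \in S : \mu_1(S_y \cap (x, n)) > c\}$, where $S_y = \{x : (x, y) \in S\}$. Two routine points come first. Openness of $S'$: at any $(x_0, y_0) \in S'$, inner regularity gives a compact $K \subseteq S_{y_0} \cap (x_0, n)$ with $\mu_1(K) > c$, and a standard compactness argument using the openness of $S$ thickens $K$ to a slab $K \times (y_0 - \delta, y_0 + \delta) \subseteq S$, from which a small open box around $(x_0, y_0)$ lies in $S'$. Measure bound: for each $y$, the map $x \mapsto \mu_1(S_y \cap (x, n))$ is $1$-Lipschitz and non-increasing, so the ``bad tail'' $\{x \in S_y : \mu_1(S_y \cap (x, n)) \le c\}$ has $\mu_1$-measure at most $c$; Fubini then gives $\mu_2(S \setminus S') \le cn$.

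The substantive step is to show $S'$ is $P$-free, by contradiction. Suppose $(f_X, f_Y)$ witnesses $P \subseteq S'$. Let $p^* = (x^*, y^*)$ be the point in the rightmost column of $P$ to which $L$ is attached, and set $(x_0, y_0) = (f_X(x^*), f_Y(y^*))$; by construction of $S'$, $\mu_1(S_{y_0} \cap (x_0, n)) > c$. I plan to extend $(f_X, f_Y)$ to a containment of $P' = P \cup L$ in $S$. Since $y^* \in Y_P$, $f_Y$ needs no extension, and for $f_X$ I use the concatenation trick from the proof of Lemma \ref{usefullem}: pick closed subintervals $C_1, \dots, C_N \subset S_{y_0} \cap (x_0, n)$ with $\sum |C_i| > c$, and define $f_X$ on $(x^*, x^* + c]$ by mapping consecutive chunks of $(x^*, x^* + c]$ onto the $C_i$ with slope $1$. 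Because the $C_i$ lie in pairwise disjoint open components of $S_{y_0}$, the inequality $\min(C_{k+1}) \ge \min(C_k) + |C_k|$ telescopes to simultaneously give the internal slope-$\ge 1$ condition on $(x^*, x^* + c]$ and the slope-$\ge 1$ condition from the fixed value $f_X(x^*) = x_0$; compatibility with $f_X$ on the points of $X_P$ strictly to the left of $x^*$ is then automatic from the original slope condition.

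The main obstacle is the coordinated verification of all slope conditions for the extended $f_X$ at once --- internal, at the junction with $x^*$, and against old coordinates in $X_P$ --- so the gap bookkeeping between consecutive $C_i$ has to be organized carefully. The openness of $S'$ is routine but essential, since it is what legitimizes bounding $\mu_2(S')$ by $\px(n, P)$.
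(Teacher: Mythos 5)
Your proposal is correct and follows essentially the same route as the paper: trim from each horizontal slice of the $P'$-free set the tail of points with at most $c$ measure of $S$ to their right, verify the trimmed set is open (via compactness) and $P$-free, and bound the loss by $cn$ using Fubini. Your explicit extension of $f_X$ onto $(x^*, x^*+c]$ via the disjoint closed intervals $C_i$ correctly fills in the step the paper disposes of by citing Lemma \ref{usefullem}, and your openness argument is a streamlined version of the paper's.
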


\begin{proof}
Let $S'_t \subseteq [0, n]^2$ for $t \in \mathbb{N}$ be a sequence of $P'$-free open subsets with the property that $\lim_{t \rightarrow \infty} \mu_2(S'_t) = \px(n, P')$. For each point $p$ in row $r$ of $S'_t$, let $Z(t, p)$ be the set of points in row $r$ of $S'_t$ to the right of $p$. For each $t$, let $S_t$ be obtained from $S'_t$ by removing from each row $r$ of $S'_t$ any points $p$ such that $\mu_1(Z(t, p)) \le c$. In other words, $S_t$ only includes the points $p$ from $S'_t$ for which $\mu_1(Z(t, p)) > c$. Then $S_t$ must avoid $P$, or else $S'_t$ would contain $P'$ by Lemma \ref{usefullem}. 

Also $S_t$ is open. To see this, fix any $p \in S_t$ from some row $r$ and column $g$ of $S'_t$. By definition of $S_t$, $\mu_1(Z(t, p)) > c$, so $\mu_1(Z(t, p)) = c+q$ for some $q > 0$. Since $S'_t$ is open, we can write $Z(t, p)$ as a countable union of disjoint row-$r$ intervals $I_j$ for $j \ge 1$ (which are open in the restriction to row $r$) such that $\sum_{j} |I_j| = c+q$. In each interval $I_j$, we choose a closed interval $C_j \subseteq I_j$ such that $\sum_j |C_j| \ge c+\frac{q}{2}$. Note that we can do this, e.g., by choosing $|C_j| = \max(0, |I_j|-\frac{q}{2^{j+1}})$ for each $j \ge 1$.

Since $S'_t$ is open, around every point $p' \in S'_t$ there is an open ball $B(p', \nu_{p'}) \subseteq S'_t$ of radius $\nu_{p'}$ centered at $p'$ for some $\nu_{p'} > 0$. For each closed interval $C_j$, $\cup_{p' \in C_j} B(p', \nu_{p'})$ is an open cover of $C_j$. Since $C_j$ is closed and bounded, it is compact, so the open cover $\cup_{p' \in C_j} B(p', \nu_{p'})$ has a finite subcover $\cup_{\pi \in K} B(\pi, \nu_{\pi})$ for some finite subset $K \subseteq C_j$. Since $C_j$ is covered by a finite union of open balls $\cup_{\pi \in K} B(\pi, \nu_{\pi})$, there exists $h_j$ such that $B(p', h_j) \subseteq S'_t$ for all $p' \in C_j$.

Since $\sum_j |C_j| \ge c+\frac{q}{2}$, there exists a minimum $N$ such that $\sum_{j = 1}^{N} |C_j| > c+\frac{q}{4}$. Let $H = \min(\left\{h_j : 1 \leq j \leq N \right\})$. Then for every point $p' \in C_j$ for each $j = 1, \dots, N$, we have $B(p',H) \subseteq S'_t$. 

Since there is an open ball $B(p, \nu_p) \subseteq S'_t$, all points $\tau$ in the same column $g$ as $p$ in $S'_t$ with $\dist(\tau, p) \le \frac{\nu_p}{2}$ must satisfy $\tau \in S'_t$. Moreover since the open ball $B(p',H)$ is a subset of $S'_t$ for every point $p' \in C_j$ for each $j$ between $1$ and $N$ inclusive, all points $\tau$ in column $g$ in $S'_t$ with $\dist(\tau, p) \le \min(\frac{\nu_p}{2}, \frac{H}{2})$ must be in $S_t$. This is because if $\tau$ is in row $r'$ and column $g$ of $S'_t$ with $|r-r'| \le \min(\frac{\nu_p}{2}, \frac{H}{2})$, then $\mu_1(Z(t, \tau)) \ge \sum_{j = 1}^{N} |C_j| > c+\frac{q}{4}$.

Let $R = \min(\frac{\nu_p}{2}, \frac{H}{2},\frac{q}{4})$. Then the open ball $B(p, R)$ is a subset of $S'_t$. The points in the intersection of $g$ with $B(p, R)$ must be in $S_t$, as explained in the previous paragraph. Thus all points in $B(p, R)$ to the left of column $g$ must also be in $S_t$, by definition of $S_t$. All points $\tau$ in the intersection of $g$ with $B(p, R)$ must be in some row $r'$ with $|r-r'| < R$ and satisfy $\mu_1(Z(t, \tau)) > c+\frac{q}{4}$. Thus, all points $\tau'$ in $B(p, R)$ to the right of column $g$ in row $r'$ must satisfy $\mu_1(Z(t, \tau')) > c$ by definition of $R$. So all points in $B(p, R)$ to the right of column $g$ must also be in $S_t$, by definition of $S_t$. We showed that $B(p, R) \subseteq S_t$, so $S_t$ is open.

Let $A = S'_t - S_t$, and let $A_y$ denote the set of elements of $A$ in row $y$. By Fubini's theorem and the definition of $S_t$, $\mu_2(A) = \int_{y \in [0, n]} \mu_1(A_y) d\mu_1(y) \le c n$. Thus $\mu_2(S_t) \ge \mu_2(S'_t)- c n$, so $\lim_{t \rightarrow \infty} \mu_2(S_t) \ge \px(n, P') - c n$, which implies that $\px(n, P') \le \px(n, P)+c n$.
\end{proof}

Combining Lemmas \ref{pxcontain} and \ref{addedseg}, we obtain the following corollary about adding a new point to the right of a set of points in the plane.

\begin{cor}\label{addedpt}
If $P$ is a subset of $\mathbb{R}^2$ with a rightmost column, and $P'$ is obtained from $P$ by adding a new point to $P$ that is $c$ to the right of a point $p$ in the rightmost column of $P$, then $\px(n, P') \le \px(n, P)+c n$.
\end{cor}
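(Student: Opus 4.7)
The plan is to interpolate the set $P'$ through an intermediate set $P''$ consisting of $P$ plus a full horizontal segment (rather than just an endpoint), which lets us apply the two previous lemmas in sequence.

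First I would define $P''$ to be the set obtained from $P$ by adding a horizontal segment of length $c$ whose left endpoint is the point $p$ in the rightmost column of $P$. Note that $P''$ still has a rightmost column (a vertical line through the right endpoint of the new segment), and $P''$ is precisely of the form required for Lemma \ref{addedseg}. Therefore $\px(n, P'') \le \px(n, P) + c n$.

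Next I would observe that $P'' \supseteq P'$ as subsets of $\mathbb{R}^2$. Indeed, the new point added to form $P'$ is at distance exactly $c$ to the right of $p$, which is the right endpoint of the horizontal segment added to form $P''$; all other points of $P'$ already lie in $P \subseteq P''$. Since $P' \subseteq P''$, the identity functions on $X_{P'}$ and $Y_{P'}$ trivially satisfy the three conditions in the definition of containment (they have slope $1 \ge 1$ on both axes and map each point of $P'$ to itself, which lies in $P''$). Hence $P''$ contains $P'$, and by Lemma \ref{pxcontain} we conclude $\px(n, P') \le \px(n, P'')$.

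Chaining the two inequalities gives $\px(n, P') \le \px(n, P'') \le \px(n, P) + c n$, which is exactly the desired bound. There is no real obstacle here; the only point requiring any care is verifying that adding a single point to the right of $p$ is dominated (in the containment order) by adding the full horizontal segment from $p$ of length $c$, so that the quantitative bound of Lemma \ref{addedseg} transfers through Lemma \ref{pxcontain}.
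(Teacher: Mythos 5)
Your proof is correct and follows essentially the same route as the paper: introduce $P''$ by adding the full segment of length $c$ at $p$, note that $P''$ contains $P'$ (here simply because $P' \subseteq P''$), and chain Lemma \ref{pxcontain} with Lemma \ref{addedseg}. No issues.
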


\begin{proof}
Let $P''$ be obtained from $P$ by adding a horizontal segment of length $c$ to $P$ with its left endpoint on point $p$ in the rightmost column of $P$. Then $P''$ contains $P'$, so $\px(n, P') \le \px(n, P'')$ by Lemma \ref{pxcontain}. Thus by Lemma \ref{addedseg}, we have $\px(n, P') \le \px(n, P'') \le \px(n, P) + c n$.
\end{proof}

Both of the last two results are sharp. For example, $P$ could be a single point. $P'$ would be a horizontal segment of length $c$ in Lemma \ref{addedseg} or a pair of points in the same row at a distance of $c$ in Corollary \ref{addedpt}. We discuss horizontal segments more in Section \ref{infinite_p}.

If $M$ is a 0-1 matrix, let $S(M, k)$ be the 0-1 matrix obtained from $M$ by inserting $k$ rows of zeroes between every consecutive pair of rows of $M$ and $k$ columns of zeroes between every consecutive pair of columns of $M$. Tardos proved that $\ex(n, S(M, k)) = O(ex(n, M)+n)$ for all 0-1 matrices $M$ \cite{tardos05}, where the constant in the bound depends on $k$. Using the same proof, we show a stronger but more restricted bound that we use in our proof of the main result in Section \ref{mainresult}.

\begin{lem} \label{tardos_blank}
For all finite subsets $P \subseteq \mathbb{R}^2$, $\ex(n, S(M_P,k)) \le (k+1)^2 \ex(\lceil \frac{n}{k+1} \rceil, M_P)$.
\end{lem}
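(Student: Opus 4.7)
The plan is to adapt Tardos's partitioning argument from \cite{tardos05}. Take an $n \times n$ 0-1 matrix $A$ that avoids $S(M_P, k)$ and attains $\ex(n, S(M_P, k))$ ones. For each pair $(a,b) \in \{1, \dots, k+1\}^2$, let $A_{a,b}$ be the submatrix of $A$ indexed by rows with indices $\equiv a \pmod{k+1}$ and columns with indices $\equiv b \pmod{k+1}$. Each $A_{a,b}$ has at most $\lceil n/(k+1) \rceil$ rows and at most $\lceil n/(k+1) \rceil$ columns, and the $(k+1)^2$ submatrices partition the entries of $A$ (each entry of $A$ lies in exactly one $A_{a,b}$).

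The main step is to show that every $A_{a,b}$ must avoid $M_P$. Suppose for contradiction that some $A_{a,b}$ contains $M_P$ via rows $i'_1 < \dots < i'_R$ and columns $j'_1 < \dots < j'_C$ of $A_{a,b}$, where $R$ and $C$ are the row and column counts of $M_P$. In $A$ these correspond to rows $i_\ell = a + (i'_\ell - 1)(k+1)$ and columns $j_m = b + (j'_m - 1)(k+1)$, so consecutive $i_\ell$ differ by at least $k+1$, and likewise for the $j_m$. Hence there are at least $k$ rows of $A$ strictly between $i_\ell$ and $i_{\ell+1}$, and at least $k$ columns of $A$ strictly between $j_m$ and $j_{m+1}$. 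Pick any $k$ such intermediate rows between each consecutive pair $i_\ell, i_{\ell+1}$, and any $k$ such intermediate columns between each consecutive pair $j_m, j_{m+1}$, to obtain a submatrix of $A$ of the exact dimensions of $S(M_P, k)$. At the positions designated as $M_P$-entries of $S(M_P, k)$, this submatrix already has the required ones (since $A_{a,b}$ contained $M_P$ at $(i'_\ell, j'_m)$); all other entries can be turned to zeros. So $A$ would contain $S(M_P, k)$, contradicting the choice of $A$.

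Once the key step is in hand, each $A_{a,b}$ is an $M_P$-free matrix with at most $\lceil n/(k+1) \rceil$ rows and columns, and therefore has at most $\ex(\lceil n/(k+1) \rceil, M_P)$ ones (using the analogue of Lemma \ref{nondec} for $\ex$). Summing over all $(k+1)^2$ choices of $(a,b)$ gives the stated inequality. The only real obstacle is carefully verifying the embedding — confirming that the residue-class spacing of $k+1$ leaves enough room to insert the required $k$ blank rows and $k$ blank columns of $S(M_P, k)$ — which is a bookkeeping exercise rather than a genuine difficulty.
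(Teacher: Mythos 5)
Your proposal is correct and follows essentially the same approach as the paper: partition the entries of $A$ into $(k+1)^2$ submatrices by residue classes of row and column indices mod $k+1$, show each must avoid $M_P$ (else the $\ge k$ intermediate rows and columns between consecutive selected indices yield a copy of $S(M_P,k)$ in $A$), and sum. The paper pads $A$ with zero rows and columns so each submatrix is exactly $\lceil \frac{n}{k+1}\rceil \times \lceil \frac{n}{k+1}\rceil$, whereas you invoke monotonicity of $\ex$; both are fine.
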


\begin{proof}
Suppose that $A$ is an $n \times n$ 0-1 matrix with $\ex(n, S(M_P,k))$ ones that avoids $S(M_P, k)$. Let $A'$ be the $(k+1)\lceil \frac{n}{k+1} \rceil \times (k+1) \lceil \frac{n}{k+1} \rceil$ 0-1 matrix obtained from $A$ by adding at most $k$ rows of zeroes to the bottom of $A$ and at most $k$ columns of zeroes to the right side of $A$. Note that $A'$ must still avoid $S(M_P, k)$, since $A$ avoids $S(M_P, k)$. For each $1 \le x \le k+1$ and $1 \le y \le k+1$, let $A_{x,y}$ be the $\lceil \frac{n}{k+1} \rceil \times \lceil \frac{n}{k+1} \rceil$ 0-1 matrix obtained from $A'$ by taking the submatrix of rows $r$ for which $(k+1)$ divides $ (r-x)$ and columns $s$ for which $(k+1)$ divides $ (s-y)$. For all $x$ and $y$, $A_{x, y}$ must avoid $M_P$ or else $A$ would contain $S(M_P, k)$. Thus each 0-1 matrix $A_{x, y}$ has at most $\ex(\lceil \frac{n}{k+1} \rceil, M_P)$ ones, so $A$ has at most $(k+1)^2 \ex(\lceil \frac{n}{k+1} \rceil, M_P)$ ones.
\end{proof}

The next result is analogous to the last one, but for finite subsets of $\mathbb{R}^2$ instead of 0-1 matrices. It shows that even if we add empty intervals of rows and columns to a finite subset $P$, it only changes $\px(n, P)$ by at most a constant factor (which depends on the size of the added intervals and the distances between the points in $P$). We use this result multiple times in the remainder of the paper.

\begin{lem} \label{ps_blank}
Suppose that $P \subseteq \mathbb{R}^2$ is a finite subset in which the distances between all consecutive pairs of rows and columns are in $[c, d]$. Let $P'$ be a dilation of $P$ by a factor of $q > 1$, i.e. for every point $(x, y) \in P$, the point $(qx, qy) \in P'$. Then $\px(n, P') \le (\lceil\frac{q d}{c}\rceil+1)^2 \px(\lceil \frac{\lceil \frac{n}{c} \rceil}{\lceil\frac{q d}{c}\rceil+1} \rceil c, P)$.
\end{lem}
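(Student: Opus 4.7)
My plan is to adapt Tardos's partitioning argument used in Lemma \ref{tardos_blank} to the continuous setting, taking axis-aligned cells of side length $c$ (the minimum consecutive row/column distance in $P$). Fix an open $P'$-free subset $S \subseteq [0,n]^2$ and set $k = \lceil qd/c\rceil$, $N = \lceil n/c\rceil$, and $M = \lceil N/(k+1)\rceil$. I would tile $[0,(k+1)Mc)^2 \supseteq [0,n]^2$ by half-open cells $C_{i,j} = [(i-1)c,ic)\times[(j-1)c,jc)$ with $1 \le i,j \le (k+1)M$. For each offset $(a,b)$ with $1 \le a,b \le k+1$, the class-$(a,b)$ cells are those with $i \equiv a$ and $j \equiv b \pmod{k+1}$. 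Define the compression map $\phi_{a,b}$ that acts as a translation on each class-$(a,b)$ cell, sending $C_{a+(k+1)(i'-1),\,b+(k+1)(j'-1)}$ onto $C_{i',j'}$ inside $[0,Mc)^2$, and let $S'_{a,b}$ be the image under $\phi_{a,b}$ of $S$ restricted to the interiors of the class-$(a,b)$ cells.

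Each $S'_{a,b}$ is open (a union of translates of open sets) and contained in $[0,Mc]^2$. Since compression is a translation on each cell and the $(k+1)^2$ families of class cells together partition $[0,(k+1)Mc)^2 \supseteq [0,n]^2$ up to a measure-zero set of boundaries,
\[
\mu_2(S) = \sum_{a,b=1}^{k+1} \mu_2(S'_{a,b}) \le (k+1)^2\,\px(Mc, P),
\]
provided each $S'_{a,b}$ is $P$-free; taking the supremum over $S$ then yields the stated bound.

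The main obstacle is verifying $P$-freeness of each $S'_{a,b}$. Suppose for contradiction $S'_{a,b}$ contains $P$ via functions $\tilde f_X, \tilde f_Y$. I would define $g_X, g_Y$ on $X_{P'}=\{qx : x \in X_P\}$ and $Y_{P'}=\{qy : y \in Y_P\}$ by composing $\tilde f_X, \tilde f_Y$ with the per-cell inverse of $\phi_{a,b}$, so that each $(g_X(x), g_Y(y))$ is the unique point of $S$ that $\phi_{a,b}$ maps to $(\tilde f_X(x), \tilde f_Y(y))$. The critical observation is that, since consecutive columns $x^{(\ell)}, x^{(\ell+1)}$ of $P$ satisfy $x^{(\ell+1)}-x^{(\ell)} \ge c$ while each compressed cell has width $c$, $\tilde f_X$ must send them to distinct compressed cells; letting $J_\ell$ denote the compressed cell index of $\tilde f_X(x^{(\ell)})$, we have $J_{\ell+1} > J_\ell$. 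Unfolding the translations gives
\[
g_X(x^{(\ell+1)}) - g_X(x^{(\ell)}) = \bigl(\tilde f_X(x^{(\ell+1)}) - \tilde f_X(x^{(\ell)})\bigr) + k(J_{\ell+1}-J_\ell)c \ge \delta_\ell + kc,
\]
where $\delta_\ell = x^{(\ell+1)}-x^{(\ell)} \in [c,d]$. Since $k \ge qd/c$ and $\delta_\ell \le d$, we have $kc \ge qd \ge q\delta_\ell$, so the right-hand side is at least $(q+1)\delta_\ell > q\delta_\ell$. Telescoping over consecutive pairs yields $g_X(x_0) - g_X(x_1) \ge q(x_0-x_1)$ for all $x_0 > x_1$ in $X_P$, which is exactly condition (1) for the containment of $P'$ in $S$; condition (2) follows for $g_Y$ by the symmetric argument on rows, and condition (3) holds because $\phi_{a,b}^{-1}(\tilde f_X(x),\tilde f_Y(y)) \in S$ by construction. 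This would force $S$ to contain $P'$, contradicting the hypothesis, so each $S'_{a,b}$ is $P$-free.
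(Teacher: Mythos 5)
Your proposal is correct and follows essentially the same route as the paper: partition $[0,\lceil n/c\rceil c]^2$ into $c\times c$ cells, split them into $(\lceil qd/c\rceil+1)^2$ modular classes, compress each class into $[0,Mc]^2$, and sum the measures. The only difference is that you explicitly verify the $P$-freeness of each compressed piece via the unfolding/telescoping computation (which checks out), whereas the paper asserts this step without detail.
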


\begin{proof}
Let $S_t \subseteq [0, n]^2$ for $t \in \mathbb{N}$ be a sequence of $P'$-free open subsets with the property that $\lim_{t \rightarrow \infty} \mu_2(S_t) = \px(n, P')$. We partition $[0, \lceil \frac{n}{c} \rceil c]^2$ into $c \times c$ open squares, deleting any points at the boundaries, and we use the $c \times c$ squares to define a family of subsets. Note that any deleted points at the boundaries have a total measure of $0$. For each $1 \le u \le \lceil\frac{q d}{c}\rceil+1$ and $1 \le v \le \lceil\frac{q d}{c}\rceil+1$, let $[S_t]_{u,v}$ be the subset of $[0,\lceil \frac{\lceil \frac{n}{c} \rceil}{\lceil\frac{q d}{c}\rceil+1} \rceil c]^2$ obtained from $S_t$ through the following operations. 

For each $1 \le u \le \lceil\frac{q d}{c}\rceil+1$, let $J_u$ be the set of all $x$ for which $\lceil\frac{q d}{c}\rceil+1$ divides $\lceil \frac{x}{c} \rceil-u$ excluding $x$ that are multiples of $c$. In $[S_t]'_{u,v}$, include the points in $S_t$ with $x$-coordinates in $J_u$ and $y$-coordinates in $J_v$ (note that these points form open $c \times c$ squares), do not include any points with $x$-coordinates not in $J_u$ or $y$-coordinates not in $J_v$. We form $[S_t]_{u,v}$ from $[S_t]'_{u,v}$ by completely contracting any rows or columns of $c \times c$ squares that were not included in $[S_t]'_{u,v}$ and translating the resulting subset into the square $[0,\lceil \frac{\lceil \frac{n}{c} \rceil}{\lceil\frac{q d}{c}\rceil+1} \rceil c]^2$. 

For all $u$ and $v$, $[S_t]_{u, v}$ is open since it is obtained by translating disconnected components of the intersection of an open set with an open set. Moreover $[S_t]_{u, v}$ must avoid $P$ or else $S_t$ would contain $P'$, since $P$ is finite and has no consecutive columns or rows with distance less than $c$. Thus each subset $[S_t]_{u, v}$ has $\mu_2([S_t]_{u, v}) \le \px(\lceil \frac{\lceil \frac{n}{c} \rceil}{\lceil\frac{q d}{c}\rceil+1} \rceil c, P)$, so $\mu_2(S_t) \le (\lceil\frac{q d}{c}\rceil+1)^2 \px(\lceil \frac{\lceil \frac{n}{c} \rceil}{\lceil\frac{q d}{c}\rceil+1} \rceil c, P)$ for all $t \in \mathbb{N}$. Thus $\px(n, P') = \lim_{t \rightarrow \infty} \mu_2(S_t) \le  (\lceil\frac{q d}{c}\rceil+1)^2 \px(\lceil \frac{\lceil \frac{n}{c} \rceil}{\lceil\frac{q d}{c}\rceil+1} \rceil c, P)$.
\end{proof}

\section{A connection to $\ex(n, M)$}\label{mainresult}

In this section, we show that $\px(n, P)$ encompasses the extremal function $\ex(n, M)$ up to a constant factor. We establish the theorem below by proving an upper bound and a lower bound.

\begin{thm}\label{mainth}
For all finite subsets $P \subseteq \mathbb{R}^2$, $\px(n, P) = \Theta(\ex(n, M_P))$, where the constants in the bound depend on the distances between the rows and columns in $P$.
\end{thm}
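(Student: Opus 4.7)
The plan is to prove the two bounds $\px(n,P) = \Omega(\ex(n,M_P))$ and $\px(n,P) = O(\ex(n,M_P))$ separately, letting $c$ and $d$ be the minimum and maximum distances between consecutive rows or columns of $P$; all constants will depend on $c$ and $d$.

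For the lower bound I would take an $m \times m$ 0-1 matrix $A$ avoiding $M_P$ with $\ex(m,M_P)$ ones, where $m = \lfloor n/d \rfloor$, and build the open set $S \subseteq [0,n]^2$ as the union, over cells $(i,j)$ with $A_{ij} = 1$, of open squares of side $\epsilon < c$ centred at $(d(j-\tfrac12), d(i-\tfrac12))$, so that $\mu_2(S) = \epsilon^2 \ex(m,M_P)$. To show $S$ is $P$-free I would argue by contradiction: if witness maps $f_X, f_Y$ embedded $P$ in $S$, then because the $x$-separation between consecutive square columns exceeds $\epsilon$ and consecutive columns of $P$ are at distance at least $c > \epsilon$, the column indices hit by $f_X$ would be strictly increasing (and analogously for rows), so those grid positions would furnish a copy of $M_P$ in $A$, contradicting $A$'s $M_P$-freeness. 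To rescale back to $n$ I would use the elementary block inequality $\ex(n,M_P) \le \lceil n/m\rceil^2 \ex(m,M_P)$, obtained by partitioning the $n \times n$ matrix into $\lceil n/m\rceil^2$ sub-blocks of side at most $m$ (each still $M_P$-free); since $\lceil n/m \rceil \le \lceil d \rceil + 1$, this yields $\px(n,P) = \Omega(\ex(n,M_P))$.

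For the upper bound I would use a random-grid sampling transformation. Given an open $P$-free $S \subseteq [0,n]^2$, pick $x_j$ uniform in $[j-1,j]$ and $y_i$ uniform in $[i-1,i]$ for $i,j = 1,\dots,\lceil n\rceil$, all independent, and set $A_{ij} = 1$ iff $(x_j, y_i) \in S$. Linearity of expectation gives $\E[\sum_{i,j}A_{ij}] = \mu_2(S)$. The key deterministic claim is that for \emph{every} realisation of the sampling, $A$ is $S(M_P, k)$-free with $k = \lceil d\rceil$: otherwise there would be rows $R_1 < \cdots < R_r$ and columns $C_1 < \cdots < C_\ell$ in $A$ with all consecutive gaps at least $k+1$ where $A$ realises the ones of $M_P$, and setting $f_X(x_i^P) = x_{C_i}$, $f_Y(y_a^P) = y_{R_a}$ one checks the slope condition via $x_{C_{i'}} - x_{C_i} \ge C_{i'} - C_i - 1 \ge (i'-i)(k+1) - 1 \ge x_{i'}^P - x_i^P$ (using $k \ge d$), while $(f_X(x_i^P), f_Y(y_a^P)) \in S$ whenever $(x_i^P, y_a^P) \in P$, so $S$ would contain $P$. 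Applying Lemma~\ref{tardos_blank} to the claim yields $\sum A_{ij} \le (k+1)^2 \ex(\lceil n\rceil, M_P)$ for every realisation, hence $\mu_2(S) \le (k+1)^2 \ex(\lceil n\rceil, M_P) = O(\ex(n,M_P))$.

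The main obstacle is that the containment relation is rigid: all points of $P$ in one column must map to a single common $x$-coordinate in $S$, and similarly for rows. A naive discretisation that only records whether each cell meets $S$ loses this shared-coordinate structure, so an $M_P$-pattern among such cells need not lift to an actual copy of $P$ in $S$. The random-grid sampler is tailor-made to resolve this: all entries in column $j$ of $A$ share the common sampled $x$-coordinate $x_j$, so an $M_P$-pattern in $A$ automatically comes with the aligned coordinates needed to embed $P$ into $S$.
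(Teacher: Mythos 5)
Your proposal is correct, and its two halves relate to the paper's proof differently. The lower bound is essentially the paper's construction: the paper likewise inflates an extremal $M_P$-free matrix into a union of open squares of side at most the minimum row/column gap of $P$ and argues that an embedding of $P$ would place distinct rows (columns) of $P$ in distinct square rows (columns) --- because the dilation condition forces images of distinct columns of $P$ to be at least $c$ apart, hence in different squares --- yielding a copy of $M_P$; the paper rescales via super-additivity of $\ex(\cdot, M_P)$ (using squares of side $\le 1$ so that $n/c \ge n$) where you use the block inequality, and both work, though your claim $\lceil n/m\rceil \le \lceil d\rceil+1$ with $m=\lfloor n/d\rfloor$ can fail for small $n/d$ (it is at most about $2d+1$, which only changes the constant). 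The upper bound is where you genuinely diverge. The paper first normalizes $P$ via Lemma~\ref{ps_blank}, then fixes a near-extremal open $S_t$, approximates it from \emph{inside} by the union $Z_{t,r}$ of the cells of an $r\times r$ grid wholly contained in $S_t$, shows $\mu_2(Z_{t,r})\to\mu_2(S_t)$, reads off a 0-1 matrix avoiding $S(M_P,\lceil r/n\rceil)$, and applies Lemma~\ref{tardos_blank} through a double limit in $t$ and $r$. Your random-grid sampler replaces all of this with a one-shot argument: $\E\bigl[\sum_{i,j}A_{ij}\bigr]=\mu_2(S)$ exactly, and the deterministic $S(M_P,k)$-freeness of every realization rests precisely on the shared-coordinate point you flag --- every one in column $j$ of $A$ certifies membership of $S$ at the single abscissa $x_j$, so a $(k+1)$-spaced $M_P$-pattern lifts to witness maps satisfying the dilation condition because $x_{C_{i'}}-x_{C_i}\ge (i'-i)(k+1)-1\ge (i'-i)d\ge x^P_{i'}-x^P_i$. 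This avoids both the inner-approximation limits and the preliminary dilation lemma, at the cost of introducing (a very mild use of) randomness; both routes ultimately bottom out in Lemma~\ref{tardos_blank}.
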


Then we use this theorem to derive several corollaries about $\px(n, P)$ from known results about $\ex(n, M)$.

\subsection{$\px(n, P) = \Theta(\ex(n, M_P))$ for all finite subsets $P \subseteq \mathbb{R}^2$}\label{mainsub}

In order to prove Theorem \ref{mainth}, we first provide a construction which implies that $\px(n, P) = \Omega(\ex(n, M_P))$. Then we show that $\px(n, P) = O(\ex(n, M_P))$ by turning $P$-free subsets into 0-1 matrices and applying Lemma \ref{tardos_blank}.

\begin{thm}\label{lowerth}
For all finite subsets $P \subseteq \mathbb{R}^2$, $\px(n, P) = \Omega(\ex(n, M_P))$, where the constant in the bound depends on the distances between the rows and columns in $P$.
\end{thm}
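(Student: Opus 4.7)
The plan is to construct an open $P$-free subset $S \subseteq [0, n]^2$ whose Lebesgue measure is within a constant factor (depending only on $P$) of $\ex(\lfloor n \rfloor, M_P)$. Starting from any extremal $M_P$-free $0$-$1$ matrix, I would place a small open square at each grid cell corresponding to a one, shrunken just enough that the slope-$\ge 1$ containment condition cannot be met without extracting a copy of $M_P$ back from the matrix.

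Let $c > 0$ be the smallest gap between any two consecutive rows or any two consecutive columns of $P$, and set $\epsilon = \min(c, 1)/2$; degenerate cases such as $P$ having only one row, only one column, or a single point either use only the existing gap or are trivial. Given $n$, set $m = \lfloor n \rfloor$ and fix an $m \times m$ $M_P$-free $0$-$1$ matrix $A$ realizing $\ex(m, M_P)$ ones. Tile $[0, m]^2 \subseteq [0, n]^2$ into $m^2$ open unit cells, naturally identified with the entries of $A$ so that horizontal and vertical cell orders match column and row orders of $A$ up to a reflection. For each one of $A$, add to $S$ the open $\epsilon \times \epsilon$ square centered at the corresponding cell center. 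The resulting $S$ is a disjoint union of open squares, so it is open, with $\mu_2(S) = \epsilon^2 \ex(m, M_P)$.

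The main step is to verify that $S$ is $P$-free. Suppose that functions $f_X, f_Y$ of slope at least $1$ witness that $S$ contains $P$. Points of $P$ in the same row have equal $y$-coordinates, hence equal images under $f_Y$, so their image points lie at a common $y$-coordinate and thus in cells with the same vertical index. Points of $P$ in distinct rows have image $y$-coordinates differing by at least $c > \epsilon$ by the slope condition on $f_Y$; since all $\epsilon$-squares in cells sharing a vertical index lie inside a single $y$-interval of length $\epsilon$, such images cannot share a vertical index. The analogous dichotomy in the $x$-direction, together with the monotonicity of $f_X$ and $f_Y$, forces the cells hit by the image of $P$ to form a copy of $M_P$ inside $A$, contradicting the choice of $A$. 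Hence $\px(n, P) \ge \mu_2(S) = \epsilon^2 \ex(\lfloor n \rfloor, M_P) = \Omega(\ex(n, M_P))$. The principal obstacle is precisely preventing this collapse of distinct rows (or columns) of $P$ into a single row (or column) of cells; the condition $\epsilon < c$ is exactly what enforces the required separation.
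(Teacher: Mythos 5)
Your construction is correct and is essentially the paper's own argument: both start from an extremal $M_P$-free 0-1 matrix and replace each one by a small open square whose side is bounded by the minimum row/column gap of $P$, so that any containment of $P$ in $S$ forces distinct rows (columns) of $P$ into distinct grid rows (columns) and same rows (columns) into the same, pulling back to a copy of $M_P$ in the matrix. The only immaterial difference is that the paper refines the grid to cells of sidelength $c \le \min(c',1)$ and uses the full open cells together with super-additivity of $\ex(n, M_P)$, whereas you keep a unit grid and shrink each square to $\epsilon \times \epsilon$ inside its cell; both yield the same constant factor of order $\min(c',1)^2$.
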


\begin{proof}
Let $n$ be a positive integer, let $d$ be the minimum distance between any two consecutive rows of $P$, let $d'$ be the minimum distance between any two consecutive columns of $P$, and define $c' = \min(d, d')$.
Suppose that $c$ is the maximum positive real number that is at most $\min(c', 1)$ for which $\frac{n}{c}$ is an integer. Note that $c \ge \frac{\min(c', 1)}{2}$, since $\frac{n}{x}$ is an integer whenever $x = \frac{n}{2^j}$ for some $j \in \mathbb{N}$. Let $S$ be a subset of $[0,n]^2$ obtained by starting with any $M_P$-free $\frac{n}{c} \times \frac{n}{c}$ 0-1 matrix $A$ with the maximum possible number of ones, and for each pair of integers $1 \leq i, j \leq \frac{n}{c}$ such that $a_{i, j} = 1$, adding the points in the open square $(c(j-1),c j) \times (n-c i, n - c(i-1))$ to $S$. Note that $S$ is open, since it is a union of open sets.

Suppose for contradiction that $S$ contains $P$. Each point in the copy of $P$ in $S$ must be in a distinct open square of $S$, since the open squares have sidelength $c$, and $c$ is at most both the minimum distance between any two consecutive rows of $P$ and minimum distance between any two consecutive columns of $P$. For any points $p, p'$ in the copy of $P$ in the same row, the open squares that contain $p$ and $p'$ must have the same set of $y$-coordinates. For any points $p, p'$ in the copy of $P$ in different rows, the open squares $U$ and $U'$ that contain $p$ and $p'$ must have disjoint sets of $y$-coordinates, since $c$ is at most the minimum distance between any two consecutive rows of $P$ and both $U$ and $U'$ are open $c \times c$ squares. Similarly, for any points $p, p'$ in the copy of $P$ in the same column, the open squares that contain $p$ and $p'$ must have the same set of $x$-coordinates. For any points $p, p'$ in the copy of $P$ in different columns, the open squares $U$ and $U'$ that contain $p$ and $p'$ must have disjoint sets of $x$-coordinates, since $c$ is at most the minimum distance between any two consecutive columns of $P$ and both $U$ and $U'$ are open $c \times c$ squares. 

For each point in the copy of $P$ in $S$ with coordinates $(x, y)$, we have $a_{\frac{n}{c}+1-\lceil \frac{y}{c} \rceil, \lceil \frac{x}{c} \rceil}= 1$. Note that for any points $p, p'$ in the copy of $P$ that are in the open squares $U, U'$, the ones in $A$ corresponding to $p$ and $p'$ must be in the same row if $U$ and $U'$ have the same set of $y$-coordinates, so the ones in $A$ corresponding to $p$ and $p'$ must be in the same row if $p$ and $p'$ are in the same row. The ones in $A$ corresponding to $p$ and $p'$ must be in different rows if $U$ and $U'$ have disjoint sets of $y$-coordinates, so the ones in $A$ corresponding to $p$ and $p'$ must be in different rows if $p$ and $p'$ are in different rows. Similarly, the ones in $A$ corresponding to $p$ and $p'$ must be in the same column if $U$ and $U'$ have the same set of $x$-coordinates, so the ones in $A$ corresponding to $p$ and $p'$ must be in the same column if $p$ and $p'$ are in the same column. The ones in $A$ corresponding to $p$ and $p'$ must be in different columns if $U$ and $U'$ have disjoint sets of $x$-coordinates, so the ones in $A$ corresponding to $p$ and $p'$ must be in different columns if $p$ and $p'$ are in different columns.

Thus the ones in $A$ form a copy of $M_P$ in $A$, a contradiction of $A$ being $M_P$-free. Thus $S$ is $P$-free, and $\mu_2(S) = c^2 \ex(\frac{n}{c}, M_P) \ge c^2 \ex(n, M_P) = \Omega(\ex(n, M_P))$, where the first inequality follows by super-additivity of the extremal function $\ex(n, M_P)$.
\end{proof}

Next we complete the proof of Theorem \ref{mainth} to show that the construction in Theorem \ref{lowerth} is sharp up to a constant factor.

\begin{thm}
For all finite subsets $P \subseteq \mathbb{R}^2$, $\px(n, P) = O(\ex(n, M_P))$, where the constant in the bound depends on the distances between the rows and columns in $P$.
\end{thm}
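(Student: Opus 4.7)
The plan is to discretize a $P$-free open $S \subseteq [0, n]^2$ into a 0--1 matrix by sampling along a lattice, to show that this matrix avoids $S(M_P, K)$ for a suitable $K$, and then to apply Lemma \ref{tardos_blank}. Let $c_{\max}$ denote the maximum distance between consecutive rows or columns of $P$, and set $c' = \max(c_{\max}, 1)$ and $K = \lceil c_{\max}/c' \rceil$. For each shift $(u, v) \in [0, c')^2$, let $L_{u, v} = \{((j-1)c' + u,\, n - (i-1)c' - v)\} \cap [0, n]^2$, and define the $N \times N$ matrix $M_{u, v}$ (with $N = \lfloor n/c' \rfloor + 1$) to have a $1$ in position $(i, j)$ iff the corresponding lattice point lies in $S$. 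Writing $N_1(M_{u,v})$ for the number of ones of $M_{u,v}$, a standard Fubini computation gives $\E_{(u,v)}[N_1(M_{u,v})] = \mu_2(S) / (c')^2$.

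The core step is to show that $M_{u, v}$ is $S(M_P, K)$-free for every shift. If it contained a copy of $S(M_P, K)$, then the 1-positions of the embedded $M_P$ would occupy lattice rows $I_1 < \cdots < I_r$ and columns $J_1 < \cdots < J_k$ with consecutive gaps at least $K+1$. Setting $f_X(x_m) = (J_m - 1) c' + u$ and $f_Y(y_l) = n - (I_l - 1) c' - v$, where the $x_m$ and $y_l$ are the column and row coordinates of $P$, the spacing $(K + 1) c' > c_{\max}$ makes both maps satisfy the slope-$\ge 1$ requirement, and $(f_X(x_m), f_Y(y_l))$ is, by construction, a lattice point lying in $S$ at each $1$ of $M_P$. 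This exhibits $P$ inside $S$, a contradiction.

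Now Lemma \ref{tardos_blank} yields $N_1(M_{u,v}) \le (K + 1)^2 \ex(\lceil N/(K + 1) \rceil, M_P)$, and combining this uniform upper bound with the expectation identity above gives
\[
\mu_2(S) \;\le\; (c')^2 (K + 1)^2 \ex(\lceil N/(K + 1) \rceil, M_P) \;=\; O\!\bigl(\ex(n, M_P)\bigr),
\]
where the constant depends on $c_{\max}$ (and hence on $P$). Taking the supremum over all $P$-free open $S \subseteq [0, n]^2$ proves the theorem. The main obstacle is the avoidance claim in the second paragraph: one must ensure that the two conditions on $f_X, f_Y$ (the slope condition and the image-in-$S$ condition) can be simultaneously met from only the discrete data in $M_{u,v}$. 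The lattice construction is designed precisely so that both can be read off directly from the lattice coordinates, with the thickening factor $K$ supplying the slope. Some bookkeeping is then needed to reconcile the top-down row indexing of $M_P$ with the bottom-up lattice indexing, and to handle minor boundary adjustments at the edge of $[0, n]^2$.
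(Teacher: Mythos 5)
Your proof is correct, and while it follows the same overall architecture as the paper's proof (discretize a $P$-free open set into a 0--1 matrix, show that matrix avoids $S(M_P,K)$, and invoke Lemma \ref{tardos_blank}), the discretization step is genuinely different. The paper first normalizes $P$ to unit row/column spacing via Lemma \ref{ps_blank}, then approximates $S$ \emph{from inside} by unions of grid squares $Z_{t,r}$ (a square contributes a one only if its entire interior lies in $S$), which forces a double limit $r\to\infty$, $t\to\infty$ to recover $\mu_2(S)$. You instead sample single lattice points on a shifted grid and average over the shift $(u,v)\in[0,c')^2$; Fubini then gives the exact identity $\E_{(u,v)}[N_1(M_{u,v})]=\mu_2(S)/(c')^2$ for a \emph{fixed} mesh, so no limiting argument is needed, and the uniform bound $N_1(M_{u,v})\le (K+1)^2\ex(\lceil N/(K+1)\rceil,M_P)$ applied to the average finishes the proof with an explicit constant $(c')^2(K+1)^2$. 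Your choice $K=\lceil c_{\max}/c'\rceil$ also absorbs the spacing of $P$ directly into the thickening parameter, so Lemma \ref{ps_blank} is not needed. The one point to spell out (which you correctly flag) is the avoidance claim: a copy of $S(M_P,K)$ forces consecutive selected lattice rows/columns to be at least $K+1$ indices, hence $(K+1)c'>c_{\max}$ in coordinates, apart, which gives the slope-$\ge 1$ condition for consecutive rows/columns of $P$ and hence, by telescoping, for all pairs; together with the fact that a one records membership of the lattice point in $S$, this exhibits $P$ in $S$. Only trivial edge cases remain unaddressed ($P$ a single point, where $c_{\max}$ is undefined and both sides vanish, and the restriction to integer $n$), so the argument stands.
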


\begin{proof}
Let $n$ be a positive integer. If $P$ only has a single point, then $\px(n, P) = \ex(n, M_P) = 0$. Thus, we may suppose that $P$ has multiple points, so $M_P$ has multiple ones. It is a well-known fact that any 0-1 matrix $M$ with multiple ones has $\ex(n, M) \ge n$, so we can conclude that $\ex(n, M_P) \ge n$ since $P$ has multiple points.

By Lemma \ref{ps_blank}, it suffices to prove this result for subsets $P$ where the distance between consecutive rows and consecutive columns is equal to $1$. Let $S_t \subseteq [0, n]^2$ for $t \in \mathbb{N}$ be a sequence of $P$-free open sets with the property that $\lim_{t \rightarrow \infty} \mu_2(S_t) = \px(n, P)$. Since each $S_t$ is open, we can define an infinite family of unions of open squares $Z_{t, r} \subseteq S_t$ for $r \in \mathbb{Z}^{+}$ with Lebesgue measures converging to $\mu_2(S_t)$. 

Specifically let $Z_{t, r}$ be obtained from $S_t$ by drawing an $r \times r$ grid of squares each of dimensions $\frac{n}{r} \times \frac{n}{r}$ on $[0, n]^2$, and including the whole interior $U$ of each square in $Z_{t, r}$ if and only if $U \subseteq S_t$, and otherwise including no part of $U$ in $S_t$. We do not include any points in $Z_{t, r}$ from interiors $U$ of squares for which $U \not \subseteq S_t$. We also do not include any points at the boundaries of the $\frac{n}{r} \times \frac{n}{r}$ squares in $Z_{t, r}$. 

For each point $p \in S_t$, let $w_p$ be the maximum positive real number of the form $\frac{n}{x}$ for any positive integer $x$ such that the open ball $B(p, w_p)$ is a subset of $S_t$. Since $S_t$ is open, $w_p$ is defined for all $p \in S_t$. Inside each open ball $B(p, w_p)$, we can draw an open square $T_p$ of sidelength $w_p$ with axis-parallel sides centered at $p$. Note that $S_t = \cup_{p \in S_t} T_p$. 

Fix a positive integer $r$. We can classify the points in $S_t$ that will not be included in $Z_{t, r}$ into two sets $B_{t,r}$ and $L_{t,r}$. If $p = (x, y) \in S_t$ satisfies $x = i \frac{n}{r}$ or $y = j \frac{n}{r}$ for some integers $i$ and $j$, then $p$ is on the boundary of a $\frac{n}{r} \times \frac{n}{r}$ square, so $p$ is not included in $Z_{t, r}$. Let $B_{t,r}$ be the set of points in $S_t$ that are on the boundary of a $\frac{n}{r} \times \frac{n}{r}$ square. If $p \in S_t$ is not in $B_{t,r}$, then the only way that $p$ will not be included in $Z_{t, r}$ is if the open square $U$ of sidelength $\frac{n}{r}$ which contains $p$ is not a subset of $S_t$. If $w_p \ge 2\frac{n}{r}$, then $U \subseteq T_p \subseteq S_t$. Thus if $U$ is not a subset of $S_t$, then we must have $w_p < 2 \frac{n}{r}$. Let $L_{t,r}$ be the set of points $p$ in $S_t$ that are not in $B_{t,r}$ and are not in $Z_{t, r}$. Note that $S_t = B_{t,r} \cup L_{t,r} \cup Z_{t, r}$, and the sets $B_{t,r}$, $L_{t,r}$, and $Z_{t, r}$ are disjoint.

Thus $\mu_2(S_t) =  \mu_2(B_{t,r} \cup L_{t,r} \cup Z_{t, r}) = \mu_2(B_{t,r})+\mu_2(L_{t,r})+\mu_2(Z_{t, r}) = \mu_2(L_{t,r})+\mu_2(Z_{t, r})$. Since $w_p < 2 \frac{n}{r}$ for all $p \in L_{t, r}$, we have  $\lim_{r \rightarrow \infty} \mu_2(L_{t,r}) = 0$. Therefore $\lim_{r \rightarrow \infty} \mu_2(Z_{t, r}) = \mu_2(S_t)$, so we have $\lim_{t \rightarrow \infty}\lim_{r \rightarrow \infty} \mu_2(Z_{t, r}) = \px(n, P)$.

For each subset $Z_{t, r}$, observe that we can define an $r \times r$ 0-1 matrix $A_{t, r}$ for which the $(i, j)$-entry equals $1$ if and only if the square in row $i$ and column $j$ of the grid is included in $Z_{t, r}$. Our numbering of the squares in the grid goes from left to right and top to bottom, like a matrix. Note that $Z_{t, r}$ avoids $P$, since $S_t$ avoids $P$ and $Z_{t, r} \subseteq S_t$. Thus $A_{t, r}$ avoids $S(M_P, \lceil \frac{r}{n} \rceil)$, or else the ones in $S(M_P, \lceil \frac{r}{n} \rceil)$ would correspond to a union of interiors of squares in $Z_{t, r}$ that contain a copy of $P$. 

Let $|A_{t, r}|$ denote the number of ones in $A_{t, r}$. For each $t \geq 0$ and $r \geq n^2$, we have $\mu_2(Z_{t, r}) = (\frac{n}{r})^2 |A_{t, r}| \le (\frac{n}{r})^2 \ex(r, S(M_P, \lceil \frac{r}{n} \rceil)) \le  (\frac{n}{r})^2 (\lceil \frac{r}{n}\rceil+1)^2 \ex(\lceil \frac{r}{\lceil \frac{r}{n} \rceil+1 } \rceil, M_P) = (1+o(1)) \ex(\lceil \frac{r}{\lceil \frac{r}{n} \rceil+1 } \rceil, M_P) \le (1+o(1)) \ex(n, M_P)$, where the first inequality follows from the preceding paragraph and the second inequality follows from Lemma \ref{tardos_blank}. Thus $\px(n, P) = \lim_{t \rightarrow \infty}\lim_{r \rightarrow \infty} \mu_2(Z_{t, r}) = O(\ex(n, M_P))$.
\end{proof}

\subsection{Corollaries of results on $\ex(n, M)$}\label{01cor}

Marcus and Tardos proved that $\ex(n, M) = O(n)$ for every permutation matrix $M$ \cite{MT}, which was the missing piece to prove the Stanley-Wilf conjecture.   Fox later sharpened the bound by proving that $\ex(n, M) = 2^{O(k)}n$ for every $k \times k$ permutation matrix $M$. This implies the corollary  below.

\begin{cor}
Let $P \subseteq \mathbb{R}^2$ with $|P| = k$ for which all points in $P$ have different $x$-coordinates and different $y$-coordinates, and the distances between all consecutive rows and all consecutive columns are at most $c$. Then $\px(n, P) = 2^{O(k)}n$, where the constant in the bound depends on $c$. 
\end{cor}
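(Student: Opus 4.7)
The plan is to reduce to a canonical representative with uniformly spaced rows and columns, then to apply Theorem \ref{mainth} together with Fox's sharpening of Marcus--Tardos, which says $\ex(n, M) = 2^{O(k)} n$ for every $k \times k$ permutation matrix $M$.

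First I would observe that since the $k$ points of $P$ have pairwise distinct $x$-coordinates and pairwise distinct $y$-coordinates, the matrix $M_P$ is a $k \times k$ permutation matrix. Writing the $x$-coordinates of $P$ in increasing order as $x_1 < \cdots < x_k$ and the $y$-coordinates as $y_1 < \cdots < y_k$, there is a permutation $\sigma$ of $[k]$ with $P = \{(x_i, y_{\sigma(i)}) : i \in [k]\}$. I would then introduce the canonical set $P^{\ast} = \{((i-1)c,\, (\sigma(i)-1)c) : i \in [k]\}$, whose consecutive rows and columns are all at distance exactly $c$, and verify that $P^{\ast}$ contains $P$ in the sense of the paper. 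The maps $f_X(x_i) = (i-1)c$ and $f_Y(y_j) = (j-1)c$ are strictly increasing, and for any $i > j$ the hypothesis that consecutive $x$-distances are at most $c$ gives $x_i - x_j \le (i-j)c$, while $f_X(x_i) - f_X(x_j) = (i-j)c$, so the slope is at least $1$; the same holds for $f_Y$. Since $f_X$ and $f_Y$ send each $(x_i, y_{\sigma(i)}) \in P$ into $P^{\ast}$, Lemma \ref{pxcontain} yields $\px(n, P) \le \px(n, P^{\ast})$.

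Now all consecutive row and column distances of $P^{\ast}$ equal $c$, so Theorem \ref{mainth} applied to $P^{\ast}$ gives $\px(n, P^{\ast}) = O(\ex(n, M_{P^{\ast}})) = O(\ex(n, M_P))$ with a constant depending only on $c$. Combining with $\ex(n, M_P) = 2^{O(k)} n$ from Fox's theorem produces $\px(n, P) \le \px(n, P^{\ast}) = 2^{O(k)} n$, with the multiplicative constant depending on $c$.

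The main obstacle lies in the choice of $P^{\ast}$: if one applies Theorem \ref{mainth} directly to $P$ itself, the hidden constant depends on the full distance profile of $P$, but the hypothesis only provides an upper bound $c$ on consecutive distances rather than a two-sided bound. Passing to a uniformly spaced $P^{\ast}$ of the same permutation type collapses this dependence to a dependence on $c$ alone, after which the two known extremal bounds combine cleanly.
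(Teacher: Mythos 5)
Your proof is correct and follows essentially the route the paper intends: the corollary is stated there as an immediate consequence of Theorem \ref{mainth} together with Fox's bound $\ex(n,M)=2^{O(k)}n$ for $k\times k$ permutation matrices, with no further detail given. Your explicit normalization to the uniformly spaced set $P^{\ast}$ via Lemma \ref{pxcontain} is a worthwhile addition, since it is precisely what justifies the claim that the constant depends only on the upper bound $c$ rather than on the full profile of consecutive distances in $P$, which is all that a direct application of Theorem \ref{mainth} to $P$ itself would yield.
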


The K\H{o}vari-S\'{o}s-Tur\'{a}n theorem states that $\ex(n, J_{s,t}) = O(n^{2-\frac{1}{t}})$, where $J_{s, t}$ denotes the $s \times t$ matrix of all ones. As a result, for every 0-1 matrix $M$ there exists a constant $\epsilon > 0$ such that $\ex(n, M) = O(n^{2-\epsilon})$. This is because $M$ is contained in the all-ones matrix of the same dimensions, so we can let $r$ be the width of $M$ and set $\epsilon = \frac{1}{r}$.

\begin{cor}
For every subset $P$ with $s$ rows, $t$ columns, and $s t$ points, we have $\px(n, P) = O(n^{2-\frac{1}{t}})$.
\end{cor}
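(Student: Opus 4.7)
The plan is to reduce the corollary to the Kővári–Sós–Turán bound on $\ex(n, J_{s,t})$ via Theorem \ref{mainth}. The key observation is combinatorial: if $P$ has $s$ rows and $t$ columns (in the sense defined in the introduction, as maximal subsets with common $y$- or $x$-coordinates) and exactly $st$ points total, then the row–column grid determined by $P$ has $st$ cells and each must contain a point of $P$. Hence every grid position is occupied, which forces $M_P = J_{s,t}$, the $s \times t$ all-ones matrix.

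Given this identification, I would invoke Theorem \ref{mainth}, which tells us that $\px(n, P) = \Theta(\ex(n, M_P)) = \Theta(\ex(n, J_{s,t}))$, with the implicit constant depending only on the distances between the rows and columns of $P$. Then I would apply the Kővári–Sós–Turán bound $\ex(n, J_{s,t}) = O(n^{2-\frac{1}{t}})$ (quoted immediately before the corollary in the paper) to conclude $\px(n, P) = O(n^{2-\frac{1}{t}})$.

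There is essentially no obstacle here beyond making the identification $M_P = J_{s,t}$ precise, since Theorem \ref{mainth} and the Kővári–Sós–Turán theorem do all the real work. If one wished to be more self-contained, an alternative route would bypass Theorem \ref{mainth} and instead argue directly: note that $P$ is contained (in the sense of Section 1) in some dilate of $J_{s,t}$ viewed as a subset of $\mathbb{R}^2$, and apply the $t = $ (number of columns) case of the strengthening of Kővári–Sós–Turán stated in the abstract, $\px(n, P_{s,t,c}) = O(s^{1/t} n^{2-1/t})$, after using Lemma \ref{pxcontain} to pass to the segment version. But the direct reduction through $\ex(n, J_{s,t})$ is cleaner and is the obvious intended proof.
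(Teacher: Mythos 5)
Your proof is correct and is exactly the intended argument: the hypothesis of $st$ points in $s$ rows and $t$ columns forces $M_P = J_{s,t}$, and then Theorem \ref{mainth} together with the K\H{o}vari--S\'{o}s--Tur\'{a}n bound $\ex(n, J_{s,t}) = O(n^{2-\frac{1}{t}})$ gives the result, which is precisely how the paper derives this corollary (it states no separate proof beyond the preceding paragraph quoting K\H{o}vari--S\'{o}s--Tur\'{a}n). The only nuance worth noting is that, as in Theorem \ref{mainth}, the implied constant depends on the distances between consecutive rows and columns of $P$.
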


We prove a stronger version of this result in Section \ref{infinite_p} for infinite sets of points that look like equal signs ($=$) when $t = 2$ and equivalence symbols ($\equiv$) when $t = 3$.

\begin{cor}
For every finite subset $P \subseteq \mathbb{R}^2$ there exists a constant $\epsilon > 0$ such that $\px(n, P) = O(n^{2-\epsilon})$.
\end{cor}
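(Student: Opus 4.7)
The plan is to reduce this statement directly to the analogous fact about $\ex(n, M)$ that was just observed, and then invoke Theorem~\ref{mainth} to transfer the bound from the discrete to the continuous setting. Since $P$ is finite, the matrix $M_P$ is a well-defined finite 0-1 matrix. Let $t$ denote the number of columns of $M_P$ (equivalently, the number of columns of $P$), and let $s$ denote the number of rows. Then $M_P$ is contained in the $s \times t$ all-ones matrix $J_{s,t}$, so any $M_P$-free matrix is a fortiori $J_{s,t}$-free, giving $\ex(n, M_P) \le \ex(n, J_{s,t})$.

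Next I would apply the K\H{o}vari-S\'{o}s-Tur\'{a}n theorem, which yields $\ex(n, J_{s,t}) = O(n^{2-\frac{1}{t}})$, and hence $\ex(n, M_P) = O(n^{2-\frac{1}{t}})$. Setting $\epsilon = \frac{1}{t}$, this is a constant $\epsilon > 0$ that depends only on $P$ (through its number of columns).

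Finally, I would invoke Theorem~\ref{mainth}, which states that $\px(n, P) = \Theta(\ex(n, M_P))$ with constants depending only on the distances between rows and columns of $P$. Combining this with the bound on $\ex(n, M_P)$ yields $\px(n, P) = O(n^{2-\epsilon})$, with the implied constant depending on $P$.

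There is no real obstacle here; the entire content of the statement is packaged inside Theorem~\ref{mainth} together with the preceding corollary's observation that every finite $M$ has $\ex(n, M) = O(n^{2-\epsilon})$ for some $\epsilon > 0$. The only minor care needed is to confirm that the constant $\epsilon$ can be chosen uniformly for the given $P$ (which it can, since $t$ is determined by $P$) and to note that the constant absorbed by the big-$O$ incorporates both the K\H{o}vari-S\'{o}s-Tur\'{a}n constant and the constant from Theorem~\ref{mainth} which depends on the distances between consecutive rows and columns of $P$.
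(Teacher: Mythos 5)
Your proposal is correct and matches the paper's intended argument exactly: the paper derives this corollary from the observation that $M_P$ is contained in the all-ones matrix $J_{s,t}$ of the same dimensions, the K\H{o}vari-S\'{o}s-Tur\'{a}n bound $\ex(n, J_{s,t}) = O(n^{2-\frac{1}{t}})$ with $\epsilon = \frac{1}{t}$ where $t$ is the width, and Theorem~\ref{mainth} to transfer the bound to $\px(n, P)$. No gaps.
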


We show that the last corollary is false for bounded infinite subsets in Section \ref{infinite_p}. The K\H{o}vari-S\'{o}s-Tur\'{a}n theorem is known to be sharp for $s \ge 2$ and $t = 2$, as well as $s \ge 3$ and $t = 3$. Thus, we obtain the following corollary about $\px(n, P)$. 

\begin{cor}\label{kst2p}
For every subset $P \subseteq \mathbb{R}^2$ with $k \ge 2$ rows, $2$ columns, and $2 k$ points, we have $\px(n, P) = \Theta(n^{3/2})$. For every subset $P \subseteq \mathbb{R}^2$ with $k \ge 3$ rows, $3$ columns, and $3 k$ points, we have $\px(n, P) = \Theta(n^{5/3})$.
\end{cor}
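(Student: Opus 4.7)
The plan is to reduce the statement to a direct consequence of Theorem \ref{mainth} together with the known sharpness results for the K\H{o}vari-S\'{o}s-Tur\'{a}n theorem mentioned in the paragraph preceding the corollary. The only genuine content to verify is that the combinatorial structure imposed on $P$ (namely, $k$ rows, $t$ columns, and $tk$ points for $t \in \{2,3\}$) forces the associated matrix $M_P$ to be the complete all-ones matrix $J_{k,t}$.

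First I would observe that since $P$ has $t$ columns, each row of $P$ contains at most $t$ points. To accumulate $tk$ points across $k$ rows, each row must contain exactly $t$ points, one in each of the $t$ columns. By the definition of $M_P$ given in the introduction, it follows that $M_P$ is the $k \times t$ matrix of all ones, i.e. $M_P = J_{k,t}$.

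Next, I would apply Theorem \ref{mainth}, which gives
\[
\px(n, P) = \Theta(\ex(n, M_P)) = \Theta(\ex(n, J_{k,t})),
\]
where the implicit constants depend on the distances between consecutive rows and columns of $P$. For the upper bound, the K\H{o}vari-S\'{o}s-Tur\'{a}n theorem gives $\ex(n, J_{k,t}) = O(n^{2-1/t})$ for fixed $k, t$. For the lower bound, since $J_{k,t}$ contains $J_{t,t}$ as a submatrix, any 0-1 matrix avoiding $J_{t,t}$ also avoids $J_{k,t}$, so $\ex(n, J_{k,t}) \ge \ex(n, J_{t,t}) = \Omega(n^{2-1/t})$ by the classical constructions (projective-plane incidence graphs for $t=2$, and the construction of Brown or the norm graphs of Koll\'ar--R\'onyai--Szab\'o for $t=3$) that witness the sharpness of K\H{o}vari--S\'{o}s--Tur\'{a}n in these cases. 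Setting $t=2$ and $t=3$ yields the two claimed bounds $\Theta(n^{3/2})$ and $\Theta(n^{5/3})$.

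There is no real obstacle here: the corollary is packaged precisely so as to expose the underlying complete-bipartite structure, and all the work has been done in Theorem \ref{mainth} and in the classical extremal results explicitly invoked in the surrounding text.
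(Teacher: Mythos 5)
Your proposal is correct and follows essentially the same route as the paper, which presents this corollary as an immediate consequence of Theorem \ref{mainth} together with the stated sharpness of the K\H{o}vari--S\'{o}s--Tur\'{a}n theorem for $t=2$, $s\ge 2$ and $t=3$, $s\ge 3$. Your explicit verification that the hypotheses force $M_P = J_{k,t}$ and your monotonicity argument $\ex(n,J_{k,t}) \ge \ex(n,J_{t,t})$ for the lower bound are exactly the (routine) details the paper leaves implicit.
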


Keszegh \cite{Ke} showed that any 0-1 matrix $M$ that has no rows with multiple ones satisfies $\ex(n, M) = O(n 2^{\alpha(n)^t})$ for some constant $t$ that depends on $M$, where $\alpha(n)$ denotes the extremely slow-growing inverse Ackermann function. This implies a corresponding result for finite subsets $P \subseteq \mathbb{R}^2$.

\begin{cor}
If $P$ is a finite subset of $\mathbb{R}^2$ that has no rows with multiple points, then $\px(n, P) = O(n 2^{\alpha(n)^t})$ for some constant $t$ that depends on $P$. 
\end{cor}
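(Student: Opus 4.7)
The plan is to reduce this corollary immediately to the case of 0-1 matrices via Theorem \ref{mainth} and then apply Keszegh's bound \cite{Ke}. First I would translate the hypothesis on $P$ into a hypothesis on $M_P$: by the definition of $M_P$ given in the introduction, the rows of $M_P$ are in bijection with the rows of $P$, and each point of $P$ corresponds to exactly one $1$ of $M_P$ in the appropriate row. Consequently, the assumption that no row of $P$ contains multiple points is equivalent to the assumption that no row of $M_P$ contains multiple ones.

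Next I would invoke Keszegh's theorem \cite{Ke}, which says that any 0-1 matrix $M$ with no row containing multiple ones satisfies $\ex(n, M) = O(n \, 2^{\alpha(n)^t})$ for some constant $t = t(M)$. Applied to $M_P$, this yields a constant $t = t(P)$ for which $\ex(n, M_P) = O(n \, 2^{\alpha(n)^t})$. Finally, Theorem \ref{mainth} states that $\px(n, P) = \Theta(\ex(n, M_P))$, with constants that depend only on the distances between the rows and columns of $P$; since $P$ is fixed, these constants are absorbed into the $O$-notation. Chaining these two statements gives $\px(n, P) = O(n \, 2^{\alpha(n)^t})$, as required.

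There is essentially no obstacle in this argument beyond checking that the row hypothesis on $P$ transfers to $M_P$, which is immediate from the definition of $M_P$. The proof should be a one- or two-sentence citation of Theorem \ref{mainth} and \cite{Ke}.
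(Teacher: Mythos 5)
Your proposal is correct and follows exactly the route the paper intends: the corollary is stated immediately after citing Keszegh's bound for 0-1 matrices with no row containing multiple ones, and the intended (implicit) proof is precisely the observation that this row condition transfers from $P$ to $M_P$ together with Theorem \ref{mainth}. Nothing is missing.
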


Call a 0-1 matrix $M$ \emph{linear} if $\ex(n, M) = O(n)$ and \emph{nonlinear} otherwise. Similarly we call a subset $P \subseteq \mathbb{R}^2$ linear if $\px(n, P) = O(n)$ and nonlinear otherwise. Several papers on $\ex(n, M)$ have identified families of linear 0-1 matrices (e.g. permutation matrices, double permutation matrices, matrices corresponding to visibility graphs, and linear matrices with at most four ones in \cite{fulek, gduluth, gshen, MT, tardos05}), and each of these linear 0-1 matrices corresponds to linear subsets of $\mathbb{R}^2$. Other papers have identified families of nonlinear 0-1 matrices (e.g. block permutation matrices, minimally nonlinear matrices, nonlinear matrices with applications to path minimization algorithms, and nonlinear matrices with at most four ones in \cite{hesterberg, Ke, mitchell, pettie, tardos05}), and each of these nonlinear 0-1 matrices corresponds to nonlinear subsets of $\mathbb{R}^2$.

\section{Forbidden bounded infinite subsets}\label{infinite_p}

On the topic of linear subsets, we start with one of the most natural infinite forbidden subsets to consider, a line segment $P$. 

\begin{thm}\label{horizseg}
If $P$ is a horizontal line segment of length $c$ with closed endpoints, then $\px(n, P) = c n$.
\end{thm}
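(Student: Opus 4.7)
The plan is to prove matching upper and lower bounds, both of which follow fairly directly now that Lemma \ref{usefullem} is in hand. For the upper bound $\px(n, P) \le cn$, I would take any open $P$-free set $S \subseteq [0, n]^2$, apply Lemma \ref{usefullem} to obtain $\mu_1(S_y) \le c$ for every $y \in [0, n]$, and then invoke Fubini's theorem to conclude
\[
\mu_2(S) \;=\; \int_{[0,n]} \mu_1(S_y)\, d\mu_1(y) \;\le\; cn.
\]
Taking a supremum over all such $S$ yields $\px(n, P) \le cn$.

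For the lower bound $\px(n, P) \ge cn$, I would exhibit a single explicit witness rather than a limiting sequence, namely the open vertical slab $S = (0, c) \times (0, n)$, which has Lebesgue measure exactly $cn$ and sits inside $[0, n]^2$ (for $c \le n$; otherwise one can truncate and the bound is attained trivially). To show $S$ is $P$-free, I suppose for contradiction that $S$ contains $P = [a, a+c] \times \{b\}$, so that $X_P = [a, a+c]$ and $Y_P = \{b\}$. A containment would produce a map $f_X : X_P \to X_S = (0, c)$ with $\frac{f_X(x_0) - f_X(x_1)}{x_0 - x_1} \ge 1$ for $x_0 > x_1$. Applying this at the closed endpoints $x_0 = a+c$ and $x_1 = a$ forces $f_X(a+c) - f_X(a) \ge c$, but $f_X(a), f_X(a+c) \in (0, c)$ forces $f_X(a+c) - f_X(a) < c$, a contradiction. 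Hence $S$ is $P$-free, and $\px(n, P) \ge \mu_2(S) = cn$.

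The main obstacle, modest as it is, is being careful about the interplay between the closed endpoints of $P$ and the open strip $(0, c) \times (0, n)$: it is precisely because $P$ contains both of its endpoints that the slope condition can be evaluated at $a$ and $a+c$, and it is precisely because the strip is open on the left and right that the image under $f_X$ cannot span the full length $c$. This is essentially the same tension underlying Lemma \ref{usefullem}, and it is what upgrades the result from an asymptotic bound to the exact equality $\px(n, P) = cn$.
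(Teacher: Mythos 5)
Your proof is correct and follows essentially the same route as the paper: Lemma \ref{usefullem} plus Fubini for the upper bound, and the open slab $(0,c)\times(0,n)$ as the lower-bound witness. The only difference is that you spell out why the open slab avoids $P$ (the paper asserts this without the endpoint argument), which is a worthwhile addition; just note that your parenthetical about $c>n$ is not quite right, since in that degenerate regime the supremum is $n^2<cn$ rather than $cn$, an edge case the paper also implicitly excludes.
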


\begin{proof}
Suppose that $S \subseteq [0, n]^2$ is open and $P$-free. For each $y \in [0, n]$, let $S_y$ denote the set of points $(a, b) \in S$ such that $b = y$. By Lemma \ref{usefullem}, we have  $\mu_1(S_y) \le c$ for all $y \in [0, n]$.

Since $S$ is open, the function $\mu_1(S_y): [0, n] \rightarrow \mathbb{R}$ is a measurable function by Fubini's theorem, so it is Lebesgue integrable on $[0, n]$. Thus $\mu_2(S) = \int_{y \in [0, n]} \mu_1(S_y) d\mu_1(y) \le c n$, so $\px(n, P) \le cn$. On the other hand, the subset $T = (0, c) \times (0, n)$ is open and $P$-free with $\mu_2(T) =  c n$, so $\px(n, P) \ge \mu_2(T) = c n$.
\end{proof}

Note that by symmetry, the last result also applies to vertical line segments. Also, the lower bound construction in the last result shows that Corollary \ref{addedseg} and Corollary \ref{addedpt} are sharp when applied to a single point. Next we consider line segments that are neither horizontal nor vertical.

\begin{thm}\label{diagseg}
If $P$ is a line segment with closed endpoints between $(0, 0)$ and $(a, b)$ for some $a, b > 0$, then $\px(n, P) = (a+b)n - a b$.
\end{thm}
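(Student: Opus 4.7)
The plan is to prove matching lower and upper bounds of $(a+b)n - ab$, assuming $n \ge \max(a,b)$ so that $P$ fits in $[0,n]^2$.

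For the lower bound I would take $S = (0,n)^2 \setminus ([a,n] \times [b,n])$, i.e., the open square with a closed upper-right rectangle deleted. This $S$ is open (difference of an open set and a closed set), and has measure $n^2 - (n-a)(n-b) = (a+b)n - ab$. To check $P$-freeness, suppose $f_X, f_Y$ witnessed $P \subseteq S$. Since $X_S, Y_S \subseteq (0,n)$, we would have $f_X(0), f_Y(0) > 0$, hence $f_X(a) \ge f_X(0) + a > a$ and $f_Y(b) > b$, while also $f_X(a), f_Y(b) < n$. The image of the endpoint $(a,b) \in P$ would then land in $(a,n) \times (b,n) \subseteq [a,n] \times [b,n]$, which is disjoint from $S$ -- a contradiction.

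For the upper bound I would use a Fubini-style slicing argument along lines parallel to $P$. Introduce the area-preserving change of variables $(u, \ell) = (x/a,\ bx - ay)$, whose Jacobian has determinant $-1$, so $du\, d\ell = dx\, dy$. The level sets $\ell = \mathrm{const}$ are lines in direction $(a,b)$, parametrized by $u$, and a closed $u$-interval of length $1$ on such a line corresponds to a translate of $P$. Writing $S_\ell$ for the (open) cross-section at height $\ell$, the key claim is that $\mu_1(S_\ell) \le 1$ for every $\ell$ whenever $S$ is $P$-free. Granting this, $\mu_2(S) = \int \mu_1(S_\ell)\, d\ell \le \int \min(1, M(\ell))\, d\ell$, where $M(\ell)$ is the $u$-length of the line $\ell$ inside $[0,n]^2$. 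A short case analysis shows $M$ is a piecewise-linear ``tent'' that rises linearly from $0$ to the height $n/\max(a,b)$, stays flat, then falls linearly back to $0$; since $n \ge \max(a,b)$ makes the peak at least $1$, the integral of $\min(1,M)$ over $\ell \in [-an, bn]$ evaluates (as two triangular caps of area $1/2$ each plus a central rectangle of height $1$) to exactly $(a+b)n - ab$.

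The main obstacle is proving the key claim. Suppose $\mu_1(S_\ell) > 1$. Since $S_\ell$ is open in the real line, it is a countable disjoint union of open intervals, so I can pick finitely many $I_1, \ldots, I_k$ (ordered left-to-right) whose total $u$-length exceeds $1$, then choose $s_i \in (0, |I_i|)$ with $\sum_i s_i = 1$. Construct $u \colon [0,1] \to S_\ell$ piecewise: in the $i$-th time slot (of length $s_i$), let $u$ advance linearly with slope $1$ inside $I_i$, and at each slot boundary let $u$ jump upward into the next interval. The resulting $u$ satisfies $u(t) \in S_\ell$ for all $t \in [0,1]$ and $u(t_2) - u(t_1) \ge t_2 - t_1$ whenever $t_2 > t_1$. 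Then $f_X(s) = a\, u(s/a)$ on $[0,a]$ and $f_Y(s) = b\, u(s/b) - \ell/a$ on $[0,b]$ fulfill the slope-$\ge 1$ inequality from the definition of containment, and the image of $(ta, tb) \in P$ is $(au(t), bu(t) - \ell/a)$, which lies on line $\ell$ and inside $S$ -- contradicting $P$-freeness. The subtle point is that $u$ (and hence $f_X, f_Y$) may have upward jumps at slot boundaries; this is compatible with the definition because the slope-$\ge 1$ condition is a one-sided inequality that does not force continuity.
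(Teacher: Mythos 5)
Your proposal is correct and follows essentially the same route as the paper: slice $S$ along lines parallel to $P$, show each slice can contain at most one copy-length of $P$ (via a monotone map with upward jumps assembled from finitely many of the open intervals composing the slice), and integrate over the tent-shaped cross-section profile, whose two triangular caps account exactly for the $-ab$ correction. The only cosmetic difference is that you use an area-preserving shear where the paper rotates $P$ to a horizontal segment and reduces to Lemma~\ref{usefullem}; your explicit hypothesis $n \ge \max(a,b)$ is also the right one, since the stated formula fails for smaller $n$ and is implicitly assumed in the paper's proof as well.
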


\begin{proof}
Suppose that $S \subseteq [0, n]^2$ is open and $P$-free. Rotate $S$, $[0, n]^2$, and $P$ clockwise all by the same angle, until $P$ becomes a horizontal segment $R(P)$ of length $\sqrt{a^2+b^2}$ with endpoints at $(0, 0)$ and $(\sqrt{a^2+b^2}, 0)$. The result of rotating $[0, n]^2$ is denoted $R([0,n]^2)$, and the result of rotating $S$ is denoted $R(S)$. The rows of $R(S)$ are a subset of the rows of $R([0,n]^2)$, which has height $\frac{(a+b)n}{\sqrt{a^2+b^2}}$. Suppose that we also translate $R([0, n]^2)$ and $R(S)$ so that all points in both sets lie between $y = 0$ and $y = \frac{(a+b)n}{\sqrt{a^2+b^2}}$. For each $y \in [0, \frac{(a+b)n}{\sqrt{a^2+b^2}}]$, let $R(S)_y$ denote the set of points $(u, v) \in R(S)$ with $v = y$, and let $R([0,n]^2)_y$ denote the set of points $(u, v) \in R([0,n]^2)$ with $v = y$. First, we claim that $R(S)$ must avoid $R(P)$.

Suppose for contradiction that $R(S)$ contains $R(P)$. Then there exists $y$ such that $R(S)_y$ contains $R(P)$. Let $L$ be the set of $x$-coordinates in $R(S)_y$, so there is an injection $f: [0, \sqrt{a^2+b^2}] \rightarrow L$ such that $f(u) - f(v) \ge u-v$ for all $u, v \in [0, \sqrt{a^2+b^2}]$ with $u > v$. Let $Q = R^{-1}(R(S)_y)\subseteq S$ be the preimage under $R$ of $R(S)_y$. In particular let $(d, e)$ be the preimage under $R$ of $(f(0),y)$. Let $X_Q$ denote the set of $x$-coordinates of the points in $Q$, and let $Y_Q$ denote the set of $y$-coordinates of the points in $Q$.

Define $g_X: [0, a] \rightarrow X_Q$ and $g_Y: [0, b] \rightarrow Y_Q$ by $g_X(x) = d+\frac{a}{\sqrt{a^2+b^2}} (f(\frac{\sqrt{a^2+b^2}}{a}x)-f(0))$ and $g_Y(y) = e+\frac{b}{\sqrt{a^2+b^2}} (f(\frac{\sqrt{a^2+b^2}}{b}y)-f(0))$. For any $x_1, x_2 \in [0, a]$ with $x_1 > x_2$, we have $g_X(x_1) - g_X(x_2) =  \frac{a}{\sqrt{a^2+b^2}}(f(\frac{\sqrt{a^2+b^2}}{a}x_1)-f(\frac{\sqrt{a^2+b^2}}{a}x_2)) \ge \frac{a}{\sqrt{a^2+b^2}}(\frac{\sqrt{a^2+b^2}}{a}x_1 -\frac{\sqrt{a^2+b^2}}{a}x_2) = x_1-x_2$. Similarly $g_Y(y_1)-g_Y(y_2) \ge y_1-y_2$ for all $y_1, y_2 \in [0, b]$ with $y_1 > y_2$. Moreover for any point $(a t, b t)$ with $t \in [0, 1]$, we can see that $(g_X(a t), g_Y(b t)) \in Q$. This is because $f(0) \in L$, $f(t\sqrt{a^2+b^2}) \in L$ and $R^{-1}$ maps $(f(0), y)$ to $(d, e)$, so $R^{-1}$ maps $(f(t\sqrt{a^2+b^2}),y)$ to $(d+\frac{a}{\sqrt{a^2+b^2}}(f(t\sqrt{a^2+b^2})-f(0)), e+\frac{b}{\sqrt{a^2+b^2}}(f(t\sqrt{a^2+b^2})-f(0))) = (g_X(a t), g_Y(b t))$. Since $(f(t\sqrt{a^2+b^2}),y) \in R(S)_y$ and $Q = R^{-1}(R(S)_y)$, we must have $(g_X(a t), g_Y(b t)) \in Q$. So $Q$ contains $P$, implying $S$ contains $P$, which is a contradiction. Thus $R(S)$ must avoid $R(P)$.

Then $\mu_1(R(S)_y) \le \sqrt{a^2+b^2}$ for all $y \in [0,\frac{(a+b)n}{\sqrt{a^2+b^2}}]$, or else $R(S)$ would contain $R(P)$ by Lemma \ref{usefullem} with $c = \sqrt{a^2+b^2}$. 

However for all $y$ such that $\frac{(a+b)n}{\sqrt{a^2+b^2}} - \frac{a b}{\sqrt{a^2+b^2}} \le y \le  \frac{(a+b)n}{\sqrt{a^2+b^2}}$, we have $\mu_1(R(S)_y) \le \mu_1(R([0,n]^2)_y) = (\frac{(a+b)n}{\sqrt{a^2+b^2}} - y) \frac{a^2+b^2}{a b}$. Similarly if $y \le  \frac{a b}{\sqrt{a^2+b^2}}$, then $\mu_1(R(S)_y) \le \mu_1(R([0,n]^2)_y) = y \frac{a^2+b^2}{a b}$. Since $R(S)$ is open, the function $\mu_1(R(S)_y): [0, \frac{(a+b)n}{\sqrt{a^2+b^2}}] \rightarrow \mathbb{R}$ is a measurable function by Fubini's theorem, so it is Lebesgue integrable on $[0, \frac{(a+b)n}{\sqrt{a^2+b^2}}]$. Thus $\mu_2(S) = \mu_2(R(S)) = \int_{y \in [0, \frac{(a+b)n}{\sqrt{a^2+b^2}}]} \mu_1(R(S)_y) d\mu_1(y) \le (\frac{(a+b)n}{\sqrt{a^2+b^2}} - \frac{2a b}{\sqrt{a^2+b^2}})\sqrt{a^2+b^2}+\frac{a b}{\sqrt{a^2+b^2}}\sqrt{a^2+b^2} = (a+b)n - a b$.

On the other hand, the subset $T = \left\{(0, a) \times (0, n)\right\} \cup \left\{(0, n) \times (0, b)\right\}$ is open and $P$-free with $\mu_2(T) =  a n + b n - a b$, so $\px(n, P) \ge  \mu_2(T) = (a+b)n - a b$.
\end{proof}

We can use the last result to get a linear bound on $\px(n, P)$ for a much more general family of subsets $P$.

\begin{thm}\label{autoarc}
Suppose that $f: [0, a] \rightarrow \mathbb{R}$ is increasing, with $\frac{f(t)-f(s)}{t-s} \le b$ for all $s, t \in [0, a]$ with $s < t$. If $P$ is the set of points $\left\{(t,f(t)): t \in [0, a]\right\}$, then $\px(n, P) = \Theta(n)$.
\end{thm}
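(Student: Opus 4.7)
My plan is to establish matching $\Omega(n)$ and $O(n)$ bounds on $\px(n, P)$.

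For the lower bound, I will use the open vertical strip $T = (0, a) \times (0, n)$, which has measure $an$. To see that $T$ is $P$-free, suppose for contradiction that $T$ contains $P$ via some pair $(f_X, f_Y)$. Then $f_X$ must be a slope-$\ge 1$ injection from $X_P = [0, a]$ into $X_T \subseteq (0, a)$, so on the one hand $f_X(a) - f_X(0) \ge a$, but on the other hand $f_X(a), f_X(0) \in (0, a)$ forces $f_X(a) - f_X(0) < a$, a contradiction. Hence $\px(n, P) \ge an$ for all $n \ge a$.

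For the upper bound, the guiding idea is that the Lipschitz hypothesis $\frac{f(t) - f(s)}{t - s} \le b$ is exactly the condition needed to embed $P$, in the paper's slope-$\ge 1$ sense, into the closed line segment $L$ from $(0, 0)$ to $(a, ab)$, which by Theorem \ref{diagseg} satisfies $\px(n, L) = (a + ab)n - a^2 b = O(n)$. Assuming $f$ is strictly increasing, I will exhibit this containment by setting $f_X(t) = t$ and $f_Y(y) = b f^{-1}(y)$. Slope $1$ for $f_X$ is immediate; for $f_Y$, writing $s_i = f^{-1}(y_i)$ and using the Lipschitz bound in reverse yields
\[
\frac{f_Y(y_0) - f_Y(y_1)}{y_0 - y_1} \;=\; b \cdot \frac{s_0 - s_1}{f(s_0) - f(s_1)} \;\ge\; \frac{b}{b} \;=\; 1.
\]
Finally, $(f_X(t), f_Y(f(t))) = (t, bt) \in L$ for every $t \in [0, a]$, so $L$ contains $P$, and Lemma \ref{pxcontain} together with Theorem \ref{diagseg} closes the strictly increasing case.

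The main obstacle I foresee is the case in which $f$ is only weakly (rather than strictly) increasing: then $f^{-1}$ is multi-valued on each flat interval $[s_1, s_2]$ of $f$, and no single choice of $f_Y$ can simultaneously satisfy the required identity $f_Y(f(t)) = bt$ for every $t \in [s_1, s_2]$. I would bypass this in one of two ways. Either (a) perturb to $f_{\epsilon}(t) = f(t) + \epsilon t$, which is strictly increasing with slope at most $b + \epsilon$, run the containment argument for the perturbed graph, and carefully pass to the limit $\epsilon \to 0$; or (b) replace the target $L$ by $Q = L \cup H$, where $H$ is a horizontal segment long enough to cover every flat interval of $f$, and bound $\px(n, Q)$ by combining $\px(n, L) = O(n)$ with the $+cn$ overhead from Lemma \ref{addedseg}. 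Either approach keeps the upper bound linear, yielding $\px(n, P) = \Theta(n)$ in full generality.
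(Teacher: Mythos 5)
Your proposal is correct and follows essentially the same route as the paper: the vertical strip $(0,a)\times(0,n)$ for the lower bound, and for the upper bound the observation that the diagonal segment from $(0,0)$ to $(a,ab)$ contains $P$ via $g_X(t)=t$, $g_Y(f(t))=bt$, combined with Lemma \ref{pxcontain} and Theorem \ref{diagseg}. The paper implicitly reads ``increasing'' as strictly increasing (its map $g_Y(f(t))=bt$ is only well-defined then), so your main argument is exactly its proof and the flat-interval contingencies you sketch are not needed.
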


\begin{proof}
Let $Q$ be the set of points $\left\{(t, b t) \in \mathbb{R}^2: t \in [0, a] \right\}$. Then $Q$ contains $P$, as evidenced by the maps $g_X: [0, a] \rightarrow [0, a]$ and $g_Y: f([0,a]) \rightarrow [0, b a]$ defined by $g_X(t) = t$ and $g_Y(f(t)) = b t$. So $\px(n, P) \le \px(n, Q) = O(n)$ by Theorem \ref{diagseg} and Lemma \ref{pxcontain}. On the other hand, we have $\px(n, P) \ge a n$ since $(0, a) \times (0, n)$ avoids $P$, so $\px(n, P) = \Theta(n)$.
\end{proof}

\subsection{Bounded infinite $P \subseteq \mathbb{R}^2$ with $\px(n, P) = \Theta(n^2)$}\label{bin2}

In the last few results, we saw infinite subsets $P$ with $\px(n, P) = O(n)$. In Section \ref{mainresult} we showed that $\px(n, P) = \Theta(\ex(n, M_P))$ for all finite subsets $P$, and it follows from the K\H{o}vari-S\'{o}s-Tur\'{a}n theorem that for every 0-1 matrix $M$ there exists $\epsilon > 0$ such that $\ex(n, M) = O(n^{2-\epsilon})$, so for every finite subset $P$  there exists $\epsilon > 0$ such that $\px(n, P) = O(n^{2-\epsilon})$. In the next result, we show that there exist bounded and countably infinite sets of points $P$ for which $\px(n, P) = \Omega(n^{2})$. For the next proof, we use the well-known fact that $\ex(n, J_{r, r}) = \Omega(n^{2-\frac{2}{r+1}})$ for all $r \ge 2$, where the constant in the bound does not depend on $r$. 

This fact is quick to prove using probabilistic methods as in \cite{erdoskrr}. Suppose that we choose a random $n \times n$ 0-1 matrix where each entry is $1$ with probability $p$. The expected number of copies of $J_{r, r}$ is $p^{r^2} \binom{n}{r} \binom{n}{r}$. We can delete a one from each copy, so the expected number of ones in the altered 0-1 matrix is at least $p n^2 - p^{r^2} \binom{n}{r} \binom{n}{r} > \frac{p n^2}{2}$ when $r \ge 2$ and $p \le n^{-\frac{2}{r+1}}$. Thus $\ex(n, J_{r, r}) \ge \frac{1}{2} n^{2-\frac{2}{r+1}}$ for all $r \ge 2$.

\begin{thm}
If $P \subseteq \mathbb{R}^2$ is open, then $\px(n, P) = \Theta(n^2)$, where the constant in the lower bound depends on $P$.
\end{thm}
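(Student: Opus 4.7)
The upper bound $\px(n, P) \le n^2$ is immediate, so the task is the matching lower bound $\px(n, P) = \Omega(n^2)$. Since $P$ is open and nonempty, it contains some axis-aligned open square, which by translation invariance I may assume is $U = (0, L)^2$ for some $L > 0$ depending on $P$. As point sets $U \subseteq P$, so any stretching maps $(f_X, f_Y)$ witnessing $S \supseteq P$ restrict to witness $S \supseteq U$, and hence any $U$-free open set is automatically $P$-free. This gives $\px(n, P) \ge \px(n, U)$, and it suffices to prove $\px(n, U) = \Omega(n^2)$.

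The plan is to transfer the probabilistic construction for $\ex(m, J_{r, r}) = \Omega(m^{2-2/(r+1)})$ recalled just before the theorem into a continuous construction. For each large $n$, I will set $r = \lceil \log(n/L) \rceil$ and $m = \lceil r n/L \rceil$, and partition $[0, n]^2$ into an $m \times m$ grid of open square cells of side $n/m \le L/r$. Let $T \subseteq \{1, \ldots, m\}^2$ be a $J_{r, r}$-free 0-1 matrix with $|T| \ge \frac{1}{2} m^{2 - 2/(r+1)}$ (which exists by the probabilistic bound for $r \ge 2$), and let $S$ be the union of the open cells indexed by $T$; this is an open subset of $(0, n)^2$.

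To show $S$ avoids $U$, I will argue by contradiction: suppose stretching maps $f_X, f_Y : (0, L) \to \mathbb{R}$ satisfy $(f_X(x), f_Y(y)) \in S$ for all $(x, y) \in (0, L)^2$. Setting $A = f_X((0, L))$ and $B = f_Y((0, L))$, one has $A \times B \subseteq S$. The inverses $f_X^{-1}, f_Y^{-1}$ on $A, B$ are $1$-Lipschitz onto $(0, L)$ by the stretching condition, so $\mu_1(A), \mu_1(B) \ge L$. Let $I, J \subseteq \{1, \ldots, m\}$ be the sets of column and row indices of cells met by $A$ and $B$ respectively. Then $\mu_1(A) \le |I| \cdot (n/m) \le |I| \cdot L/r$ forces $|I| \ge r$, and analogously $|J| \ge r$. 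For every $(i, j) \in I \times J$, picking $x_0 \in A$ in column $i$ and $y_0 \in B$ in row $j$ yields a point $(x_0, y_0) \in (A \times B) \cap (\text{cell }(i,j)) \subseteq S$, which forces $(i, j) \in T$. Hence $T$ contains the $r \times r$ all-ones submatrix $I \times J$, contradicting $J_{r, r}$-freeness.

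Finally, $\mu_2(S) = |T|(n/m)^2 \ge \frac{1}{2} n^2 m^{-2/(r+1)}$, and the choices $r \asymp \log(n/L)$ and $m \asymp rn/L$ give $\log m = O(r)$, so $m^{-2/(r+1)}$ is bounded below by a positive constant depending only on $L$. This yields $\mu_2(S) = \Omega(n^2)$ for $n$ sufficiently large, and the continuity and positivity of $\px(n, U)$ from Section \ref{superadd} handle the remaining small values of $n$. The step I expect to be most delicate is the cell-counting: rigorously obtaining $\mu_1(A) \ge L$ despite $A$ potentially being a complicated union of intervals arising from jumps of $f_X$, and then converting this into $|I| \ge r$ via the cell-width bound $n/m \le L/r$.
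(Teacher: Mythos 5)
Your proof is correct, and it takes a genuinely different route from the paper's in its execution, though both arguments share the same engine: a forbidden open square behaves like a forbidden all-ones pattern $J_{r,r}$ with $r$ allowed to grow like $\log n$, combined with the probabilistic bound $\ex(m, J_{r,r}) \ge \frac{1}{2} m^{2-\frac{2}{r+1}}$ whose constant is independent of $r$. The paper proves the stronger Theorem \ref{open2}: it reduces to the \emph{finite} rational grids $Q_r \subseteq P$, and then invokes the general machinery of Theorem \ref{mainth} together with the dilation Lemma \ref{ps_blank} to convert the matrix lower bound into a lower bound on $\px$. You instead reduce to a full open square $U = (0,L)^2 \subseteq P$ via Lemma \ref{pxcontain} and build the extremal set directly --- blowing up a $J_{r,r}$-free matrix into a union of open cells --- and verify $U$-avoidance by hand: the $1$-Lipschitz inverse of a stretching map forces $\mu_1^*(f_X((0,L))) \ge L$, hence the image meets at least $r$ cell-columns and $r$ cell-rows, producing a forbidden $r \times r$ all-ones submatrix. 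This buys a self-contained argument that bypasses Theorem \ref{mainth} and Lemma \ref{ps_blank} entirely; what it gives up is the stronger conclusion, since the measure argument on $f_X((0,L))$ genuinely uses that $U$ has positive measure and cannot see the countable set $Q_P$, whereas the paper's finite-grid reduction yields $\px(n, Q_P) = \Theta(n^2)$ as well. Two minor points to tighten: the step $\mu_1(A) \ge L$ is cleanest if you either work with outer measure throughout or pull the cover back, writing $(0,L) = \bigcup_{i \in I} f_X^{-1}(C_i \cap A)$ with each piece of diameter at most $n/m$ by the Lipschitz bound, which avoids any measurability question about $A$; and you should note (as the paper implicitly does) that $P$ is assumed nonempty.
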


In order to prove this theorem, we will prove a stronger fact. Given an open subset $P \subseteq \mathbb{R}^2$, let $Q_P$ be the set of points in $P$ with rational coordinates. In the following proof, all logarithms are base 2.

\begin{thm}\label{open2}
If $P \subseteq \mathbb{R}^2$ is open, then $\px(n, Q_P) = \Theta(n^2)$, where the constant in the lower bound depends on $P$.
\end{thm}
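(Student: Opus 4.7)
The upper bound $\px(n, Q_P) \le n^2$ is trivial since $Q_P \subseteq [0,n]^2$, so the content is to show $\px(n, Q_P) = \Omega(n^2)$. First I would dispose of the case where $P$ is unbounded: since $P$ is open, $Q_P$ is dense in $P$, so $Q_P$ unbounded forces $X_{Q_P}$ or $Y_{Q_P}$ to be unbounded. Then no expansive map from $X_{Q_P}$ into $[0, n]$ can exist, so every open $S \subseteq [0, n]^2$ vacuously avoids $Q_P$ and $\px(n, Q_P) = n^2$. Hence I may assume $P$ is bounded, and, using openness, fix a rational-cornered open rectangle $B = (b_1, b_2) \times (d_1, d_2) \subseteq P$.

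For each integer $r \ge 2$, I would then pick a rational $r \times r$ uniform grid $G_r \subseteq B$ with row and column spacings approximately $\delta_r = \min(b_2-b_1, d_2-d_1)/(r+1)$. Since $G_r \subseteq Q_P$, Lemma \ref{pxcontain} (via the identity embedding) gives $\px(n, Q_P) \ge \px(n, G_r)$, and by construction $M_{G_r} = J_{r, r}$. Combining the explicit construction from the proof of Theorem \ref{lowerth} with the probabilistic KST lower bound $\ex(m, J_{r,r}) \ge \tfrac{1}{2} m^{2 - 2/(r+1)}$ (stated just before this theorem, with constant independent of $r$), one obtains an open $G_r$-free subset of $[0, n]^2$ of measure at least $\tfrac{1}{2} c_r^2 (n/c_r)^{2 - 2/(r+1)} = \tfrac{1}{2} c_r^{2/(r+1)} n^{2 - 2/(r+1)}$, where $c_r = \Theta(\delta_r) = \Theta(1/r)$ is the side length of the open squares used in that construction.

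Finally, I would let $r = r(n)$ grow with $n$; the choice $r = \lceil \log_2 n \rceil$ suffices. Then $\ln c_r^{2/(r+1)} = O(\log r / r) \to 0$, so $c_r^{2/(r+1)} \to 1$, while $n^{-2/(r+1)} \to 1/4$, and the resulting lower bound becomes $\Omega(n^2)$, proving $\px(n, Q_P) = \Omega(n^2)$. The main obstacle I expect is the bookkeeping in this last step: the constants in Theorem \ref{lowerth} (which depend on the spacings $\delta_r$, shrinking like $1/r$) and in the KST lower bound must combine delicately, and the crucial feature that makes the approach work is the $r$-independence of the leading constant in the KST lower bound---without that, letting $r \to \infty$ with $n$ would not preserve the $\Omega(n^2)$ bound.
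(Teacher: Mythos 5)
Your proposal is correct and follows essentially the same strategy as the paper: embed a rational $r \times r$ grid inside an open ball in $P$, apply the lower-bound construction for $J_{r,r}$ together with the probabilistic K\H{o}vari--S\'{o}s--Tur\'{a}n lower bound (whose leading constant is independent of $r$), and let $r$ grow like $\log n$ so that both $n^{-2/(r+1)}$ and the spacing-dependent factor tend to constants. The only difference is bookkeeping: the paper routes the constant through the integer grid $H_r$ and the dilation Lemma \ref{ps_blank}, losing a $(\log m)^2$ factor that it recovers by reparametrizing $m = n\log n$, whereas you track the factor $c_r^{2/(r+1)} \to 1$ directly through the construction of Theorem \ref{lowerth}, which is slightly cleaner but not a genuinely different argument.
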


\begin{proof}
The upper bound is trivial. For the lower bound, it suffices to show that there exists a subset $Q \subseteq Q_P$ for which $\px(n, Q) =  \Omega(n^{2})$, where the constant in the bound depends on $P$. Let $p$ be an arbitrary element of $P$. Since $P$ is open, there exists some $s > 0$ for which there is an open ball $B(p, s)$ centered at $p$ of radius $s$ with $B(p, s) \subseteq P$. Since $s > 0$, there exists some rational number $0 < c < 1$ such that $B(p, s)$ contains a closed square of sidelength $c$ with vertices at rational coordinates. Without loss of generality, we may assume that this square is $[0, c]^2$. 

Let $Q_r = \left\{\frac{i c}{r}: i = 1, 2, \dots, r \right\}^2$ for $r \ge 2$, so that $|Q_r| = r^2$. If $H_r = \left\{1, 2, \dots, r \right\}^2$ for $r \ge 2$, then Lemma \ref{ps_blank} implies that $\px(m, H_r) \le (\lceil \frac{r}{c} \rceil+1)^2 \px(\lceil \frac{\lceil \frac{m r}{c} \rceil}{\lceil \frac{r}{c} \rceil+1} \rceil \frac{c}{r}, Q_r)$. By Theorem \ref{mainth} and the cited lower bound on $\ex(n, J_{r, r})$ in the paragraph before this proof, $\px(m, H_r) = \Omega(m^{2-\frac{2}{r+1}})$ for all $r \ge 2$, where the constant in the bound does not depend on $r$. Thus $\px(\lceil \frac{\lceil \frac{m r}{c} \rceil}{\lceil \frac{r}{c} \rceil+1} \rceil \frac{c}{r}, Q_r) =  \Omega(\frac{m^{2-\frac{2}{r+1}}}{(\lceil \frac{r}{c} \rceil+1)^2})$ for all $r \ge 2$. If $m \ge 4$ is an integer and $r = \lceil \log{m} \rceil$, we obtain $\px(\frac{m}{\log{m}}, Q_{r}) \ge \px(\frac{m c}{r}, Q_{r}) \ge \px(\lceil \frac{\lceil \frac{m r}{c} \rceil}{\lceil \frac{r}{c} \rceil+1} \rceil \frac{c}{r}, Q_{r}) =  \Omega(\frac{m^{2-\frac{2}{r+1}}}{(\lceil \frac{r}{c} \rceil+1)^2}) = \Omega(\frac{m^2}{(\log{m})^2})$. Let $n$ satisfy $m = n \log{n}$ for some integer $m \ge 4$. Thus we obtain $\px(n, Q_P) \ge \px(\frac{n \log{n}}{\log(n \log{n})}, Q_P) \ge  \px(\frac{n \log{n}}{\log(n \log{n})}, Q_{\lceil \log(n \log{n}) \rceil}) =  \Omega(n^2)$.
\end{proof}

\subsection{Strengthening the K\H{o}vari-S\'{o}s-Tur\'{a}n theorem}\label{skst}

For each $t \ge 2$, let $P_{s, t, c}$ denote the set of points $\left\{(x, y): x \in [0, s] \text{ and } y \in \left\{c, 2c, \dots, t c\right\}  \right\}$. For example, $P_{s, 2, c}$ is a set of points that looks like an equal sign ($=$) and $P_{s, 3, c}$ is a set of points that looks like an equivalence symbol ($\equiv$).

We start by determining $\px(n, P_{s, 2, c})$ up to a constant factor that depends on $c$ before we prove a general upper bound on $\px(n, P_{s, t, c})$. The integrals in the next two proofs are Lebesgue integrals. For the lower bound in the next proof, we use the result of F\"{u}redi \cite{fur2} that $\ex(n, J_{s, 2}) = \Theta(s^{\frac{1}{2}}n^{\frac{3}{2}})$.

\begin{thm}\label{t=2}
For all $s > 0$, $\px(n, P_{s, 2, c}) = \Theta(s^{\frac{1}{2}}n^{\frac{3}{2}})$.
\end{thm}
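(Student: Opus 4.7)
The plan is to prove matching upper and lower bounds. For the \emph{lower bound}, let $k = \lfloor s \rfloor + 1$ and let $F \subseteq \mathbb{R}^2$ be the finite grid with rows at $y$-coordinates $0$ and $c$ and columns at $x$-coordinates $0, 1, \ldots, k-1$. Taking $f_X(i) = i$ (which maps into $[0, s]$ because $k - 1 \le s$) and $f_Y(0) = c$, $f_Y(c) = 2c$ shows $P_{s, 2, c}$ contains $F$, so Lemma \ref{pxcontain} gives $\px(n, P_{s, 2, c}) \ge \px(n, F)$. Since $M_F = J_{2, k}$, Theorem \ref{mainth} together with F\"uredi's bound $\ex(n, J_{s, 2}) = \Theta(s^{1/2} n^{3/2})$ and the symmetry $\ex(n, J_{s,t}) = \ex(n, J_{t,s})$ yields $\px(n, F) = \Theta(s^{1/2} n^{3/2})$, with implicit constants depending on $c$.

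The \emph{upper bound} proceeds in two steps. Fix an open $P_{s, 2, c}$-free subset $S \subseteq [0, n]^2$, and for each $y$ write $S^{(y)} = \{x : (x, y) \in S\}$, which is open in $\mathbb{R}$ since $S$ is open. \emph{Step 1 (slice lemma):} whenever $0 \le y_1 < y_2 \le n$ satisfy $y_2 - y_1 \ge c$, $\mu_1(S^{(y_1)} \cap S^{(y_2)}) \le s$. Otherwise the open set $S^{(y_1)} \cap S^{(y_2)}$ has measure exceeding $s$ and therefore contains, via the closed-interval trick from the proof of Lemma \ref{usefullem}, disjoint closed subintervals $C_1, \ldots, C_N$ with $\sum_j |C_j| > s$; gluing these end-to-end as in that proof produces a map $f_X : [0, s] \to S^{(y_1)} \cap S^{(y_2)}$ with slope $\ge 1$. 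Combined with $f_Y(c) = y_1$, $f_Y(2c) = y_2$ (whose slope is $(y_2 - y_1)/c \ge 1$), the pair $(f_X, f_Y)$ embeds $P_{s, 2, c}$ into $S$, a contradiction.

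\emph{Step 2 (K\H{o}vari-S\'os-Tur\'an double count):} writing $f(x) = \mu_1(T_x)$ with $T_x = \{y : (x, y) \in S\}$, Fubini's theorem and the slice lemma give
\[
\int_0^n \mu_2\bigl(\{(y_1, y_2) \in T_x^2 : y_1 < y_2,\, y_2 - y_1 \ge c\}\bigr)\, dx = \iint_{\substack{0 \le y_1 < y_2 \le n \\ y_2 - y_1 \ge c}} \mu_1(S^{(y_1)} \cap S^{(y_2)})\, dy_1\, dy_2 \le \tfrac{1}{2} s n^2.
\]
For each $x$, the inner measure is at least $f(x)^2/2 - c f(x)$, because within $T_x \times T_x$ the ordered pairs with $y_2 - y_1 < c$ have total measure at most $c f(x)$. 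Hence $\int_0^n f(x)^2\, dx \le s n^2 + 2 c\, \mu_2(S)$. Jensen's inequality applied to the convex function $z \mapsto z^2$ on the probability space $([0, n], dx/n)$ yields $\int_0^n f(x)^2\, dx \ge \mu_2(S)^2 / n$, so $\mu_2(S)^2 \le s n^3 + 2 c n\, \mu_2(S)$; the quadratic formula then gives $\mu_2(S) \le c n + \sqrt{c^2 n^2 + s n^3} = O(s^{1/2} n^{3/2})$ with constant depending on $c$.

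The main obstacle is Step 1: it is precisely openness of $S$ that makes $S^{(y_1)} \cap S^{(y_2)}$ an open subset of $\mathbb{R}$, expressible as a countable disjoint union of open intervals, which is what lets the closed-interval trick of Lemma \ref{usefullem} apply to both slices simultaneously and produce a genuine embedding of $P_{s, 2, c}$. Once Step 1 is in hand, Step 2 is a routine measure-theoretic adaptation of the classical cherry-counting argument.
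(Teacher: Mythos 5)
Your proof is correct, and its skeleton --- the slice lemma derived from Lemma \ref{usefullem}, a Fubini double count over triples $(x, y_1, y_2)$, and Jensen's inequality --- is the same as the paper's, which phrases the double count as computing the volume of an explicit open set $S' \subseteq \mathbb{R}^3$ in two ways. Where you genuinely diverge, and in fact simplify, is the per-column estimate: the paper proves $\mu_2(S'_x) \ge \frac{(m_x - c)^2}{2}$ by reparametrizing the column fiber through the cumulative-measure functions $\mu_1(Q_{x,y})$, so that the gap constraint $y - z > c$ becomes a constraint on measures and the fiber integral becomes an exact simplex volume (this requires the countability/rearrangement argument about values not attained by $\mu_1(Q_{x,r})$); you instead observe that the ordered pairs violating $y_2 - y_1 \ge c$ have measure at most $c f(x)$, subtract them from $f(x)^2/2$, and absorb the resulting linear term into a quadratic inequality for $\mu_2(S)$ at the end. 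The two per-column bounds differ only by the additive constant $c^2/2$, so both routes land on $\mu_2(S) = O(cn + s^{1/2} n^{3/2})$, but yours avoids the rearrangement machinery entirely; the price is the slightly less tidy final step (solving a quadratic rather than reading off Jensen applied to $\max(z - c, 0)^2/2$ directly). Your lower bound is the paper's verbatim: containment of a $2 \times \Theta(s)$ grid, Lemma \ref{pxcontain}, Theorem \ref{mainth}, and F\"uredi's bound (with the harmless caveat, shared by the paper, that for $s < 1$ one should take a $2 \times 2$ grid with column spacing $s$ and let the constant depend on $s$).
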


\begin{proof}
The lower bound follows from Lemma \ref{pxcontain}, Theorem \ref{mainth}, and the result of F\"{u}redi cited in the paragraph before this proof. For the upper bound, let $S$ be an open $P_{s, 2, c}$-free subset of $[0,n]^2$. Let $S'$ be the $3$-dimensional set of points of the form $(x, y, z)$ for which $(x, y) \in S$ and $(x, z) \in S$ and $y - z > c$. First, we note that $S'$ is open. To see why this is true, define $T_1 = \left\{(x, y, z) \in \mathbb{R}^3: (x, y) \in S \text{ and } (x, z) \in S \right\}$ and define $T_2 = \left\{(x, y, z) \in \mathbb{R}^3: y - z > c \right\}$. Clearly $T_2$ is an open subset of $\mathbb{R}^3$. Since $S$ is open, for every point $(x, y) \in S$ there exists $r > 0$ such that the open ball of radius $r$ centered at $(x, y)$ is a subset of $S$, i.e. $B((x, y), r) \subseteq S$. For each $(x, y, z) \in T_1$, we have both $(x, y) \in S$ and $(x, z) \in S$, so there exist $r_y, r_z > 0$ such that $B((x, y), r_y) \subseteq S$ and $B((x, z), r_z) \subseteq S$. For every $(x',y',z') \in B((x, y, z), \min(r_y, r_z))$, we have $(x'-x)^2+(y'-y)^2+(z'-z)^2 < \min(r_y^2, r_z^2)$, so $(x'-x)^2+(y'-y)^2 < r_y^2$ and $(x'-x)^2+(z'-z)^2 < r_z^2$, which implies that $(x', y') \in S$ and $(x',z') \in S$, so $(x',y',z') \in T_1$. Thus $B((x, y, z), \min(r_y, r_z)) \subseteq T_1$, so $T_1$ is open. Hence $S'$ is also open, since $S' = T_1 \cap T_2$.

The points $(x, y, z) \in S'$ must satisfy $0 < z < n-c$ and $z+c < y < n$, so the projection of these points onto the $y-z$ plane is the interior of a triangle of area $\frac{(n-c)^2}{2}$. For each fixed $y$ and $z$ with $y, z \in (0, n)$, let $S''_{y, z}$ denote the set of points $(a, b, d) \in S'$ with $b = y$ and $d = z$. Then $\mu_1(S''_{y z}) \le s$ by Lemma \ref{usefullem} for all  $y, z \in (0, n)$, or else $S$ would contain $P_{s, 2, c}$. Since $S'$ is open, by Fubini's theorem the function $\mu_1(S''_{y z}): [0, n]^2 \rightarrow \mathbb{R}$ is a measurable function, so it is Lebesgue integrable on $[0, n]^2$. Thus $\mu_3(S') = \int_{z \in (0, n)} \int_{y \in (0, n)} \mu_1(S''_{y z}) d\mu_1(y) d\mu_1(z)  \le \frac{s(n-c)^2}{2}$.

For all $x \in [0, n]$, let $S'_x$ denote the set of points $(a, b, d) \in S'$ with $a = x$, let $S_x$ denote the set of points $(a, b) \in S$ with $a = x$, and let $m_x = \mu_1(S_x)$. Since $S$ is an open set, by Fubini's theorem the function $m_x: [0, n] \rightarrow \mathbb{R}$ is a measurable function. Therefore $\frac{\max(m_x-c,0)^2}{2}$ is also a measurable function. Thus $\frac{\max(m_x-c,0)^2}{2}$ is Lebesgue integrable on $[0, n]$. 

For all $x$ with $m_x \ge c$, we must have $\mu_2(S'_x) \ge \frac{(m_x-c)^2}{2}$. To see why this is true, suppose that $m_x \ge c$. For each $(x, y) \in S_x$, let $Q_{x, y} = \left\{(x, r): (x, r) \in S \text{ and } r \le y \right\}$. Note that 

\begin{align*}
\left\{(y, z): (x, y) \in S \text{ and } (x, z) \in S \text{ and } \mu_1(Q_{x, z})+ c < \mu_1(Q_{x,y}) \right\}  \subseteq \\
\left\{(y, z): (x, y) \in S \text{ and } (x, z) \in S \text{ and } z+c < y  \right\}.
\end{align*}

\noindent Then we have $\mu_2(S'_x) =$\\ 

\begin{align*}
\int_{z \in (0, n-c)} \int_{y \in (z+c, n)} 1_{(x,z) \in S} 1_{(x,y) \in S} d \mu_1(y) d \mu_1(z) \ge \\
\int_{z \in (0, n-c)} \int_{y \in (z+c, n)} 1_{(x,z) \in S} 1_{(x,y) \in S} 1_{\mu_1(Q_{x, z})+ c < \mu_1(Q_{x,y})} d \mu_1(y) d \mu_1(z). 
\end{align*}

Given any $(x, y) \in S$, note that $ \mu_1(Q_{x, y}) \in (0, m_x)$. Since $S$ is open, we can write $S_x$ as a countable union of intervals $I_j$ with open endpoints. Thus there are only countably many $b \in (0, m_x)$ for which there does not exist $r$ such that $\mu_1(Q_{x,r}) = b$. Since countable sets have measure zero, the last integral is equal to 

\begin{align*}
\int_{z' \in (0, m_x-c)} \int_{y' \in (z'+c,m_x)} 1 d \mu_1(y') d \mu_1(z') \\
= \frac{(m_x-c)^2}{2}
\end{align*}

\noindent Thus by Fubini's theorem we have $\mu_3(S') = \int_{x \in [0, n]} \mu_2(S'_x) d\mu_1(x) \ge \int_{x \in [0, n]} \frac{\max(m_x-c,0)^2}{2} d\mu_1(x)$.

Let $f: \mathbb{R} \rightarrow \mathbb{R}$ be defined by $f(t) = \frac{\max(t-c,0)^2}{2}$, so $f(m_x) = \frac{\max(m_x-c,0)^2}{2}$ and $f$ is convex. Thus we can rewrite the last inequality in the last paragraph as $\mu_3(S') \ge \int_{x \in [0,n]} f(m_x) d\mu_1(x)$. Combining this with the inequality at the end of the second paragraph, we have $\int_{x \in [0, n]} f(m_x) d\mu_1(x) \le \frac{s(n-c)^2}{2}$. By Jensen's inequality, we obtain $n f(\frac{\mu_2(S)}{n}) \le \frac{s(n-c)^2}{2}$, so $n \frac{(\frac{\mu_2(S)}{n}-c)^2}{2} \le \frac{s(n-c)^2}{2}$ or else $\frac{\mu_2(S)}{n} \le c$. Thus $\mu_2(S) = O(s^{\frac{1}{2}}n^{\frac{3}{2}})$.
\end{proof}

In order to generalize the last theorem, we prove a lemma where we bound the volume of a $t$-dimensional solid that will be used in the main proof.

\begin{lem}\label{simplex}
The set $X$ of points $(y_1, \dots, y_t)$ for which $0 < y_t < n-(t-1)c$ and $y_{i+1}+c < y_i < n-(i-1)c$ for all $i = 1, \dots, t-1$ has $\mu_t(X) = \Theta(n^t)$, where the constants in the bound depend on $t$. In particular, $\mu_t(X) \ge (\frac{n}{t}-c)^t$ for $n \ge c t$.
\end{lem}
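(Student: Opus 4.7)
The plan is to reduce $X$ to a standard open simplex by a measure-preserving change of variables, compute the simplex volume directly, and then convert that into the stated explicit bound $(n/t-c)^t$.

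First I would perform the substitution $z_i = y_i - (t-i)c$ for $i = 1, \dots, t$. Since each coordinate is translated independently, this is a volume-preserving bijection of $\mathbb{R}^t$ onto itself. Tracking each constraint: $y_t > 0$ becomes $z_t > 0$; the chain condition $y_{i+1}+c < y_i$ becomes $z_{i+1}+(t-i-1)c+c < z_i+(t-i)c$, i.e.\ $z_{i+1} < z_i$, so the offsets cancel exactly; and the upper bound $y_i < n-(i-1)c$ becomes $z_i < n-(i-1)c-(t-i)c = n-(t-1)c$. Thus $X$ is carried bijectively onto the standard descending open simplex $X' = \{(z_1,\dots,z_t) : 0 < z_t < z_{t-1} < \dots < z_1 < n - (t-1)c\}$, and $\mu_t(X) = \mu_t(X')$.

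Next I would compute $\mu_t(X') = (n-(t-1)c)^t/t!$. This follows by the standard induction on $t$ (integrating out $z_1$ first and using $\int_0^L z_1^{t-1}/(t-1)!\, dz_1 = L^t/t!$ with $L = n-(t-1)c$), or equivalently by observing that $X'$ is one of the $t!$ congruent orderings of an open $t$-cube of side $L$ under permutation of coordinates. The upper bound $\mu_t(X) \le n^t$ is immediate since $X \subseteq (0, n)^t$, so $\mu_t(X) = O(n^t)$ with the implied constant depending only on $t$ and $c$.

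For the stated explicit lower bound, when $n \ge ct$ we have $n-(t-1)c \ge n - tc = t(n/t - c) \ge 0$, hence $(n-(t-1)c)^t \ge t^t(n/t-c)^t$. Since $t! \le t^t$, this yields $\mu_t(X) = (n-(t-1)c)^t/t! \ge (n/t-c)^t$, giving the claimed explicit $\Omega(n^t)$ bound and completing the $\Theta(n^t)$ estimate. There is essentially no obstacle here; the only subtle point is choosing the shifts $(t-i)c$ so that the chain constraints become a clean strict ordering, after which the argument is a textbook simplex volume computation used to bound the projection region in the generalized K\H{o}vari--S\'{o}s--Tur\'{a}n argument that presumably follows.
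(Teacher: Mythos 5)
Your proof is correct, but it takes a different route from the paper. You compute the measure of $X$ exactly: the translation $z_i = y_i - (t-i)c$ carries $X$ bijectively onto the descending open simplex $\{0 < z_t < \dots < z_1 < n-(t-1)c\}$ (your verification that the offsets cancel in the chain constraints and that all upper bounds collapse to $z_i < n-(t-1)c$ checks out in both directions), giving $\mu_t(X) = (n-(t-1)c)^t/t!$ when $n \ge (t-1)c$, and then $t! \le t^t$ together with $n-(t-1)c \ge t(\frac{n}{t}-c) \ge 0$ yields the stated bound $(\frac{n}{t}-c)^t$. The paper does something more elementary: it simply exhibits an explicit axis-parallel box inside $X$, namely the product of the intervals $y_{t+1-i} \in (\frac{n}{t}(i-1), \frac{n}{t}i - c)$, each of length $\frac{n}{t}-c$, whose volume is exactly $(\frac{n}{t}-c)^t$; the upper bound $\mu_t(X) \le n^t$ is obtained the same way in both arguments. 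Your approach buys the exact value of $\mu_t(X)$ (which is strictly more than the lemma asks for), at the cost of the change-of-variables and simplex-volume computation; the paper's box construction gets the needed inequality with no computation beyond checking the box sits inside $X$. One small caveat for your write-up: the identity $\mu_t(X) = (n-(t-1)c)^t/t!$ should be stated only for $n \ge (t-1)c$ (otherwise $X$ is empty and, for even $t$, the formula would wrongly give a positive value), though this does not affect the regime $n \ge ct$ where the explicit lower bound is claimed.
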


\begin{proof}
$X$ is contained in the set of points $(y_1, \dots, y_t)$ with $0 \le y_i \le n$ for all $i$, which has volume $n^t$, so $\mu_t(X) \le n^t$. For a lower bound, suppose that $n > c t$ and define a set of points $X'$ consisting of the points $(y_1, \dots, y_t)$ such that $y_{t+1-i} \in (\frac{n}{t}(i-1),\frac{n}{t}i - c)$ for each $i = 1, \dots, t$. Then $X' \subseteq X$, and $\mu_t(X')  = (\frac{n}{t}-c)^t$. Thus $\mu_t(X) \ge \mu_t(X') = \Omega(n^t)$.
\end{proof}

While the last theorem covered subsets that look like the equal sign ($=$), the next theorem covers subsets that look like equivalence symbols ($\equiv$) and more generally, vertical stacks of any finite number of horizontal segments of the same length with endpoints in the same left and right columns. 

\begin{thm}\label{kstcont}
For all $s > 0$ and fixed $t \ge 2$, $\px(n, P_{s, t, c}) = O(s^{\frac{1}{t}} n^{2-\frac{1}{t}})$, where the constants in the bound depend on $t$ and $c$.
\end{thm}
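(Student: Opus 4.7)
The plan is to carry out a $t$-dimensional generalization of the double-integration argument of Theorem \ref{t=2}. Let $S \subseteq [0, n]^2$ be open and $P_{s, t, c}$-free, and introduce the $(t+1)$-dimensional auxiliary set
\[
S' = \{(x, y_1, \ldots, y_t) : (x, y_i) \in S \text{ for each } i, \text{ and } y_i > y_{i+1} + c \text{ for } i = 1, \ldots, t-1\}.
\]
First I would check that $S'$ is open by writing it as $T_1 \cap T_2$, where $T_1 = \{(x, y_1, \ldots, y_t) : (x, y_i) \in S \text{ for all } i\}$ and $T_2 = \{y_1 > y_2 + c > \cdots > y_t + (t-1)c\}$ is an open polyhedron; openness of $T_1$ follows from the same ball argument used in the $t = 2$ proof, applied coordinate by coordinate. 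I would then bound $\mu_{t+1}(S')$ both above and below and combine via Jensen's inequality.

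For the upper bound I would fix $(y_1, \ldots, y_t)$ in the region described by Lemma \ref{simplex} and argue that the slice $X = \{x : (x, y_i) \in S \text{ for all } i\}$ has $\mu_1(X) \le s$. This slice is a finite intersection of open $x$-sections of $S$, hence open. If $\mu_1(X) > s$, the closed-interval-packing trick of Lemma \ref{usefullem} provides closed subintervals of $X$ of total length exceeding $s$ and a $1$-Lipschitz-expanding map $f_X : [0, s] \to X$; pairing it with $f_Y(ic) = y_{t+1-i}$, which is $1$-Lipschitz-expanding by the spacing condition $y_i > y_{i+1} + c$, witnesses a copy of $P_{s, t, c}$ inside $S$, a contradiction. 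Fubini combined with $\mu_t$ of the simplex-like region being $O(n^t)$ then yields $\mu_{t+1}(S') = O(s n^t)$, with constant depending only on $t$.

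For the lower bound I would use the quantile trick from Theorem \ref{t=2}. Fix $x$ and let $m_x = \mu_1(S_x)$ and $\phi_x(y) = \mu_1(Q_{x, y})$; then $\phi_x$ is non-decreasing, $1$-Lipschitz on $[0, n]$ (its derivative equals $\mathbf{1}_{y \in S_x}$), and sends $S_x$ onto $(0, m_x)$ modulo a countable set. The $1$-Lipschitz property yields the implication $\phi_x(y_i) > \phi_x(y_{i+1}) + c \Rightarrow y_i > y_{i+1} + c$ whenever $(x, y_i) \in S$, so repeating the substitution in the $t = 2$ proof coordinate-by-coordinate gives
\[
\mu_t(S'_x) \ge \mu_t\bigl(\{(y'_1, \ldots, y'_t) \in (0, m_x)^t : y'_i > y'_{i+1} + c \text{ for all } i\}\bigr) \ge \max\!\left(\tfrac{m_x}{t} - c,\, 0\right)^{t},
\]
where the last inequality is Lemma \ref{simplex} applied with $m_x$ in place of $n$.

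Finally I would combine the two bounds. The function $f(u) = \max(u/t - c, 0)^t$ is convex, and $\int_0^n m_x \, d\mu_1(x) = \mu_2(S)$ by Fubini, so Jensen's inequality yields
\[
O(sn^t) \ge \mu_{t+1}(S') \ge \int_0^n f(m_x) \, d\mu_1(x) \ge n\, f\!\left(\tfrac{\mu_2(S)}{n}\right) = n \max\!\left(\tfrac{\mu_2(S)}{tn} - c,\, 0\right)^{t}.
\]
Solving for $\mu_2(S)$ gives $\mu_2(S) \le ctn + O(s^{1/t} n^{2 - 1/t})$, and the $ctn$ term is absorbed into $O(s^{1/t} n^{2 - 1/t})$ either directly (when $s$ is bounded away from $0$) or via the trivial estimate $\px(n, P_{s, t, c}) \le sn$ coming from Theorem \ref{horizseg}, Lemma \ref{pxcontain}, and the fact that $P_{s, t, c}$ contains a horizontal segment of length $s$ (note that $sn \le s^{1/t} n^{2 - 1/t}$ whenever $s \le n$). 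I expect the main obstacle to be a clean justification of the multivariate quantile substitution in paragraph three: specifically, verifying that the exceptional set on which $\phi_x$ fails to be a bijection from $S_x$ to $(0, m_x)$ has product measure zero in $(0, m_x)^t$, so that the coordinate-wise extension of the $t = 2$ change of variables is valid and preserves the desired Lebesgue-measure identity.
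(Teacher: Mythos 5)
Your proposal follows essentially the same route as the paper's proof: the same auxiliary $(t+1)$-dimensional set $S' = T_1 \cap T_2$, the same upper bound $\mu_{t+1}(S') = O(sn^t)$ via Lemma \ref{usefullem} and Fubini, the same quantile substitution $y \mapsto \mu_1(Q_{x,y})$ combined with Lemma \ref{simplex} for the lower bound $\mu_t(S'_x) \ge \max(m_x/t - c, 0)^t$, and the same application of Jensen's inequality to conclude. The argument is correct, and your explicit handling of the residual $ctn$ term (via $\px(n, P_{s,t,c}) \le sn$ when $s \le n$) is a small refinement of a step the paper treats more tersely.
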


\begin{proof}
Let $S$ be an open $P_{s, t, c}$-free subset of $[0,n]^2$. Let $S'$ be the $(t+1)$-dimensional set of points of the form $(x, y_1, y_2, \dots y_t)$ for which $(x, y_i) \in S$ for each $i = 1, 2, \dots, t$ and $y_i - y_{i+1} > c$ for each $i = 1, \dots, t-1$. First, we note that $S'$ is open. To see why this is true, as in Theorem \ref{t=2} define $T_1 = \left\{(x, y_1, y_2, \dots y_t) \in \mathbb{R}^{t+1}: (x, y_i) \in S \text{ for all } i = 1, 2, \dots, t \right\}$. Define $T_2 = \left\{(x, y_1, y_2, \dots y_t) \in \mathbb{R}^{t+1}: y_i - y_{i+1} > c \text{ for all } i = 1, 2, \dots, t-1 \right\}$. Clearly $T_2$ is an open subset of $\mathbb{R}^{t+1}$. Since $S$ is open, for every point $(x, y) \in S$ there exists $r > 0$ such that the open ball of radius $r$ centered at $(x, y)$ is a subset of $S$, i.e. $B((x, y), r) \subseteq S$. For each $(x,  y_1, y_2, \dots y_t) \in T_1$, we have $(x, y_i) \in S$ for all $i = 1, 2, \dots, t$, so there exist $r_i > 0$ for each $i = 1, 2, \dots, t$ such that $B((x, y_i), r_i) \subseteq S$ for all $i = 1, 2, \dots, t$. For every $(x', y'_1, y'_2, \dots y'_t) \in B((x, y_1, y_2, \dots y_t), \min(r_1, r_2, \dots, r_t))$, we have $(x'-x)^2+\sum_{i = 1}^t (y'_i-y_i)^2 < \min(r_1^2, r_2^2, \dots, r_t^2)$, so $(x'-x)^2+(y'_i-y_i)^2 < r_i^2$ for all $i = 1, 2, \dots, t$, which implies that $(x', y'_i) \in S$ for all $i = 1, 2, \dots, t$, so $(x', y'_1, y'_2, \dots y'_t) \in T_1$. Thus $B((x, y_1, y_2, \dots y_t), \min(r_1, r_2, \dots, r_t)) \subseteq T_1$, so $T_1$ is open. Hence $S'$ is also open, since $S' = T_1 \cap T_2$.

The points $(x, y_1, y_2, \dots y_t) \in S'$ must satisfy $0 < y_t < n-(t-1)c$ and $y_{i+1}+c < y_i < n-(i-1)c$ for all $i = 1, \dots, t-1$. Thus the projection of these points onto the last $t$ coordinates is a $t$-dimensional solid of volume $\Theta(n^t)$ by Lemma \ref{simplex}. For each fixed $y_1, y_2, \dots, y_t$ with $y_1, y_2, \dots, y_t \in (0, n)$, let $S''_{y_1, y_2, \dots, y_t}$ denote the set of points $(a, b_1, b_2, \dots, b_t) \in S'$ with $b_i = y_i$ for each $i$. Then $\mu_1(S''_{y_1, y_2, \dots, y_t}) \le s$ by Lemma \ref{usefullem} or else $S$ would contain $P_{s, t, c}$. Since $S'$ is open, by Fubini's theorem the function $\mu_1(S''_{y_1, y_2, \dots, y_t}): [0, n]^t \rightarrow \mathbb{R}$ is a measurable function, so it is Lebesgue integrable on $[0, n]^t$. Thus $\mu_{t+1}(S') = \int_{y_t \in (0, n)} \int_{y_{t-1} \in (0,n)} \dots \int_{y_1 \in (0, n)} \mu_1(S''_{y_1, y_2, \dots, y_t}) d\mu_1(y_1) d\mu_1(y_2) \dots d\mu_1(y_t) = O(s n^t)$.

For all $x \in [0, n]$, let $S'_x$ denote the set of points $(a, b_1, b_2, \dots, b_t) \in S'$ with $a = x$, let $S_x$ denote the set of points $(a, b) \in S$ with $a = x$, and let $m_x = \mu_1(S_x)$. As in the last proof, since $S$ is open, by Fubini's theorem the function $m_x: [0, n] \rightarrow \mathbb{R}$ is a measurable function. Therefore $\frac{\max(m_x-c t,0)^t}{t^t}$ is also a measurable function. Thus $\frac{\max(m_x-c t,0)^t}{t^t}$ is Lebesgue integrable over $[0, n]$. For all $x$ with $m_x \ge c t$, we have $\mu_t(S'_x) \ge \frac{\max(m_x-c t,0)^t}{t^t}$.

To see why this is true, suppose that $m_x \ge c t$. For each $(x, y) \in S_x$, as in Theorem \ref{t=2} let $Q_{x, y} = \left\{ (x, r): (x, r) \in S \text{ and } r \le y\right\}$. Note that 

\begin{align*}
\left\{(y_1, \dots, y_t): (\forall i \le t) ((x, y_i) \in S) \text{ and } (\forall i \le t-1) (\mu_1(Q_{x,y_{i+1}})+c < \mu_1(Q_{x,y_i})) \right\}     \subseteq  \\
   \left\{(y_1, \dots, y_t):  (\forall i \le t)  ((x, y_i) \in S) \text{ and }  (\forall i \le t-1) (y_{i+1}+c < y_i) \right\}.
\end{align*} 

\noindent Then we have $\mu_t(S'_x) = $

\footnotesize
\begin{align*}
\int_{y_t \in (0, n-(t-1)c)} \int_{y_{t-1} \in (y_t+c, n-(t-2)c)} \dots \int_{y_1 \in (y_2+c, n)} \prod_{i = 1}^{t} 1_{(x,y_i) \in S} d \mu_1(y_1) d \mu_1(y_2) \dots d \mu_1(y_t) \ge \\
\int_{y_t \in (0, n-(t-1)c)} \int_{y_{t-1} \in (y_t+c, n-(t-2)c)} \dots \int_{y_1 \in (y_2+c, n)} (\prod_{i = 1}^{t}1_{(x,y_i) \in S})(\prod_{i = 1}^{t-1} 1_{\mu_1(Q_{x,y_{i+1}})+c < \mu_1(Q_{x,y_i})}) d \mu_1(y_1) d \mu_1(y_2) \dots d \mu_1(y_t).
\end{align*}
\normalsize

Given any $(y_1, \dots, y_t)$ with $(x, y_i) \in S$ for each $i \le t$, note that we have $\mu_1(Q_{x,y_i}) \in (0, m_x)$ for each $i \le t$. Since $S$ is open, we can write $S_x$ as a countable union of intervals $I_j$ with open endpoints. Thus there are only countably many $b \in (0, m_x)$ for which there does not exist $r$ such that $\mu_1(Q_{x,r}) = b$. Since countable sets have measure zero, the last integral is equal to 

\begin{align*}
\int_{z_t \in (0, m_x-(t-1)c)} \int_{z_{t-1} \in (z_t+c, m_x-(t-2)c)} \dots \int_{z_1 \in (z_2+c, m_x)} 1 d \mu_1(z_1) d \mu_1(z_2) \dots d \mu_1(z_t) \ge \\
 \frac{(m_x-c t)^t}{t^t},
\end{align*}

\noindent where the last inequality follows by Lemma \ref{simplex}. Thus by Fubini's theorem, we obtain $\mu_{t+1}(S') =  \int_{x \in [0, n]}\mu_t(S'_x) d\mu_1(x) \ge \int_{x \in [0, n]} \frac{\max((m_x-c t,0))^t}{t^t} d\mu_1(x)$.

Let $f: \mathbb{R} \rightarrow \mathbb{R}$ be defined by $f(z) =  \frac{\max(z-c t,0)^t}{t^t}$, so $f(m_x) =  \frac{\max(m_x-c t,0)^t}{t^t}$ and $f$ is convex. Thus we can write the last inequality in the last paragraph as $\mu_{t+1}(S') \ge \int_{x \in [0, n]} f(m_x) d\mu_1(x)$. Combining this with the inequality at the end of the second paragraph, we have $\int_{x \in [0, n]} f(m_x) d\mu_1(x) = O(s n^t)$. By Jensen's inequality, we obtain $n f(\frac{\mu_2(S)}{n}) = O(s n^t)$, so $n (\frac{\frac{\mu_2(S)}{n}}{t}-c)^t = O(s n^t)$ or else $\frac{\mu_2(S)}{n} \le c t$. Thus $\mu_2(S) = O(s^{\frac{1}{t}}n^{2-\frac{1}{t}})$.
\end{proof}

\subsection{Disjoint unions of horizontal segments}\label{segperm}

Our next result is for any disjoint union of a finite number of horizontal segments $P$ with no two points having the same $x$-coordinate and no two segments having the same $y$-coordinate, for which we prove a linear upper bound on $\px(n, P)$. This is in analogue with the linear upper bound on $\ex(n, M)$ when $M$ is a double permutation matrix \cite{gduluth}.

\begin{thm}
If $P$ is a disjoint union of a finite number of horizontal segments with no two points having the same $x$-coordinate and no two segments having the same $y$-coordinate, then $\px(n, P) = O(n)$.
\end{thm}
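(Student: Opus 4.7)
The plan is to build $P$ up iteratively, starting from a single point and alternating between Corollary \ref{addedpt} (to jump to the next segment's left endpoint) and Lemma \ref{addedseg} (to fill in that segment as a full horizontal segment), then telescoping the resulting linear increments to obtain a total bound of $O(n)$.

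First I will exploit the hypothesis that no two points of $P$ share an $x$-coordinate: this forces the $x$-projections of the constituent segments to be pairwise disjoint. So I can list the segments in order of increasing $x$-coordinate as $s_1, \dots, s_k$ with $s_i = [a_i, b_i] \times \{y_i\}$ satisfying $a_1 < b_1 < a_2 < b_2 < \cdots < b_k$. The $y_i$ are distinct by hypothesis, but their relative order plays no role in the argument because the operations in Section \ref{pxops} care only about the rightmost column.

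The iteration is as follows. Start from $P_0 = \{(a_1, y_1)\}$, for which $\px(n, P_0) = 0$, and apply Lemma \ref{addedseg} with $c = b_1 - a_1$ to extend it to $P_1 = s_1$, incurring cost at most $(b_1 - a_1)n$. Inductively, having built $P_{i-1} = s_1 \cup \cdots \cup s_{i-1}$, whose rightmost column is the singleton $\{(b_{i-1}, y_{i-1})\}$, I invoke Corollary \ref{addedpt} with $c = a_i - b_{i-1}$ to add the point $(a_i, y_i)$ (making $\{(a_i, y_i)\}$ the new rightmost column, at cost at most $(a_i - b_{i-1})n$), and then invoke Lemma \ref{addedseg} with $c = b_i - a_i$ to grow this to $P_i$ at cost at most $(b_i - a_i)n$. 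The essential check justifying both invocations is that at every intermediate stage the current set has a singleton rightmost column, which is immediate because the freshly-added segment endpoint or jumping point always lies strictly to the right of everything previously constructed.

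Summing over $i$, the costs telescope to give $\px(n, P) \leq \sum_{i=1}^k (b_i - a_i) n + \sum_{i=2}^k (a_i - b_{i-1}) n = (b_k - a_1) n = O(n)$, with the implicit constant equal to the horizontal span of $P$. I do not anticipate any serious obstacle; the argument is essentially bookkeeping on top of the already-proved operations of Section \ref{pxops}, and the disjoint-$x$-interval hypothesis is exactly what makes the telescoping clean.
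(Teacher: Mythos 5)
There is a genuine gap, and it is fatal to the approach. Corollary \ref{addedpt} does not do what you need it to do: it adds a new point that is $c$ to the right of a point $p$ of the rightmost column \emph{in the same row as $p$}. This is forced by its proof, which constructs $P''$ by attaching a horizontal segment of length $c$ with left endpoint at $p$ and then observes that $P''$ contains $P'$; if the new point sat in a different row, $P''$ would not contain $P'$ and the argument would collapse. In your iteration the jump from $P_{i-1}$ to $P_{i-1}\cup\{(a_i,y_i)\}$ places the new point in a row $y_i\neq y_{i-1}$ (the hypothesis guarantees the rows are distinct), so the corollary simply does not apply. Moreover, no corrected version of the operation can have the cost you charge for it: already for $P=\{(0,0)\}\to P'=\{(0,0),(1,1)\}$ your accounting gives $\px(n,P')\le 1\cdot n$, whereas the open L-shaped set $\bigl((0,n)\times(0,1)\bigr)\cup\bigl((0,1)\times(0,n)\bigr)$ is $P'$-free and has measure $2n-1>n$. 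More fundamentally, if an ``add a rightmost point in a new row at cost $O(n)$'' operation existed, your telescoping applied to a $k$-point set with all coordinates distinct would yield $\px(n,P)=O(kn)$, hence $\ex(n,M)=O(kn)$ for every $k\times k$ permutation matrix --- far stronger than the Marcus--Tardos/Fox bound $2^{O(k)}n$ and well beyond what is known. So the missing step is not bookkeeping; it is the entire content of the theorem.

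The paper's proof goes a different route precisely because of this: it grids $[0,n]^2$ into $c\times c$ squares, marks a square with a one whenever the accumulated row-measure since the previous marked square in its row exceeds the segment length $\ell$ (using Lemma \ref{usefullem} to extract a genuine length-$\ell$ segment when this happens), shows the resulting 0-1 matrix avoids $S(M_{P'},1)$ where $P'$ is the set of segment endpoints, and then invokes the linearity of double permutation matrices together with Lemma \ref{tardos_blank}. If you want to salvage an iterative argument, you would first need an operation that controls the cost of appending a point in a new row and new column, and the examples above show that any such operation must carry at least permutation-matrix-strength content.
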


\begin{proof}
By Lemma \ref{pxcontain}, it suffices to prove that $\px(n, P) = O(n)$ with the following restrictions on $P$. We assume that the consecutive rows of $P$ are a distance of $c$ apart, each segment in $P$ has closed endpoints and length $\ell$, and there is a distance of $c$ between the vertical lines that contain the closest endpoints of consecutive segments from left to right. 

Suppose that $S \subseteq [0, n]^2$ is an open $P$-free subset. We draw a grid on $S$ with a total of $\lceil \frac{n}{c} \rceil^2$ squares each of dimensions $c \times c$. The squares in this proof are closed, so they intersect at the boundaries. Note that the rightmost column and topmost row of squares in the grid may cover some area outside of $[0, n]^2$ if $\frac{n}{c}$ is not an integer. We construct a 0-1 matrix $B$ of dimensions $\lceil \frac{n}{c} \rceil \times \lceil \frac{n}{c} \rceil$ so that the $c \times c$ square $U_{i j}$ in the $i^{th}$ row and $j^{th}$ column of squares in the grid corresponds to the $(i, j)$ entry $b_{i j}$ of $B$. 

To construct $B$, we add entries row by row, left to right. Suppose that we are on row $i$ of $B$. If $U_{i j}$ contains no point of $S$, then we set $b_{i j} = 0$. If $U_{i j}$ is the first square in the $i^{th}$ row of squares in the grid to contain any point of $S$, then we set $b_{i j} = 1$. Finally if $U_{i j}$ contains a point of $S$ but some other square in the $i^{th}$ row of squares to the left of $U_{i j}$ contains a point of $S$, then let $j'$ be the greatest integer less than $j$ for which $b_{i j'} = 1$ and let $Q$ denote the restriction of $S$ to the $j - j'+1$ squares between $U_{i j'}$ and $U_{i j}$ inclusive. We set $b_{i j} = 1$ if and only if there exists a row $r$ of $Q$ for which $\mu_1(Q_r) > \ell$, where $Q_r$ denotes the $1$-dimensional restriction of $Q$ to row $r$.

Let $P'$ be the finite subset of $\mathbb{R}^2$ obtained from $P$ by only including the left and right endpoints of each segment, so the cardinality of $P'$ is twice the number of segments in $P$. Note that $M_{P'}$ is a double permutation matrix, so $\ex(n, M_{P'}) = O(n)$ by \cite{gduluth}.

The main observation is that $B$ must avoid $S(M_{P'}, 1)$. To see why this is true, suppose for contradiction that $B$ contains $S(M_{P'}, 1)$, and consider a copy of $S(M_{P'}, 1)$ in $B$. Suppose that there are entries $b_{i j_1}$ and $b_{i j_2}$ in row $i$ of $B$ that contain ones in the copy of $S(M_{P'}, 1)$. Then in the $i^{th}$ row of squares in the grid, if $Q$ is the restriction of $S$ to the squares between $U_{i j_1}$ and $U_{i j_2}$ inclusive, we can find some row $r$ such that $\mu_1(Q_r) > \ell$. By Lemma \ref{usefullem} with $c = \ell$, $Q$ must contain a horizontal segment of length $\ell$ with closed endpoints. Note that we can do the same process with any entries in the same row of $B$ that contain ones in the copy of $S(M_{P'}, 1)$. This yields a set of horizontal segments of length $\ell$ with a distance of at least $c$ between consecutive rows and a distance of at least $c$ between columns containing the closest endpoints of consecutive segments, since $B$ contains $S(M_{P'}, 1)$ and each entry of $B$ corresponds to a $c \times c$ square. However this means that $S$ contains $P$, which is a contradiction.

Now we know that the number of ones in $B$ is at most  $\ex(\lceil \frac{n}{c} \rceil , S(M_{P'},1)) = O(n)$ by Lemma \ref{tardos_blank} and \cite{gduluth}, where the constant in the bound depends on $c$. For every entry $b_{i j} = 1$ in $B$, we define chunks which cover the elements of $S$. If $b_{i j}$ is not the rightmost entry in row $i$ with a $1$, then let $b_{i k}$ be the next entry in row $i$ after $b_{i j}$ for which $b_{i k} = 1$. The chunk $C_{i j}$ consists of all squares in the $i^{th}$ row of the grid between $U_{i j}$ and $U_{i k}$, including $U_{i j}$ but not $U_{i k}$. If $b_{i j}$ is the rightmost entry in row $i$ of $B$ with a $1$, then the chunk $C_{i j}$ consists of all squares $U_{i k}$ for $k \ge j$.

Consider any chunk $Q = C_{i j}$ of $S$. By definition, for every row $r$ in $Q$, we must have $\mu_1(Q_r) \le \ell$. Without loss of generality, we may assume that $r \in [0, c]$. Thus by Fubini's theorem, $\mu_2(Q) = \int_{r \in [0,c]} \mu_1(Q_r) d \mu_1(r) \le c \ell$. Thus $\mu_2(S) \le c \ell \ex(\lceil \frac{n}{c} \rceil , S(M_{P'},1)) = O(n)$.
\end{proof}

\section{Subsets of $\mathbb{R}^d$}\label{higher_d}

In this section, we generalize the extremal function $\px(n, P)$ and our results to higher dimensional sets of points. We start by defining extremal functions of forbidden $d$-dimensional 0-1 matrices. We say that a $d$-dimensional 0-1 matrix $A$ \emph{contains} a $d$-dimensional 0-1 matrix $B$ if some submatrix of $A$ is either equal to $B$ or can be turned into $B$ by changing some ones to zeroes. Otherwise we say that $A$ \emph{avoids} $B$, and that $A$ is \emph{$B$-free}. We define the $d$-dimensional extremal function $\ex(n, M, d)$ as the maximum possible number of ones in a $d$-dimensional $M$-free 0-1 matrix with all dimensions equal to $n$. Note that the extremal function $\ex(n, M, d)$ is the same as $\ex(n, M)$ when $d = 2$ and $M$ is a $2$-dimensional 0-1 matrix. The vast majority of research on $\ex(n, M, d)$ has been for the case $d = 2$, but several papers have focused on higher dimensions including \cite{KM, GTmat, ff, gkst, zhang}. 

As with matrix extremal functions, we can generalize $\px(n, P)$ from $\mathbb{R}^2$ to $\mathbb{R}^d$. Suppose that $P$ and $S$ are both subsets of $\mathbb{R}^d$. Let $C_i(P)$ be the set of all $i^{th}$ coordinates of points in $P$. Similarly let $C_i(S)$ be the set of all $i^{th}$ coordinates of points in $S$. We say that $S$ \emph{contains} $P$ if there exist functions $f_i: C_i(P) \rightarrow C_i(S)$ for each $i = 1, \dots, d$ such that all statements below are true: 
\begin{enumerate}
\item For each $i = 1, \dots, d$, we have $\frac{f_i(x)-f_i(x')}{x-x'} \ge 1$ for all $x, x' \in C_i(P)$ with $x > x'$.
\item  For all $(x_1, \dots, x_d) \in P$, we have $(f_1(x_1), \dots, f_d(x_d)) \in S$.
\end{enumerate}

If $S$ does not contain $P$, then $S$ \emph{avoids} $P$, i.e. $S$ is \emph{$P$-free}. For any  subset $P \subseteq \mathbb{R}^d$ and $n \in \mathbb{R}^{+}$, define $\px(n, P,d)$ as the supremum of $\mu_d(S)$ over all open $P$-free subsets $S \subseteq [0, n]^d$. 

We start with two simple observations about $\px(n, P, d)$ which generalize some earlier observations. 

\begin{lem}\label{dpxcontain}
If $P$ and $Q$ are subsets of $\mathbb{R}^d$ for which $Q$ contains $P$, then $\px(n,P,d) \le \px(n, Q,d)$.
\end{lem}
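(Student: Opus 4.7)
The plan is to mimic the proof of Lemma~\ref{pxcontain} in the $d$-dimensional setting. The key structural fact I would establish first is that the containment relation on subsets of $\mathbb{R}^d$ is transitive: if $S$ contains $Q$ via monotone maps $g_i: C_i(Q)\to C_i(S)$ and $Q$ contains $P$ via monotone maps $f_i: C_i(P)\to C_i(Q)$, then $S$ contains $P$ via the coordinatewise compositions $g_i\circ f_i: C_i(P)\to C_i(S)$. The third axiom (that images of points of $P$ land in $S$) is immediate from applying the two hypotheses in succession. For the slope condition, for any $x>x'$ in $C_i(P)$, monotonicity of $f_i$ with ratio $\ge 1$ forces $f_i(x) > f_i(x')$, so one can write
\[
\frac{g_i(f_i(x)) - g_i(f_i(x'))}{x - x'} \;=\; \frac{g_i(f_i(x)) - g_i(f_i(x'))}{f_i(x) - f_i(x')}\cdot \frac{f_i(x) - f_i(x')}{x - x'} \;\ge\; 1\cdot 1 \;=\; 1.
\]

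With transitivity in hand, the lemma follows by contraposition. If $S \subseteq [0,n]^d$ is an open subset that is $P$-free but happens to contain $Q$, then composing with the maps witnessing $Q$ containing $P$ would force $S$ to contain $P$, contradicting $P$-freeness. Hence every open $P$-free subset of $[0,n]^d$ is also $Q$-free, so the family of open $P$-free subsets of $[0,n]^d$ is contained in the family of open $Q$-free subsets of $[0,n]^d$. Taking supremum of Lebesgue measure over these families gives $\px(n,P,d)\le \px(n,Q,d)$.

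There is no real obstacle here; the only thing that needs to be checked carefully is the composition of the monotone maps to establish transitivity, which is essentially the chain-rule-style inequality above. This is the same pattern as the $2$-dimensional case in Lemma~\ref{pxcontain}, and the argument does not interact with $d$ in any substantive way beyond requiring one composition for each of the $d$ coordinate directions.
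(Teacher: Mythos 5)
Your proof is correct and follows exactly the route the paper takes (the paper leaves this $d$-dimensional lemma unproved, and its two-dimensional analogue, Lemma~\ref{pxcontain}, is proved by precisely the observation that any open $P$-free set is $Q$-free, which rests on the transitivity of containment you verify via composing the coordinatewise monotone maps). No issues.
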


\begin{lem}
For all $m \le n$, we have $\px(m, P, d) \le \px(n, P, d)$.
\end{lem}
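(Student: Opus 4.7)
The plan is to mirror the proof of Lemma \ref{nondec} verbatim, with the only change being to replace $\mathbb{R}^2$ by $\mathbb{R}^d$ and squares by $d$-dimensional cubes. The key observation is that whenever $m \le n$, we have the containment of cubes $[0,m]^d \subseteq [0,n]^d$, so every subset of $[0,m]^d$ is automatically a subset of $[0,n]^d$.

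More concretely, I would start by fixing an arbitrary open $P$-free subset $S \subseteq [0,m]^d$. Since $[0,m]^d \subseteq [0,n]^d$, the same set $S$ is also a subset of $[0,n]^d$. It is still open (openness is intrinsic to $S$ as a subset of $\mathbb{R}^d$ and does not depend on the ambient box), and it is still $P$-free (the $P$-freeness condition depends only on $S$ itself and on $P$, not on the containing box). Therefore $S$ belongs to the collection of sets over which the supremum defining $\px(n, P, d)$ is taken.

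Taking the supremum over all such $S \subseteq [0,m]^d$ on the left-hand side then gives $\px(m, P, d) \le \px(n, P, d)$. There is no real obstacle here; the only thing worth remarking on is that the definition of $\px(n, P, d)$ ranges over \emph{open} $P$-free subsets of $[0,n]^d$, but openness and $P$-freeness are preserved under enlarging the ambient cube, so nothing needs to be verified beyond the trivial inclusion of cubes.
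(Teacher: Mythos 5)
Your proof is correct and is essentially the same one-line argument the paper uses for the two-dimensional Lemma \ref{nondec} (the paper states the $d$-dimensional version without proof, as an immediate generalization). Nothing further is needed.
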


As in the $2$-dimensional case, we have that $\px(n, P, d)$ is continuous in $n$ for $n > 0$.

\begin{lem}\label{sandwichd}
For all $n > 0$ and all $\epsilon$ with $0 < \epsilon < n$, we have $\px(n, P, d) \le \px(n+\epsilon, P, d) < \px(n, P, d)+d (2n)^{d-1} \epsilon$.
\end{lem}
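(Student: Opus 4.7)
The plan is to mimic the proof of Lemma \ref{sandwich} in the $d$-dimensional setting, where the sole novelty is replacing the two-dimensional area estimate $2n\epsilon+\epsilon^2$ with the corresponding $d$-dimensional binomial expansion. The first inequality $\px(n,P,d)\le\px(n+\epsilon,P,d)$ is immediate from the monotonicity lemma stated just above, so all the work is in the second inequality.

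For the second inequality, I would take any open $P$-free subset $S\subseteq[0,n+\epsilon]^d$ and set $S'=S\cap(0,n)^d$. Then $S'$ is open as the intersection of two open sets, it is contained in $[0,n]^d$, and it is $P$-free because any containment of $P$ in $S'$ would immediately give a containment of $P$ in $S$. Hence $\mu_d(S')\le\px(n,P,d)$. The measure lost satisfies
\[
\mu_d(S)-\mu_d(S')\le\mu_d\bigl([0,n+\epsilon]^d\setminus(0,n)^d\bigr)=(n+\epsilon)^d-n^d,
\]
since the boundary of $[0,n]^d$ has $d$-dimensional Lebesgue measure zero.

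The final step is the binomial estimate. Using $0<\epsilon<n$, I would bound each term $\binom{d}{k}n^{d-k}\epsilon^k$ by $\binom{d}{k}n^{d-1}\epsilon$ (strictly, for $k\ge 2$, since $\epsilon^{k-1}<n^{k-1}$), yielding
\[
(n+\epsilon)^d-n^d=\sum_{k=1}^{d}\binom{d}{k}n^{d-k}\epsilon^k<\Bigl(\sum_{k=1}^{d}\binom{d}{k}\Bigr)n^{d-1}\epsilon=(2^d-1)n^{d-1}\epsilon\le d\cdot 2^{d-1}n^{d-1}\epsilon=d(2n)^{d-1}\epsilon,
\]
where the last inequality $2^d-1\le d\cdot 2^{d-1}$ holds for all $d\ge 2$. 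Combining gives $\mu_d(S)<\px(n,P,d)+d(2n)^{d-1}\epsilon$, and taking the supremum over all admissible $S$ yields $\px(n+\epsilon,P,d)<\px(n,P,d)+d(2n)^{d-1}\epsilon$.

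There is no real obstacle here; the only thing to be careful about is ensuring that the strict inequality propagates through the supremum, which it does because the binomial bound already has slack (one can replace the constant $d\cdot 2^{d-1}$ by the tighter $2^d-1$ and still get strict inequality, since $\epsilon<n$ forces at least one of the estimates $\epsilon^k<n^{k-1}\epsilon$ to be strict whenever $d\ge 2$).
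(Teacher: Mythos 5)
Your proposal is correct and is exactly the $d$-dimensional analogue of the paper's proof of Lemma \ref{sandwich} (the paper states Lemma \ref{sandwichd} without proof, as the generalization is routine): intersect with the open cube $(0,n)^d$, bound the lost measure by $(n+\epsilon)^d-n^d$, and estimate the binomial expansion. Your verification that $(2^d-1)n^{d-1}\epsilon\le d(2n)^{d-1}\epsilon$ and that strictness survives the supremum (for $d\ge 2$) correctly fills in the arithmetic the paper leaves implicit.
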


\begin{prop}
The function $\px(n, P, d)$ is continuous in $n$ for $n > 0$.
\end{prop}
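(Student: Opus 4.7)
The plan is to transcribe the proof of the two-dimensional continuity proposition, substituting Lemma \ref{sandwichd} for Lemma \ref{sandwich}. The logic is the usual sandwich argument: monotonicity of $\px(\cdot, P, d)$ in $n$ gives a one-sided comparison, while Lemma \ref{sandwichd} controls the growth in the other direction by $d(2n)^{d-1} \epsilon$, so the two together pin $\px(n, P, d)$ near $\px(n_0, P, d)$ for $n$ near $n_0$.

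Concretely, I would fix $n_0 > 0$ and $\epsilon > 0$ and choose $\delta > 0$ small enough that $\delta < n_0/3$ and $d(2n_0)^{d-1} \delta < \epsilon$; for instance
\[
\delta = \min\!\left(\frac{n_0}{3},\ \frac{\epsilon}{d(2n_0)^{d-1} + 1}\right).
\]
For $n$ with $n_0 \le n < n_0 + \delta$, monotonicity together with Lemma \ref{sandwichd} (applied at base point $n_0$ with increment $\delta$) gives
\[
\px(n, P, d) \le \px(n_0 + \delta, P, d) < \px(n_0, P, d) + d(2n_0)^{d-1} \delta < \px(n_0, P, d) + \epsilon.
\]
For $n$ with $n_0 - \delta < n \le n_0$, applying Lemma \ref{sandwichd} at base point $n_0 - \delta$ with increment $\delta$ (which is valid because $\delta < n_0 - \delta$) yields
\[
\px(n_0, P, d) < \px(n_0 - \delta, P, d) + d\bigl(2(n_0 - \delta)\bigr)^{d-1} \delta \le \px(n, P, d) + d(2n_0)^{d-1} \delta < \px(n, P, d) + \epsilon,
\]
so $\px(n, P, d) > \px(n_0, P, d) - \epsilon$. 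Combining the two cases gives $\lvert \px(n, P, d) - \px(n_0, P, d) \rvert < \epsilon$ whenever $\lvert n - n_0 \rvert < \delta$, which is continuity at $n_0$.

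Since Lemma \ref{sandwichd} has already been stated and absorbs all the geometric content (the thickened boundary of $[0, n+\epsilon]^d$ has measure at most $d(2n)^{d-1}\epsilon + O(\epsilon^2)$, obtained by intersecting with $(0, n)^d$), there is no real obstacle here: the proof is a routine transcription of the $d = 2$ argument, with the $d$-dimensional boundary constant $d(2n)^{d-1}$ playing the role of the old constant $3n$.
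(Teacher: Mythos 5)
Your proof is correct and is essentially the same as the paper's: both fix $n_0$ and $\epsilon$, choose $\delta$ comparable to $\min(n_0/3,\ \epsilon/(d(2n_0)^{d-1}))$, and sandwich $\px(n,P,d)$ between $\px(n_0-\delta,P,d)$ and $\px(n_0+\delta,P,d)$ using monotonicity together with Lemma \ref{sandwichd}. The only differences are cosmetic (your slightly different constant in $\delta$ and your rearrangement of the lower-bound inequality).
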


\begin{proof}
Fix $n_0 > 0$ and $\epsilon > 0$. By Lemma \ref{sandwichd}, for $\delta = \min(\frac{\epsilon}{d (2n_0)^{d-1}}, \frac{n_0}{3})$ and $n < n_0+\delta$ we have $\px(n, P, d) \le \px(n_0+\delta, P, d) < \px(n_0, P, d)+\epsilon$. Moreover if $n > n_0 - \delta$, then we have $\px(n, P, d) \ge \px(n_0 - \delta, P, d) > \px(n_0, P, d)-d(2(n_0-\delta))^{d-1} \delta > \px(n_0, P, d)-\epsilon$. Thus we have shown that for all $n_0 > 0$ and for all $\epsilon > 0$ there exists $\delta > 0$ for which $|\px(n, P, d)- \px(n_0, P, d)| < \epsilon$ for all $n$ such that $|n-n_0| < \delta$.
\end{proof}

Given a subset $P \subseteq \mathbb{R}^{d+1}$, let $\Pr(P)$ denote the subset of $\mathbb{R}^d$ for which $(x_1,\dots, x_d) \in \Pr(P)$ if and only if there exists $x_{d+1}$ such that $(x_1,\dots, x_d, x_{d+1}) \in P$. The next observation is analogous to a fact about projections of $(d+1)$-dimensional 0-1 matrices into $d$ dimensions.

\begin{lem}\label{dproj}
For all $P \subseteq \mathbb{R}^{d+1}$, we have $\px(n, P, d+1) \ge n \px(n, \Pr(P), d)$.
\end{lem}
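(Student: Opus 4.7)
The plan is to construct, for each $\Pr(P)$-free open subset $S \subseteq [0,n]^d$, a corresponding open $P$-free subset of $[0,n]^{d+1}$ whose $(d+1)$-dimensional measure is exactly $n \cdot \mu_d(S)$, and then take suprema.

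Let $S_t \subseteq [0,n]^d$ for $t \in \mathbb{N}$ be a sequence of open $\Pr(P)$-free subsets with $\lim_{t \to \infty} \mu_d(S_t) = \px(n, \Pr(P), d)$. For each $t$, define $T_t = S_t \times (0, n) \subseteq [0,n]^{d+1}$. Each $T_t$ is open as the product of open sets, and by Fubini we have $\mu_{d+1}(T_t) = n \cdot \mu_d(S_t)$. It therefore suffices to show that $T_t$ is $P$-free, since then $\px(n, P, d+1) \ge \mu_{d+1}(T_t) = n \mu_d(S_t)$ for all $t$, and letting $t \to \infty$ gives the claim.

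To verify $P$-freeness, suppose for contradiction that $T_t$ contains $P$. Then there exist coordinate maps $f_i : C_i(P) \to C_i(T_t)$ for $i = 1, \dots, d+1$ satisfying the monotone-slope condition, such that for every $(x_1, \dots, x_{d+1}) \in P$ the image $(f_1(x_1), \dots, f_{d+1}(x_{d+1}))$ lies in $T_t = S_t \times (0, n)$. Now I would exhibit maps $g_i : C_i(\Pr(P)) \to C_i(S_t)$ witnessing that $S_t$ contains $\Pr(P)$: since $C_i(\Pr(P)) = C_i(P)$ for each $i = 1, \dots, d$, we may simply take $g_i = f_i$ (viewed into $C_i(S_t)$, noting that the $i$-th coordinate of a point of $S_t \times (0,n)$ is the $i$-th coordinate of its projection onto $S_t$). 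The slope condition on $g_i$ is inherited directly from that on $f_i$. For the incidence condition, fix $(x_1, \dots, x_d) \in \Pr(P)$; by definition of $\Pr(P)$ there exists $x_{d+1}$ with $(x_1, \dots, x_{d+1}) \in P$, so $(f_1(x_1), \dots, f_{d+1}(x_{d+1})) \in S_t \times (0, n)$, which forces its first $d$ coordinates $(g_1(x_1), \dots, g_d(x_d)) = (f_1(x_1), \dots, f_d(x_d))$ to lie in $S_t$. Hence $S_t$ contains $\Pr(P)$, contradicting our choice of $S_t$.

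There is no real obstacle here: the only subtle point is keeping the definitions straight, namely that $C_i(P) = C_i(\Pr(P))$ for $i \le d$ so that the first $d$ coordinate maps of a containment witness for $P$ in $T_t$ restrict directly to a containment witness for $\Pr(P)$ in $S_t$. The $(d+1)$-st coordinate map $f_{d+1}$ plays no role once we project, which is exactly why the product construction suffices.
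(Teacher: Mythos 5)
Your proposal is correct and follows essentially the same route as the paper: build the product $S_t \times (0,n)$, note it is open with measure $n\,\mu_d(S_t)$, and argue it is $P$-free because a containment witness would project to a containment of $\Pr(P)$ in $S_t$. The paper states the projection step more tersely ("$Z_t$ is $P$-free, or else $S_t$ would contain $\Pr(P)$"), and your explicit verification that the first $d$ coordinate maps restrict to a witness is a correct filling-in of that step.
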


\begin{proof}
Let $S_t \subseteq [0, n]^d$ for $t \in \mathbb{N}$ be a sequence of $\Pr(P)$-free open subsets with the property that $\lim_{t \rightarrow \infty} \mu_d(S_t) = \px(n, \Pr(P), d)$. Note that since $S_t$ is open for each $t$, we have $S_t \subseteq (0, n)^d$. For each $t \in \mathbb{N}$, let $Z_t \subseteq (0, n)^{d+1}$ be the set such that  for all $x_1, \dots, x_{d+1} \in (0, n)$ we have $(x_1, \dots, x_d, x_{d+1}) \in Z_t$ if and only if $(x_1, \dots, x_d) \in S_t$. Then $Z_t$ is open, since it is the Cartesian product of open sets. Also $Z_t$ is $P$-free, or else $S_t$ would contain $\Pr(P)$. Thus $\px(n, P, d+1) \ge \lim_{t \rightarrow \infty} \mu_{d+1}(Z_t) = \lim_{t \rightarrow \infty} n \mu_{d}(S_t) = n  \px(n, \Pr(P), d)$.
\end{proof}

It is easy to find examples for which the last result is sharp. For example, let $P$ be any subset of $\mathbb{R}^{d+1}$ such that all elements of $P$ have the same last coordinate. 

\begin{prop}\label{lastcoord}
If all elements of $P \subseteq \mathbb{R}^{d+1}$ have the same last coordinate, then $\px(n, P, d+1) = n \px(n, \Pr(P), d)$.
\end{prop}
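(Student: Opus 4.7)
The plan is to combine Lemma \ref{dproj} with a matching upper bound. Since Lemma \ref{dproj} already gives $\px(n, P, d+1) \ge n \px(n, \Pr(P), d)$, it suffices to prove the reverse inequality $\px(n, P, d+1) \le n \px(n, \Pr(P), d)$.

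For the upper bound, I would take an arbitrary open $P$-free subset $S \subseteq [0,n]^{d+1}$ and, for each $z \in (0,n)$, consider the horizontal slice
\[
S_z = \{(x_1,\dots,x_d) \in \mathbb{R}^d : (x_1,\dots,x_d,z) \in S\}.
\]
Since $S$ is open in $\mathbb{R}^{d+1}$, each slice $S_z$ is open in $\mathbb{R}^d$, and is clearly contained in $[0,n]^d$. The key claim is that every slice $S_z$ is $\Pr(P)$-free. To prove this, let $y_0$ denote the common last coordinate of all elements of $P$, and suppose for contradiction that $S_z$ contained $\Pr(P)$ via maps $f_i : C_i(\Pr(P)) \to C_i(S_z)$ for $i = 1, \dots, d$ satisfying the expansion and image conditions. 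I would then extend to $(d+1)$ dimensions by defining $f_{d+1} : \{y_0\} \to C_{d+1}(S)$ via $f_{d+1}(y_0) = z$; the expansion condition on $f_{d+1}$ holds vacuously since its domain is a singleton. For every $(x_1,\dots,x_d,y_0) \in P$, we would then have $(f_1(x_1),\dots,f_d(x_d)) \in S_z$, which by definition of $S_z$ means $(f_1(x_1),\dots,f_d(x_d),z) \in S$, contradicting that $S$ is $P$-free.

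Having established that every slice $S_z$ is an open $\Pr(P)$-free subset of $[0,n]^d$, we get $\mu_d(S_z) \le \px(n, \Pr(P), d)$ for every $z \in (0,n)$. By Fubini's theorem,
\[
\mu_{d+1}(S) = \int_{[0,n]} \mu_d(S_z) \, d\mu_1(z) \le n \px(n, \Pr(P), d),
\]
and taking the supremum over open $P$-free $S \subseteq [0,n]^{d+1}$ yields $\px(n, P, d+1) \le n \px(n, \Pr(P), d)$.

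The step that requires the most care is the extension argument for the containment maps: one must check that using a singleton domain for $f_{d+1}$ legitimately satisfies the definition of containment in $\mathbb{R}^{d+1}$, and this is precisely where the hypothesis that \emph{all} points of $P$ share a last coordinate is used (so $C_{d+1}(P) = \{y_0\}$, making the slope condition vacuous). Apart from this, the remaining ingredients — openness of slices of open sets, measurability for Fubini, and invoking Lemma \ref{dproj} for the lower bound — are standard.
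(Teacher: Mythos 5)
Your proposal is correct and follows essentially the same route as the paper: lower bound from Lemma \ref{dproj}, upper bound by slicing $S$ at each height $z$, observing each slice is an open $\Pr(P)$-free subset of $[0,n]^d$, and integrating via Fubini. The only difference is that you spell out the extension of the containment maps (defining $f_{d+1}$ on the singleton $\{y_0\}$), a detail the paper leaves implicit with ``or else $S$ would contain $P$.''
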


\begin{proof}
The lower bound $\px(n, P, d+1) \ge n \px(n, \Pr(P), d)$ follows from the last lemma. For the upper bound, let $S \subseteq [0, n]^{d+1}$ be open and $P$-free. For each $z \in [0, n]$, let $S_z$ be the subset of $\mathbb{R}^d$ such that $(x_1, \dots, x_d) \in S_z$ if and only if $(x_1, \dots, x_d, z) \in S$. 

Then $\mu_d(S_z) \le \px(n, \Pr(P), d)$ for all $z \in [0, n]$, or else $S$ would contain $P$. Since $S$ is open, the function $\mu_d(S_z): [0, n] \rightarrow \mathbb{R}$ is a measurable function by Fubini's theorem, so it is Lebesgue integrable on $[0, n]$. Thus $\mu_{d+1}(S) = \int_{z \in [0, n]} \mu_d(S_z) d\mu_1(z) \le n \px(n, \Pr(P), d)$.
\end{proof}

Using the last proposition, we can find many subsets $P \subseteq \mathbb{R}^d$ for which $\px(n, P, d) = \Theta(n^d)$ for $d > 2$.

\begin{cor}
If $P \subseteq \mathbb{R}^2$ is open, let $P' \subseteq \mathbb{R}^d$ for $d > 2$ be obtained from $P$ by letting $(x, y, z_1, \dots, z_{d-2}) \in P'$ if and only if $(x, y) \in P$ and $(z_1, \dots, z_{d-2}) = (0, \dots, 0)$. If $Q_{P'}$ denotes the set of points in $P'$ with rational coordinates, then $\px(n, Q_{P'}, d) = \Theta(n^d)$.
\end{cor}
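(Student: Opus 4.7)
The plan is to combine the trivial upper bound with an iterated application of Lemma \ref{dproj}, reducing the claim to the two-dimensional result Theorem \ref{open2}.

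First I would note that the upper bound $\px(n, Q_{P'}, d) \le n^d$ is immediate, since any open $P$-free subset of $[0,n]^d$ has measure at most $n^d$. So the whole content of the corollary is the matching lower bound.

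For the lower bound, the key structural observation is that every point of $Q_{P'}$ has last coordinate equal to $0$. In particular, the projection $\Pr(Q_{P'}) \subseteq \mathbb{R}^{d-1}$ is again exactly the rational-coordinate embedding $\{(x, y, 0, \dots, 0) : (x, y) \in Q_P\}$ of $Q_P$, just in one fewer dimension. Applying Lemma \ref{dproj} once gives $\px(n, Q_{P'}, d) \ge n \cdot \px(n, \Pr(Q_{P'}), d-1)$, and the projection preserves the "zeros in the trailing coordinates" structure. I would then iterate this step $d-2$ times:
\[
\px(n, Q_{P'}, d) \;\ge\; n \px(n, \Pr(Q_{P'}), d-1) \;\ge\; n^2 \px(n, \Pr^2(Q_{P'}), d-2) \;\ge\; \cdots \;\ge\; n^{d-2} \px(n, Q_P, 2),
\]
where in the last step I use that after $d-2$ projections the trailing zeros have all been stripped, leaving exactly $Q_P \subseteq \mathbb{R}^2$.

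Finally I invoke Theorem \ref{open2}, which gives $\px(n, Q_P, 2) = \Theta(n^2)$ (this is precisely where the hypothesis that $P$ is open in $\mathbb{R}^2$ is used, via the constant depending on $P$). Combining, $\px(n, Q_{P'}, d) \ge n^{d-2} \cdot \Omega(n^2) = \Omega(n^d)$, which together with the trivial upper bound gives $\px(n, Q_{P'}, d) = \Theta(n^d)$.

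There is no real obstacle here; the only thing to be careful about is verifying that each successive projection really does yield a set of the same form (rational embedding of $Q_P$ with extra zero coordinates), so that Lemma \ref{dproj} can legitimately be reapplied. Alternatively, since all points of $Q_{P'}$ share the same last coordinate, one could apply Proposition \ref{lastcoord} directly to obtain the equality $\px(n, Q_{P'}, d) = n \px(n, \Pr(Q_{P'}), d-1)$ at each step, which would give the slightly stronger statement $\px(n, Q_{P'}, d) = n^{d-2} \px(n, Q_P, 2)$, but for the $\Theta(n^d)$ claim only the lower-bound direction is needed.
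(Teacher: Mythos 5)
Your proof is correct and follows essentially the same route as the paper: the paper's proof is exactly an iterated application of Proposition \ref{lastcoord} (whose lower-bound half is Lemma \ref{dproj}) a total of $d-2$ times, reducing to Theorem \ref{open2}. Your observation that only the lower-bound direction (Lemma \ref{dproj}) is needed once the trivial upper bound $n^d$ is in hand is a fine, slightly leaner way to phrase the same argument.
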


\begin{proof}
This follows from Theorem \ref{open2} by applying Proposition \ref{lastcoord} a total of $d-2$ times to $Q_{P}$.
\end{proof}

By the last corollary and Lemma \ref{dpxcontain}, we also obtain $\px(n, P, d) = \Theta(n^d)$ for any open subset $P \subseteq \mathbb{R}^d$.

\begin{cor}
If $P \subseteq \mathbb{R}^d$ is open, then $\px(n, P, d) = \Theta(n^d)$ and $\px(n, Q_P, d) = \Theta(n^d)$.
\end{cor}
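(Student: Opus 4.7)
The plan is to reduce to the previous corollary, which has already done the hard work of constructing $P$-free sets of measure $\Theta(n^d)$ in the special case of subsets of $\mathbb{R}^d$ that are ``$2$-dimensional open slices.'' The upper bound $\px(n, P, d) \le n^d$ (and hence the same bound for $\px(n, Q_P, d)$) is immediate from the definition of the extremal function, since every open $P$-free subset of $[0,n]^d$ has $d$-dimensional Lebesgue measure at most $n^d$. So I focus on the lower bound, assuming $P$ is nonempty; if $P = \emptyset$, every subset of $[0,n]^d$ is vacuously $P$-free and the result is trivial.

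First I would extract from $P$ a subset of precisely the shape appearing in the previous corollary. Since $P$ is open and nonempty, $Q_P$ is dense in $P$, so I can pick a rational point $p = (p_1, \dots, p_d) \in Q_P$ together with an open ball $B(p, r) \subseteq P$. Let $P_0 \subseteq \mathbb{R}^2$ be the open cross-section $\{(x, y) : (x, y, p_3, \dots, p_d) \in B(p, r)\}$, and set $P'' := \{(x, y, p_3, \dots, p_d) : (x, y) \in P_0\} \subseteq P$. Translating $P''$ by $-(0, 0, p_3, \dots, p_d)$ produces a set of the exact form featured in the previous corollary, with $P_0$ in the role of the open subset of $\mathbb{R}^2$. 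Because $p_3, \dots, p_d \in \mathbb{Q}$, this translation commutes with the passage to rational coordinates, and it preserves $\px(n, \cdot, d)$ by the same argument as the $2$-dimensional translation lemma in Section \ref{basic_prop}: the definition of containment in Section \ref{higher_d} is invariant under simultaneously translating $P$ and $S$ by the same vector in $\mathbb{R}^d$. The previous corollary therefore yields $\px(n, Q_{P''}, d) = \Theta(n^d)$.

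To finish, observe that whenever $A \subseteq B \subseteq \mathbb{R}^d$ as point sets, the identity maps $f_i : C_i(A) \hookrightarrow C_i(B)$ trivially satisfy both conditions of the definition of containment in Section \ref{higher_d}, so $B$ contains $A$. Applying this to $Q_{P''} \subseteq P$ and to $Q_{P''} \subseteq Q_P$, and then invoking Lemma \ref{dpxcontain} in each case, yields both $\px(n, P, d) \ge \px(n, Q_{P''}, d) = \Theta(n^d)$ and $\px(n, Q_P, d) \ge \px(n, Q_{P''}, d) = \Theta(n^d)$, which combined with the trivial upper bounds complete both claims. The only point requiring care is the translation invariance of $\px(\cdot, \cdot, d)$, but this is a routine one-line extension of the $2$-dimensional case and is not a real obstacle.
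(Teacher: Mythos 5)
Your proof is correct and follows essentially the same route as the paper, which disposes of this corollary in a single sentence by combining the preceding corollary (on rational points of planar slices) with Lemma \ref{dpxcontain}; you have simply filled in the details that the paper leaves implicit (extracting a rational-centered slice of a ball inside $P$, translating it to the standard position, and using that subsets are contained via identity maps). The only nitpick is the degenerate case $P=\emptyset$, where under the formal definition every set vacuously \emph{contains} the empty set rather than avoiding it, but the statement implicitly assumes $P$ nonempty just as the paper's two-dimensional Theorem \ref{open2} does.
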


Given any finite subset $P \subseteq \mathbb{R}^d$, define $M_P$ to be the $d$-dimensional 0-1 matrix for which the length of the $i^{th}$ dimension of $M_P$ is the same as the number of distinct values for the $i^{th}$ coordinate of the points in $P$ for each $i$, the indices of the $i^{th}$ dimension of $M_P$ correspond to the values of the $i^{th}$ coordinate of the points in $P$ in order, and $M_P$ has a one in each entry corresponding to a point of $P$ and a zero in each other entry. 

The next lemma is proved analogously to Lemma \ref{tardos_blank}. Note for $d > 2$ that $S(M_P, k)$ is defined analogously to the definition for $d = 2$.

\begin{lem} 
For all finite subsets $P \subseteq \mathbb{R}^d$, $\ex(n, S(M_P,k),d) \le (k+1)^d \ex(\lceil \frac{n}{k+1} \rceil, M_P, d)$.
\end{lem}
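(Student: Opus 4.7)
The plan is to mimic the two-dimensional proof of Lemma \ref{tardos_blank} verbatim, replacing the $2$-fold partitioning by a $d$-fold partitioning. Since all the ingredients of that proof (padding with zeros, intersection with a periodic submatrix, and a union bound argument) generalize coordinate-by-coordinate, the extension should be essentially mechanical.

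First I would let $A$ be a $d$-dimensional 0-1 matrix with all side lengths equal to $n$ that achieves $\ex(n, S(M_P, k), d)$ ones and avoids $S(M_P, k)$. I would then pad $A$ with zeros along each of the $d$ coordinate directions to get $A'$, a 0-1 matrix with all side lengths equal to $(k+1)\lceil n/(k+1) \rceil$, which still avoids $S(M_P, k)$ since we have only added zero entries. Next, for each tuple $(x_1, \ldots, x_d) \in \{1, \ldots, k+1\}^d$, I would define $A_{x_1, \ldots, x_d}$ to be the $d$-dimensional submatrix of $A'$ obtained by selecting, in each coordinate direction $i$, those indices $r_i$ satisfying $(k+1) \mid (r_i - x_i)$. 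Each such submatrix has side length $\lceil n/(k+1) \rceil$ in every direction.

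The key step is then to observe that each $A_{x_1, \ldots, x_d}$ must avoid $M_P$. If some submatrix $A_{x_1, \ldots, x_d}$ contained $M_P$, then the corresponding ones in $A'$ (and hence in $A$ after undoing the padding) would form a copy of $M_P$ whose selected indices in each coordinate direction are separated by multiples of $k+1$, i.e., with at least $k$ unused indices between consecutive selected ones in every direction. That is exactly the pattern of ones witnessing a copy of $S(M_P, k)$ in $A$, contradicting $A$ being $S(M_P, k)$-free.

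Finally, summing the at most $\ex(\lceil n/(k+1) \rceil, M_P, d)$ ones in each of the $(k+1)^d$ submatrices gives the desired bound, since every one of $A$ lies in exactly one submatrix $A_{x_1, \ldots, x_d}$. I do not anticipate any real obstacle: the main thing to check carefully is that the $d$-dimensional generalization $S(M_P, k)$ is indeed defined coordinate-by-coordinate (which the statement immediately before the lemma confirms), so that the congruence condition $(k+1) \mid (r_i - x_i)$ in each direction correctly captures containment of $S(M_P, k)$.
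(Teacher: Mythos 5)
Your proposal is correct and matches the paper's intended argument: the paper states only that this lemma "is proved analogously to Lemma \ref{tardos_blank}," and your $d$-fold version (zero-padding, partitioning into $(k+1)^d$ congruence-class submatrices, and noting each must avoid $M_P$ lest $A$ contain $S(M_P,k)$) is exactly that analogue.
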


As with the last lemma, the proof of the next lemma is analogous to the $2$-dimensional case in Lemma \ref{ps_blank}.

\begin{lem}
Suppose that $P \subseteq \mathbb{R}^d$ is a finite subset in which the differences between all consecutive values for each coordinate are in $[c_1, c_2]$. Let $P'$ be a dilation of $P$ by a factor of $q > 1$, i.e. for every point $(x_1, \dots, x_d) \in P$, the point $(qx_1, \dots, qx_d) \in P'$. Then $\px(n, P',d) \le (\lceil\frac{q c_2}{c_1}\rceil+1)^d \px(\lceil \frac{\lceil \frac{n}{c_1} \rceil}{\lceil\frac{q c_2}{c_1}\rceil+1} \rceil c_1, P, d)$.
\end{lem}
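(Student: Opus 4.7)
The plan is to mirror the proof of Lemma~\ref{ps_blank} in $d$ dimensions, with the role of the $c \times c$ squares played by open $c_1$-cubes. First I fix a sequence of open $P'$-free subsets $S_t \subseteq [0,n]^d$ with $\mu_d(S_t) \to \px(n, P', d)$, set $K := \lceil qc_2/c_1 \rceil + 1$, and dissect $[0, \lceil n/c_1 \rceil c_1]^d$ into open $c_1$-cubes, discarding the measure-zero boundary. For each tuple $(u_1, \dots, u_d) \in \{1, \dots, K\}^d$, I retain only those cubes whose index along each axis $i$ is congruent to $u_i$ modulo $K$, then contract the discarded slabs along every axis and translate what remains into $[0, \lceil \lceil n/c_1 \rceil / K \rceil c_1]^d$ to obtain a set $[S_t]_{u_1,\dots,u_d}$.

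Two things then need checking. Openness of $[S_t]_{u_1, \dots, u_d}$ is immediate since it is a translate of a disjoint union of open sets (the intersections of $S_t$ with the retained open cubes). The core step is verifying that each $[S_t]_{u_1, \dots, u_d}$ is $P$-free. If such a set contained $P$ through coordinate maps $g_i \colon C_i(P) \to C_i([S_t]_{u_1,\dots,u_d})$, I would lift via the inverse of the contraction to get $h_i \colon C_i(P) \to C_i(S_t)$. For any consecutive pair $y < y+\delta$ in $C_i(P)$, monotonicity gives $g_i(y+\delta) - g_i(y) \ge \delta \ge c_1$, which, together with the retained open cubes having width $c_1$, forces these two images into distinct retained slabs along axis $i$; lifting therefore adds at least $(K-1)c_1 \ge qc_2$ to the gap, so $h_i(y+\delta) - h_i(y) \ge \delta + (K-1)c_1 \ge q\delta$, using $(K-1)c_1 \ge qc_2 \ge (q-1)c_2 \ge (q-1)\delta$. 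Extending from consecutive pairs to arbitrary pairs by telescoping, the maps $f_i \colon C_i(P') \to C_i(S_t)$ defined by $f_i(qx) := h_i(x)$ satisfy the monotonicity $(f_i(qx) - f_i(qx'))/(qx - qx') \ge 1$ and exhibit a copy of $P'$ in $S_t$, contradicting $P'$-freeness.

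The conclusion is then routine: summing $\mu_d([S_t]_{u_1, \dots, u_d}) \le \px(\lceil \lceil n/c_1 \rceil / K \rceil c_1, P, d)$ over the $K^d$ tuples yields $\mu_d(S_t) \le K^d \px(\lceil \lceil n/c_1 \rceil / K \rceil c_1, P, d)$, and letting $t \to \infty$ gives the claim. The main obstacle, and the only step that is substantively different from the two-dimensional argument, is verifying that the lifted configuration witnesses $S_t \supseteq P'$ rather than merely $S_t \supseteq P$; this reduces to the numerical inequality $(K-1)c_1 \ge qc_2$ baked into the definition $K = \lceil qc_2/c_1 \rceil + 1$, with the bookkeeping in $d$ dimensions being a direct coordinate-wise application of the two-dimensional reasoning.
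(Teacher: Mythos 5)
Your proposal is correct and follows essentially the same route as the paper, which simply declares the $d$-dimensional statement analogous to the two-dimensional Lemma \ref{ps_blank}: the same dissection into open $c_1$-cubes, the same residue-class selection with modulus $K=\lceil qc_2/c_1\rceil+1$, contraction, and summation over the $K^d$ classes. Your explicit lifting computation showing $(K-1)c_1 \ge (q-1)\delta$ for each consecutive gap $\delta \in [c_1,c_2]$ is a correct and more detailed justification of the $P$-freeness step that the paper only asserts.
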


Using the last two lemmas, we obtain the next theorem with essentially the same proof method as in Theorem \ref{mainth}. This lets us derive corollaries about $\px(n, P, d)$ from the literature on $\ex(n, M, d)$.

\begin{thm}
For all finite subsets $P \subseteq \mathbb{R}^d$, $\px(n, P, d) = \Theta(\ex(n, M_P, d))$, where the constant in the bound depends on the differences between the consecutive values for each coordinate of $P$.
\end{thm}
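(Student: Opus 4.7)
The plan is to mimic the proof of Theorem \ref{mainth} essentially verbatim in $d$ dimensions, invoking the two lemmas immediately preceding the statement in place of Lemmas \ref{tardos_blank} and \ref{ps_blank}. I split into a lower bound and an upper bound.

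For the lower bound $\px(n, P, d) = \Omega(\ex(n, M_P, d))$, I would let $c_1$ be the minimum distance between consecutive values along any coordinate of $P$, let $c = \min(c_1, 1)/2$ (adjusted slightly so that $n/c$ is an integer, as in the proof of Theorem \ref{lowerth}), and start with an $M_P$-free $d$-dimensional 0-1 matrix $A$ of side $n/c$ achieving $\ex(n/c, M_P, d)$ ones. Build $S \subseteq [0,n]^d$ by placing, for every 1-entry of $A$, an open $d$-cube of sidelength $c$ in the corresponding location of $[0,n]^d$. Then $S$ is open, $\mu_d(S) = c^d \ex(n/c, M_P, d) = \Omega(\ex(n, M_P, d))$ by super-additivity of $\ex(n, M_P, d)$, and $S$ must be $P$-free because in any hypothetical copy of $P$ in $S$: distinct points of $P$ lie in distinct cubes (sidelength $c$ is at most the minimum coordinate-spacing of $P$); two points of $P$ sharing the $i$-th coordinate lie in cubes sharing their $i$-th index; two points of $P$ differing in the $i$-th coordinate lie in cubes with different $i$-th indices. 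So the 1-entries of $A$ containing this copy of $P$ would form a copy of $M_P$, contradicting $M_P$-freeness of $A$.

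For the upper bound $\px(n, P, d) = O(\ex(n, M_P, d))$, I would first apply the $d$-dimensional dilation lemma stated just above the theorem to reduce to subsets $P$ where consecutive coordinate values differ by exactly $1$. Given an open $P$-free $S_t \subseteq [0,n]^d$ with $\mu_d(S_t) \to \px(n, P, d)$, I approximate each $S_t$ by a union $Z_{t,r}$ of interiors of $n/r$-sidelength $d$-cubes from an axis-aligned $r \times \cdots \times r$ grid on $[0,n]^d$, including an entire cube interior iff it is contained in $S_t$. The compactness argument from the 2D proof carries over unchanged: points of $S_t$ not captured in $Z_{t,r}$ either lie on grid boundaries (measure zero) or lie in an open ball of $S_t$-radius less than $2n/r$, and the latter set has measure tending to $0$ as $r \to \infty$. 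Hence $\lim_{r \to \infty} \mu_d(Z_{t,r}) = \mu_d(S_t)$. Encoding the cube inclusions as a $d$-dimensional $r \times \cdots \times r$ 0-1 matrix $A_{t,r}$, the fact that $Z_{t,r} \subseteq S_t$ is $P$-free forces $A_{t,r}$ to avoid $S(M_P, \lceil r/n \rceil)$ (otherwise the included cubes in the pattern of $S(M_P, \lceil r/n \rceil)$ would realize a copy of $P$ in $Z_{t,r}$). Applying the $d$-dimensional analogue of Lemma \ref{tardos_blank} gives
\[
\mu_d(Z_{t,r}) \le \left(\tfrac{n}{r}\right)^d \ex\!\left(r, S(M_P, \lceil r/n\rceil), d\right) \le (1+o(1))\,\ex(n, M_P, d),
\]
and taking $r, t \to \infty$ yields the desired bound.

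The two main obstacles are the same as in the 2D case. First, the $P$-freeness verification in the lower bound must carefully use the coordinate-spacing hypothesis on $P$ in all $d$ directions simultaneously; fortunately the argument is coordinate-wise and goes through independently for each axis. Second, the openness/compactness approximation in the upper bound required the longest stretch of the 2D proof, but its core ingredients (every point of an open set lies in some axis-aligned open cube inside the set, and the family of such cubes has a maximal sidelength that is well-defined) are purely topological and translate verbatim to $\mathbb{R}^d$. With those in hand, the reductions to the two cited lemmas are mechanical.
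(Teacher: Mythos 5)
Your proposal is correct and matches the paper's intent exactly: the paper gives no separate detailed proof for this theorem, stating only that it follows "with essentially the same proof method as in Theorem \ref{mainth}" using the two preceding $d$-dimensional lemmas, which is precisely the route you take (cube construction from an $M_P$-free $d$-dimensional matrix for the lower bound; dilation normalization, grid approximation, and the $d$-dimensional $S(M_P,k)$ lemma for the upper bound). The coordinate-wise verification of $P$-freeness and the topological approximation argument carry over as you describe, so no gap.
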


Klazar and Marcus proved that $\ex(n, M, d) = O(n^{d-1})$ for every $d$-dimensional permutation matrix $M$ \cite{MT}. This bound was later sharpened in \cite{GTmat} where it was shown that $\ex(n, M, d) = 2^{O(k)}n^{d-1}$ for every $d$-dimensional permutation matrix $P$ with $k$ ones. This implies the corollary  below.

\begin{cor}
Let $P \subseteq \mathbb{R}^d$ with $|P| = k$ for which no points in $P$ have any common coordinates, and the differences between the consecutive values for each coordinate of $P$ are all at most $c$. Then $\px(n, P, d) = 2^{O(k)}n^{d-1}$, where the constant in the bound depends on $c$.
\end{cor}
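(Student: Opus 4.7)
The plan is to recognize $M_P$ as a $d$-dimensional permutation matrix and then chain the preceding theorem with the bound of \cite{GTmat} on permutation-matrix extremal numbers. First, since no two points of $P$ share any coordinate, the set of values appearing as $i$-th coordinates of points of $P$ has exactly $k$ distinct entries for every $i\in\{1,\dots,d\}$. By the definition of $M_P$ given earlier in this section, $M_P$ is therefore a $k\times\cdots\times k$ $0$-$1$ array with exactly $k$ ones, and the no-shared-coordinates hypothesis forces exactly one of these ones to appear in every axis-parallel slice. In other words, $M_P$ is a $d$-dimensional permutation matrix with $k$ ones.

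Next I would invoke the cited bound from \cite{GTmat}, that $\ex(n,M,d)=2^{O(k)}n^{d-1}$ for every $d$-dimensional permutation matrix $M$ with $k$ ones. Applied to $M=M_P$ together with the preceding theorem $\px(n,P,d)=\Theta(\ex(n,M_P,d))$, this immediately yields $\px(n,P,d)=2^{O(k)}n^{d-1}$.

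No step is genuinely difficult here: recognizing $M_P$ as a permutation matrix is the only non-mechanical observation, and everything else is a direct invocation of already-established results. The one minor bookkeeping issue is verifying that the hidden constant depends only on $c$, since the $\Theta$-constant in the preceding theorem a priori depends on all consecutive coordinate differences of $P$. These differences are bounded above by $c$ by hypothesis, and for the lower side one can first dilate $P$ by an appropriate factor and apply the $d$-dimensional analogue of Lemma \ref{ps_blank}, normalizing the minimum consecutive difference in each coordinate direction; after this rescaling the relevant distances all lie in a range controlled by $c$, so the resulting constant depends only on $c$, as claimed.
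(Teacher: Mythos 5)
Your proposal is correct and is exactly the argument the paper intends (the paper gives no explicit proof, just "this implies the corollary"): identify $M_P$ as a $d$-dimensional permutation matrix with $k$ ones, cite $\ex(n,M_P,d)=2^{O(k)}n^{d-1}$ from \cite{GTmat}, and chain it with $\px(n,P,d)=\Theta(\ex(n,M_P,d))$. One small correction on your bookkeeping step: dilation preserves the \emph{ratios} of consecutive coordinate differences, so rescaling via the $d$-dimensional analogue of Lemma \ref{ps_blank} cannot by itself bring the constant under control of $c$ alone when some gaps are much smaller than others; the cleaner route is to note that the set $P^{+}$ obtained by replacing the $j$-th smallest value in each coordinate by $jc$ contains $P$ (each map is expanding because every consecutive gap of $P$ is at most $c$), so $\px(n,P,d)\le\px(n,P^{+},d)$ by Lemma \ref{dpxcontain}, $M_{P^{+}}=M_P$, and $P^{+}$ has all consecutive gaps equal to $c$, giving a constant depending only on $c$.
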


In the same paper \cite{GTmat}, the authors proved a generalization of the K\H{o}vari-S\'{o}s-Tur\'{a}n theorem for $d$-dimensional 0-1 matrices by showing that $\ex(n, J_{k_1, \dots, k_d}, d) = O(n^{d-\alpha(k_1, \dots, k_d)})$, where $ J_{k_1, \dots, k_d}$ is the $d$-dimensional matrix of all ones with dimensions $k_1 \times \dots \times k_d$ and $\alpha(k_1, \dots, k_d) = \frac{\max(k_1, \dots, k_d)}{k_1 \dots k_d}$. As a result, for every $d$-dimensional 0-1 matrix $M$ there exists a constant $\epsilon > 0$ such that $\ex(n, M, d) = O(n^{d-\epsilon})$. This is because $M$ is contained in the $d$-dimensional all-ones matrix of the same dimensions.

\begin{cor}
For every finite subset $P \subseteq \mathbb{R}^d$ with $M_P =  J_{k_1, \dots, k_d}$, we have $\px(n, P,d) = O(n^{d-\alpha(k_1, \dots, k_d)})$.
\end{cor}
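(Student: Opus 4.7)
The plan is to apply the preceding theorem together with the cited generalization of the K\H{o}vari-S\'{o}s-Tur\'{a}n theorem, since this corollary is really just a packaging of the two. The key observation is that the hypothesis $M_P = J_{k_1,\dots,k_d}$ means $P$ is a finite point set whose corresponding 0-1 matrix is precisely the $d$-dimensional all-ones box of dimensions $k_1 \times \dots \times k_d$, so the upper bound on $\ex(n, J_{k_1,\dots,k_d}, d)$ transfers directly.

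First I would invoke the theorem stated just above the corollary: for every finite subset $P \subseteq \mathbb{R}^d$,
\[
\px(n, P, d) = \Theta(\ex(n, M_P, d)),
\]
where the implicit constants depend on the spacings between consecutive values in each coordinate of $P$. Substituting the hypothesis $M_P = J_{k_1,\dots,k_d}$ yields $\px(n, P, d) = O(\ex(n, J_{k_1,\dots,k_d}, d))$. Then I would plug in the cited bound $\ex(n, J_{k_1,\dots,k_d}, d) = O(n^{d-\alpha(k_1,\dots,k_d)})$ from \cite{GTmat}, giving
\[
\px(n, P, d) = O\bigl(n^{d-\alpha(k_1,\dots,k_d)}\bigr),
\]
as required.

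Because the corollary is formally a composition of two previously stated results, there is no real obstacle; the only bookkeeping point to mention is that the constant in the bound absorbs both the constant from the $\Theta$ in the theorem (which depends on the gaps between consecutive coordinate values of $P$) and the constant from the generalized K\H{o}vari-S\'{o}s-Tur\'{a}n bound (which depends on $k_1,\dots,k_d$ and $d$). No new construction, integration, or combinatorial argument is needed, which matches the pattern of the analogous $d=2$ corollary earlier in the paper.
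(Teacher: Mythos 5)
Your proposal is correct and matches the paper's (implicit) argument exactly: the corollary is stated without proof precisely because it follows by composing the theorem $\px(n, P, d) = \Theta(\ex(n, M_P, d))$ with the cited bound $\ex(n, J_{k_1, \dots, k_d}, d) = O(n^{d-\alpha(k_1, \dots, k_d)})$ from \cite{GTmat}. Your remark about the constants absorbing both the coordinate-spacing dependence and the $k_1, \dots, k_d$ dependence is accurate bookkeeping.
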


\begin{cor}
For every finite subset $P \subseteq \mathbb{R}^d$ there exists a constant $\epsilon > 0$ such that $\px(n, P, d) = O(n^{d-\epsilon})$.
\end{cor}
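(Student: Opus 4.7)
The plan is to reduce this to the previous corollary by using the monotonicity of $\px(\cdot, \cdot, d)$ under containment, exactly as the $2$-dimensional version of the statement (in Section \ref{01cor}) was deduced from the K\H ovari--S\'os--Tur\'an theorem. Given a finite $P \subseteq \mathbb{R}^d$, let $k_i$ be the number of distinct values taken by the $i^{\text{th}}$ coordinate among the points of $P$, so that $M_P$ has dimensions $k_1 \times \cdots \times k_d$.

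First I would construct an auxiliary finite set $P' \subseteq \mathbb{R}^d$ with $M_{P'} = J_{k_1, \dots, k_d}$ by placing a point at every site of a $k_1 \times \cdots \times k_d$ axis-aligned grid whose coordinate gaps coincide with the consecutive coordinate gaps in $P$; equivalently, $P'$ is obtained from $P$ by ``filling in'' zeros. By construction $P'$ contains $P$ in the sense of the generalized containment for subsets of $\mathbb{R}^d$: the identity maps on each $C_i(P)$ send $P$ into $P'$ and have slope $1$ everywhere. Hence by Lemma \ref{dpxcontain},
\[
\px(n, P, d) \le \px(n, P', d).
\]

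Next I would invoke the previous corollary (the $d$-dimensional K\H ovari--S\'os--Tur\'an consequence), applied to $P'$, to get
\[
\px(n, P', d) = O\!\left(n^{\,d - \alpha(k_1, \dots, k_d)}\right),
\]
where $\alpha(k_1, \dots, k_d) = \max(k_1, \dots, k_d) / (k_1 \cdots k_d) > 0$. Setting $\epsilon = \alpha(k_1, \dots, k_d)$ then gives $\px(n, P, d) = O(n^{d-\epsilon})$, as required.

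There is essentially no obstacle here; the only thing to be careful about is that the previous corollary gives the bound for a specific choice of $P'$ (whose coordinate-gaps we picked to match those of $P$), and the implicit constant in the $O$-notation depends on those gaps. Since $P$ is fixed, this dependence is absorbed into the constant, and $\epsilon$ depends only on the combinatorial type of $M_P$. A brief remark to this effect would close the proof.
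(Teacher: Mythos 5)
Your proof is correct and takes essentially the same route the paper intends: reduce to the all-ones pattern $J_{k_1, \dots, k_d}$ by containment monotonicity and invoke the generalized K\H{o}vari--S\'{o}s--Tur\'{a}n bound, taking $\epsilon = \alpha(k_1, \dots, k_d) > 0$. The only (immaterial) difference is that you perform the containment step on the point sets via Lemma \ref{dpxcontain} (filling $P$ out to the full grid $P'$ with $M_{P'} = J_{k_1,\dots,k_d}$) rather than on the matrices ($M_P$ contained in $J_{k_1,\dots,k_d}$), and your closing remark correctly accounts for the dependence of the implicit constant on the coordinate gaps of the fixed set $P$.
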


If $M$ is a $k \times k$ permutation matrix for $k \ge 2$, let $M_i$ be the $3$-dimensional 0-1 matrix of dimensions $k \times k \times i$ for which entry $(x_1, x_2, x_3)$ of $M_i$ is 1 if and only if entry $(x_1, x_2)$ of $M$ is 1. In \cite{gkst}, we proved a more general result which implies that $\ex(n, M_2, 3) = \Theta(n^{\frac{5}{2}})$ and $\ex(n, M_3, 3) = \Theta(n^{\frac{8}{3}})$. This implies the next corollary about $\px(n, P, d)$.

\begin{cor}
Let $P$ be a set of $k$ points in $\mathbb{R}^2$ with all coordinates distinct. Let $P_i$ be the set of $i k$ points in $\mathbb{R}^3$ which contains $(x, y, j)$ for each point $(x, y) \in P$ and each $j = 1, 2, \dots, i$. Then $\px(n, P_2, 3) = \Theta(n^{\frac{5}{2}})$ and $\px(n, P_3, 3) = \Theta(n^{\frac{8}{3}})$.
\end{cor}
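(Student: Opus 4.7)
The plan is to apply the $d=3$ case of the higher-dimensional analogue of Theorem \ref{mainth} (stated just above the corollary as $\px(n, P, d) = \Theta(\ex(n, M_P, d))$ for all finite $P \subseteq \mathbb{R}^d$) together with the bounds from \cite{gkst} quoted in the paragraph immediately preceding the corollary. The only substantive check is to verify that the 3-dimensional matrix $M_{P_i}$ arising from the $P \mapsto M_P$ construction coincides with the matrix $M_i$ appearing in the cited results.

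First I would observe that since the $k$ points of $P$ have pairwise distinct $x$-coordinates and pairwise distinct $y$-coordinates, the 2-dimensional matrix $M_P$ is a $k \times k$ permutation matrix; call it $M$. By the definition of $P_i$, the points of $P_i$ use exactly $k$ distinct values of the first coordinate, $k$ distinct values of the second coordinate, and $i$ distinct values of the third coordinate (namely $1, 2, \dots, i$), and the projection forgetting the third coordinate sends each layer of $P_i$ onto $P$. Reading off the $d=3$ version of $M_{P}$ defined just before the higher-dimensional main theorem, $M_{P_i}$ is therefore the $k \times k \times i$ 0-1 matrix whose entry $(a,b,c)$ is $1$ iff entry $(a,b)$ of $M$ is $1$, independently of $c$. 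This is precisely $M_i$ in the notation of \cite{gkst}.

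Next I would invoke the $d=3$ main theorem to conclude
\[
\px(n, P_i, 3) \;=\; \Theta(\ex(n, M_{P_i}, 3)) \;=\; \Theta(\ex(n, M_i, 3)),
\]
where the constants depend only on the gaps between consecutive coordinate values of $P_i$, which are determined by $P$ together with the unit spacing in the $z$-direction. Substituting the cited bounds $\ex(n, M_2, 3) = \Theta(n^{5/2})$ and $\ex(n, M_3, 3) = \Theta(n^{8/3})$ then yields exactly $\px(n, P_2, 3) = \Theta(n^{5/2})$ and $\px(n, P_3, 3) = \Theta(n^{8/3})$.

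There is no real obstacle beyond the bookkeeping in the identification $M_{P_i} = M_i$; all of the analytic and combinatorial work has already been done by the higher-dimensional main theorem and by \cite{gkst}. The only mild care needed is to confirm that the implicit constants in $\Theta$ may legitimately depend on $P$ (through the coordinate gaps of $P_i$), which is allowed by the statement of the main theorem.
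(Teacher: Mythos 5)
Your proposal is correct and follows exactly the route the paper intends: the paper states this corollary without proof, as an immediate consequence of the higher-dimensional version of Theorem \ref{mainth} together with the bounds $\ex(n, M_2, 3) = \Theta(n^{5/2})$ and $\ex(n, M_3, 3) = \Theta(n^{8/3})$ from \cite{gkst}, after identifying $M_{P_i}$ with $M_i$ for the permutation matrix $M = M_P$. Your explicit verification of that identification is the only substantive step, and it is done correctly.
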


We will next prove a much stronger result than the last corollary. It is a generalization of our strengthening of the K\H{o}vari-S\'{o}s-Tur\'{a}n theorem. For any $P \subseteq \mathbb{R}^d$ and for each integer $t \ge 2$ and real $c > 0$, let $Q_{P, t, c}$ denote the set of points $\left\{(x_1, \dots, x_d, y): (x_1, \dots, x_d) \in P \text{ and } y \in \left\{c, 2c, \dots, t c\right\}  \right\}$. For example, if $P \subseteq \mathbb{R}^2$ is a segment, then $Q_{P, 2, c}$ is a $3$-dimensional set of points that looks like an equal sign ($=$) and $Q_{P, 3, c}$ is a $3$-dimensional set of points that looks like an equivalence symbol ($\equiv$). 

The following proof is like the proof of Theorem \ref{kstcont} in Section \ref{infinite_p}, but more general. Part of the idea for this result is based on the main result of our paper \cite{gkst} for $d$-dimensional 0-1 matrices.

\begin{thm}\label{genkst}
For all $P \subseteq \mathbb{R}^d$ and fixed $t \ge 2$, $\px(n, Q_{P, t, c}, d+1) = O(\px(n, P, d)^{\frac{1}{t}} n^{d+1-\frac{d}{t}} + n^d)$, where the constants in the bound depend on $t$ and $c$.
\end{thm}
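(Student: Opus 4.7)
The plan is to mirror the proof of Theorem \ref{kstcont}, with the single $x$-slice replaced by a $d$-dimensional slice and the ``segment of length $s$'' estimate replaced by the bound $\px(n,P,d)$. Let $S \subseteq [0,n]^{d+1}$ be an open $Q_{P,t,c}$-free subset, and introduce the auxiliary $(d+t)$-dimensional set
$$S' = \{(x_1,\dots,x_d,y_1,\dots,y_t) : (x_1,\dots,x_d,y_i) \in S \text{ for all } i, \text{ and } y_i - y_{i+1} > c \text{ for all } i < t\}.$$
The same open-ball argument used in Theorems \ref{t=2} and \ref{kstcont} shows $S'$ is open. The strategy is to sandwich $\mu_{d+t}(S')$ between an upper bound in terms of $\px(n,P,d)$ and a lower bound in terms of $\mu_{d+1}(S)$, then invert via Jensen's inequality.

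For the upper bound, fix $(y_1,\dots,y_t)$ satisfying the spacing constraints. The slice $T_{y_1,\dots,y_t} := \{(x_1,\dots,x_d) : (x_1,\dots,x_d,y_i) \in S \text{ for all } i\}$ is a finite intersection of $d$-dimensional open slices of $S$, so it is open. If $T_{y_1,\dots,y_t}$ contained a copy of $P$ via maps $f_1,\dots,f_d$, then combining these with the map $jc \mapsto y_{t+1-j}$ on the last coordinate would exhibit a copy of $Q_{P,t,c}$ inside $S$: the spacing $y_i - y_{i+1} > c$ supplies the needed slope-$\ge 1$ condition on the final coordinate. Since $S$ is $Q_{P,t,c}$-free, every such slice is $P$-free, so $\mu_d(T_{y_1,\dots,y_t}) \le \px(n,P,d)$. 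Integrating over the region of admissible $(y_1,\dots,y_t)$, whose $t$-dimensional measure is $O(n^t)$ by Lemma \ref{simplex}, and using Fubini (which applies since $S'$ is open) yields $\mu_{d+t}(S') = O(n^t \px(n,P,d))$, with constant depending on $t$ and $c$.

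For the lower bound, for each $\vec{x} \in [0,n]^d$ let $m_{\vec{x}} = \mu_1(\{y : (\vec{x},y) \in S\})$. Using the cumulative-measure substitution $Q_{\vec{x},y} = \{r : (\vec{x},r) \in S \text{ and } r \le y\}$ exactly as in Theorem \ref{kstcont}, together with the fact that the $1$-dimensional slice $\{y : (\vec x, y) \in S\}$ is a countable disjoint union of open intervals and with Lemma \ref{simplex}, the fiber of $S'$ over $\vec{x}$ has $\mu_t$-measure at least $\max(m_{\vec{x}} - ct, 0)^t / t^t$. Fubini gives measurability of $m_{\vec{x}}$, so
$$\mu_{d+t}(S') \ge \int_{[0,n]^d} \frac{\max(m_{\vec{x}} - ct, 0)^t}{t^t} \, d\mu_d(\vec{x}).$$
The map $z \mapsto \max(z-ct,0)^t/t^t$ is convex on $\mathbb{R}$, and $\int_{[0,n]^d} m_{\vec{x}} \, d\mu_d(\vec{x}) = \mu_{d+1}(S)$ by Fubini. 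Applying Jensen's inequality with respect to normalized Lebesgue measure on $[0,n]^d$ and chaining with the upper bound gives
$$n^d \cdot \frac{\max(\mu_{d+1}(S)/n^d - ct, 0)^t}{t^t} \le \mu_{d+t}(S') = O(n^t \px(n,P,d)).$$
If $\mu_{d+1}(S)/n^d \le ct$ then $\mu_{d+1}(S) = O(n^d)$; otherwise solving the inequality yields $\mu_{d+1}(S) = O(\px(n,P,d)^{1/t} n^{d+1-d/t})$. Combining, $\px(n, Q_{P,t,c}, d+1) = O(\px(n,P,d)^{1/t} n^{d+1-d/t} + n^d)$.

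The main obstacle is setting up the fiber lower bound in arbitrary dimension with full rigor: the cumulative-measure reparametrization must be applied fiber by fiber in the $\vec{x}$ variable, one must verify that the resulting $\mu_t$-measure bound per fiber is indeed $\max(m_{\vec x}-ct,0)^t/t^t$ (this is where Lemma \ref{simplex} enters), and measurability of the integrand has to be justified before invoking Fubini and Jensen. Once those technical points are handled exactly as in the $d=1$ case of Theorem \ref{kstcont}, the remainder is a clean substitution of $\px(n,P,d)$ for the one-dimensional horizontal-segment bound $s$.
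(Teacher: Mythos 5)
Your proposal is correct and follows essentially the same route as the paper: the auxiliary open set $S'$, the Fubini upper bound $O(\px(n,P,d)\,n^t)$ via $P$-free slices, the cumulative-measure lower bound $\max(m_{\vec x}-ct,0)^t/t^t$ per fiber using Lemma \ref{simplex}, and Jensen to invert. The only difference is cosmetic: you apply the measure-theoretic Jensen inequality once over $[0,n]^d$ with normalized Lebesgue measure, whereas the paper iterates a one-variable Jensen coordinate by coordinate; both yield the same inequality $n^d f(\mu_{d+1}(S)/n^d) = O(\px(n,P,d)\,n^t)$.
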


\begin{proof}
Let $S$ be an open $Q_{P, t, c}$-free subset of $[0,n]^{d+1}$. Let $S'$ be the $(t+d)$-dimensional set of points of the form $(x_1, \dots, x_d, y_1, \dots y_t)$ for which $(x_1, \dots, x_d, y_i) \in S$ for each $i = 1, 2, \dots, t$ and $y_i - y_{i+1} > c$ for each $i = 1, \dots, t-1$. First, we note that $S'$ is open. To see why this is true, define $T_1 = \left\{(x_1, \dots, x_d, y_1, \dots y_t) \in \mathbb{R}^{t+d}: (x_1, \dots, x_d, y_i) \in S \text{ for all } i = 1, \dots, t  \right\}$. Define $T_2 = \left\{(x_1, \dots, x_d, y_1, \dots y_t) \in \mathbb{R}^{t+d}: y_i - y_{i+1} > c \text{ for all } i = 1, 2, \dots, t-1 \right\}$. Clearly $T_2$ is an open subset of $\mathbb{R}^{t+d}$. Since $S$ is open, for every point $(x_1, \dots, x_d, y) \in S$ there exists $r > 0$ such that the open ball of radius $r$ centered at $(x_1, \dots, x_d, y)$ is a subset of $S$, i.e. $B((x_1, \dots, x_d, y), r) \subseteq S$. For each $(x_1, \dots, x_d,  y_1, \dots y_t) \in T_1$, we have $(x_1, \dots, x_d, y_i) \in S$ for all $i = 1, 2, \dots, t$, so there exist $r_i > 0$ for each $i = 1, 2, \dots, t$ such that $B((x_1, \dots, x_d, y_i), r_i) \subseteq S$ for all $i = 1, 2, \dots, t$. For every $(x_1', \dots, x_d', y'_1, \dots y'_t) \in B((x_1, \dots, x_d, y_1, \dots y_t), \min(r_1, r_2, \dots, r_t))$, we have $\sum_{j = 1}^d (x_j'-x_j)^2+\sum_{i = 1}^t (y'_i-y_i)^2 < \min(r_1^2, r_2^2, \dots, r_t^2)$, so $\sum_{j = 1}^d (x_j'-x_j)^2+(y'_i-y_i)^2 < r_i^2$ for all $i = 1, 2, \dots, t$, which implies that $(x_1', \dots, x_d', y'_i) \in S$ for all $i = 1, 2, \dots, t$, so $(x_1', \dots, x_d', y'_1, \dots y'_t) \in T_1$. Thus $B((x_1, \dots, x_d, y_1,\dots y_t), \min(r_1, r_2, \dots, r_t)) \subseteq T_1$, so $T_1$ is open. Hence $S'$ is also open, since $S' = T_1 \cap T_2$.

The points $(x_1, \dots, x_d, y_1, \dots y_t) \in S'$ must satisfy $0 < y_t < n-(t-1)c$ and $y_{i+1}+c < y_i < n-(i-1)c$ for all $i = 1, \dots, t-1$. Thus the projection of these points onto the last $t$ coordinates is a $t$-dimensional solid of volume $\Theta(n^t)$ by Lemma \ref{simplex}. For each fixed $y_1, \dots, y_t$ with $y_1, \dots, y_t \in (0, n)$, let $S''_{y_1, \dots, y_t}$ denote the set of points $(a_1, \dots, a_d, b_1, \dots, b_t) \in S'$ with $b_i = y_i$ for each $i$. Then $\mu_d(S''_{y_1, \dots, y_t}) \le \px(n, P, d)$ or else $S$ would contain $Q_{P, t}$. Since $S'$ is open, by Fubini's theorem the function $\mu_d(S''_{y_1, \dots, y_t}): [0, n]^t \rightarrow \mathbb{R}$ is a measurable function, so it is Lebesgue integrable on $[0, n]^t$. Thus $\mu_{t+d}(S') = \int_{(y_1, \dots, y_t) \in [0, n]^t} \mu_d(S''_{y_1, \dots, y_t}) d\mu_t(y_1, \dots, y_t) = O(\px(n, P, d) n^t)$.

For all $(x_1, \dots, x_d) \in [0, n]^d$, let $S'_{x_1, \dots, x_d}$ denote the set of points $(a_1, \dots, a_d, b_1,\dots, b_t) \in S'$ with $a_j = x_j$ for each $1 \le j \le d$, let $S_{x_1, \dots, x_d}$ denote the set of points $(a_1, \dots, a_d, b) \in S$ with $a_j = x_j$ for each $1 \le j \le d$, and let $m_{x_1, \dots, x_d} = \mu_1(S_{x_1, \dots, x_d})$. For $i < d$, let $S_{x_1, \dots, x_i}$ denote the set of points $(a_1, \dots, a_d, b) \in S$ with $a_j = x_j$ for each $1 \le j \le i$, and let $m_{x_1, \dots, x_i} = \mu_{d+1-i}(S_{x_1, \dots, x_i})$.

For all $x_1, \dots, x_d$ with $m_{x_1, \dots, x_d} \ge c t$, we must have $\mu_t(S'_{x_1, \dots, x_d}) \ge \frac{\max(m_{x_1, \dots, x_d}-c t,0)^t}{t^t}$. To see why this is true, suppose that $m_{x_1, \dots, x_d} \ge c t$. For each $(x_1, \dots, x_d, y) \in S_{x_1, \dots, x_d}$, let $Q_{x_1, \dots, x_d, y} = \left\{ (x_1, \dots, x_d, r): (x_1, \dots, x_d, r) \in S \text{ and } r \le y\right\}$. Note that 

\begin{align*}
\left\{(y_1, \dots, y_t): (\forall i \le t) ((x_1, \dots, x_d, y_i) \in S) \text{ and } (\forall i \le t-1) (\mu_1(Q_{x_1, \dots, x_d,y_{i+1}})+c < \mu_1(Q_{x_1, \dots, x_d,y_i})) \right\}     \subseteq  \\
   \left\{(y_1, \dots, y_t):  (\forall i \le t)  ((x_1, \dots, x_d, y_i) \in S) \text{ and }  (\forall i \le t-1) (y_{i+1}+c < y_i) \right\}.
\end{align*} 

\noindent Then we have $\mu_t(S'_{x_1, \dots, x_d}) = $

\footnotesize
\begin{align*}
\int_{y_t \in (0, n-(t-1)c)} \int_{y_{t-1} \in (y_t+c, n-(t-2)c)} \dots \int_{y_1 \in (y_2+c, n)} \prod_{i = 1}^{t} 1_{(x_1, \dots, x_d,y_i) \in S} d \mu_1(y_1) d \mu_1(y_2) \dots d \mu_1(y_t) \ge \\
\int_{y_t \in (0, n-(t-1)c)} \dots \int_{y_1 \in (y_2+c, n)} (\prod_{i = 1}^{t} 1_{(x_1, \dots, x_d,y_i) \in S}) (\prod_{i = 1}^{t-1} 1_{\mu_1(Q_{x_1, \dots, x_d,y_{i+1}})+c < \mu_1(Q_{x_1, \dots, x_d,y_i})})d \mu_1(y_1) d \mu_1(y_2) \dots d \mu_1(y_t).
\end{align*}
\normalsize

Given any $(y_1, \dots, y_t)$ with $(x_1, \dots, x_d, y_i) \in S$ for each $i \le t$, note that $\mu_1(Q_{x_1, \dots, x_d,y_i}) \in (0, m_{x_1, \dots, x_d})$ for each $i \le t$. Since $S$ is open, we can write $S_{x_1, \dots, x_d}$ as a countable union of intervals $I_j$ with open endpoints. Thus there are only countably many $b \in (0, m_{x_1, \dots, x_d})$ for which there does not exist $r$ such that $\mu_1(Q_{x_1, \dots, x_d,r}) = b$. Since countable sets have measure zero, the last integral is equal to

\begin{align*}
\int_{z_t \in (0, m_{x_1, \dots, x_d}-(t-1)c)} \int_{z_{t-1} \in (z_t+c, m_{x_1, \dots, x_d}-(t-2)c)} \dots \int_{z_1 \in (z_2+c, m_{x_1, \dots, x_d})} 1 d \mu_1(z_1) d \mu_1(z_2) \dots d \mu_1(z_t) \ge \\
 \frac{(m_{x_1, \dots, x_d}-c t)^t}{t^t}.
\end{align*}

\noindent where the last inequality is by Lemma \ref{simplex}. Thus by Fubini's theorem we have $\mu_{t+d}(S') = \int_{(x_1, \dots, x_d) \in [0, n]^d}\mu_t(S'_{x_1, \dots, x_d}) d\mu_d(x_1, \dots, x_d) \ge \int_{(x_1, \dots, x_d) \in [0, n]^d} \frac{\max((m_{x_1, \dots, x_d}-c t,0))^t}{t^t} d\mu_d(x_1, \dots, x_d)$.

Let $f: \mathbb{R} \rightarrow \mathbb{R}$ be defined by $f(z) =  \frac{\max(z-c t,0)^t}{t^t}$, so $f(m_{x_1, \dots, x_d}) =  \frac{\max(m_{x_1, \dots, x_d}-c t,0)^t}{t^t}$ and $f$ is convex. We can rewrite the last inequality in the last paragraph in the form $\mu_{t+d}(S') \ge \int_{(x_1, \dots, x_d) \in [0, n]^d} f(m_{x_1, \dots, x_d}) d\mu_d(x_1, \dots, x_d)$. With the inequality at the end of the second paragraph, we have $\int_{(x_1, \dots, x_d) \in [0, n]^d} f(m_{x_1, \dots, x_d}) d\mu_d(x_1, \dots, x_d) = O(\px(n, P, d) n^t)$. 

For each $j \le d-1$, by Jensen's inequality we have $n^{d-j} \int_{(x_1, \dots, x_j) \in [0, n]^j} f(\frac{m_{x_1, \dots, x_j}}{n^{d-j}}) d \mu_j(x_1, \dots, x_j) \le  n^{d-j-1} \int_{(x_1, \dots, x_{j}) \in [0, n]^{j}} \int_{x_{j+1} \in [0, n]} f(\frac{m_{x_1, \dots, x_{j+1}}}{n^{d-j-1}}) d \mu_1(x_{j+1}) d \mu_{j}(x_1, \dots, x_{j})$. Furthermore, the right side of this inequality equals  $n^{d-j-1} \int_{(x_1, \dots, x_{j+1}) \in [0, n]^{j+1}} f(\frac{m_{x_1, \dots, x_{j+1}}}{n^{d-j-1}}) d \mu_{j+1}(x_1, \dots, x_{j+1})$. So we have $n^{d-j} \int_{(x_1, \dots, x_j) \in [0, n]^j} f(\frac{m_{x_1, \dots, x_j}}{n^{d-j}}) d \mu_j(x_1, \dots, x_j) = O(\px(n, P, d) n^t)$ for all $j = 1, \dots, d-1$ by the inequality at the end of the last paragraph.

Thus by Jensen's inequality combined with the preceding inequality, we have $n^d f(\frac{\mu_{d+1}(S)}{n^d}) \le    n^{d-1} \int_{x_1 \in [0, n]} f(\frac{m_{x_1}}{n^{d-1}}) d \mu_1(x_1)     = O(\px(n, P, d)  n^t)$, so $n^d (\frac{\frac{\mu_{d+1}(S)}{n^d}}{t}-c)^t = O(\px(n, P, d) n^t)$ or else $\frac{\mu_{d+1}(S)}{n^d} \le c t$. Thus $\mu_{d+1}(S) = O(\px(n, P, d)^{\frac{1}{t}} n^{d+1-\frac{d}{t}}+n^d)$.
\end{proof}

As corollaries of Theorem \ref{genkst}, we derive sharp bounds on $\px(n, P, 3)$ for some bounded infinite subsets $P \subseteq \mathbb{R}^3$. We start with a corollary that can be applied to many bounded infinite subsets $P$.

\begin{cor}\label{master3d}
If $P \subseteq \mathbb{R}^2$ is any set with multiple points such that $\px(n, P) = O(n)$ and $c > 0$, then $\px(n, Q_{P,2, c},3) = \Theta(n^{\frac{5}{2}})$. If $P \subseteq \mathbb{R}^2$ is any set that has at least three points with different $x$-coordinates such that $\px(n, P) = O(n)$ and $c > 0$, then $\px(n, Q_{P,3, c},3) = \Theta(n^{\frac{8}{3}})$.
\end{cor}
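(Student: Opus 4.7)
The upper bounds follow immediately by substituting $\px(n, P) = O(n)$ into the conclusion of Theorem \ref{genkst} with $d = 2$: the $t = 2$ case gives $\px(n, Q_{P, 2, c}, 3) = O(n^{1/2} \cdot n^{2} + n^2) = O(n^{5/2})$, and the $t = 3$ case gives $\px(n, Q_{P, 3, c}, 3) = O(n^{1/3} \cdot n^{7/3} + n^2) = O(n^{8/3})$, with the $n^2$ term absorbed in each case.

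For the matching lower bounds, the plan is to pick a small subset $P' \subseteq P$ so that the associated 3D 0-1 matrix $M_{Q_{P', t, c}}$ has a known extremal function of the correct order, and then transfer the bound to $\px$ using Lemma \ref{dpxcontain} combined with the 3D analogue of Theorem \ref{mainth}. Because $P' \subseteq P$ implies $Q_{P', t, c} \subseteq Q_{P, t, c}$ (the identity coordinate maps witness the containment), Lemma \ref{dpxcontain} yields
\[
\px(n, Q_{P, t, c}, 3) \;\ge\; \px(n, Q_{P', t, c}, 3) \;=\; \Theta(\ex(n, M_{Q_{P', t, c}}, 3)).
\]

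I would split on whether $P$ contains a pair of points with pairwise distinct $x$- and $y$-coordinates. If it does, take such a pair $P' = \{p_1, p_2\}$; then $M_{P'}$ is a $2 \times 2$ permutation matrix and $M_{Q_{P', t, c}}$ is exactly the stacked matrix $M_t$ in the notation of the result from \cite{gkst} cited just before this corollary, giving $\ex(n, M_2, 3) = \Theta(n^{5/2})$ or $\ex(n, M_3, 3) = \Theta(n^{8/3})$. If no such pair exists, a short pigeonhole argument shows that $P$ lies on a single horizontal or single vertical line. In that case, for $t = 2$ any two points of $P$ give $M_{Q_{P', 2, c}} = J_{1, 2, 2}$ or $J_{2, 1, 2}$, whose 3D extremal function is $n \cdot \ex(n, J_{2, 2}) = \Theta(n^{5/2})$ by slicing along the singleton dimension; for $t = 3$ the hypothesis that $P$ has three points with distinct $x$-coordinates forces the line to be horizontal, and taking these three points gives $M_{Q_{P', 3, c}} = J_{1, 3, 3}$, whose 3D extremal function is $n \cdot \ex(n, J_{3, 3}) = \Theta(n^{8/3})$, again by slicing.

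The main obstacle is really just the case split; once a good $P'$ has been chosen, all the substantive work has already been done in Theorem \ref{genkst}, in the 3D version of Theorem \ref{mainth}, and in the cited stacked-permutation bound. The slicing arguments in the collinear case are elementary: fixing the singleton coordinate in $J_{1, s, s}$ reduces the 3D extremal problem to $n$ independent copies of the 2D problem $\ex(n, J_{s, s})$, known to be $\Theta(n^{3/2})$ for $s = 2$ and $\Theta(n^{5/3})$ for $s = 3$ by the sharpness of the K\H{o}vari-S\'{o}s-Tur\'{a}n theorem in these cases.
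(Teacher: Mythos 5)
Your proof is correct, and your upper bound is exactly the paper's: substitute $\px(n,P)=O(n)$ into Theorem \ref{genkst} with $d=2$. For the lower bounds, however, you take a genuinely different route. The paper projects $Q_{P,t,c}$ onto its first and last coordinates: the resulting planar set contains a $2\times 2$ (resp.\ $3\times 3$) grid of points, whose planar extremal function is $\Theta(n^{3/2})$ (resp.\ $\Theta(n^{5/3})$) by Corollary \ref{kst2p}, and the projection lemma (Lemma \ref{dproj}, together with Lemma \ref{pxcontain}) lifts this to $\Omega(n\cdot n^{3/2})=\Omega(n^{5/2})$ (resp.\ $\Omega(n^{8/3})$) in three dimensions; no case analysis is needed there, since any two points of $P$ with distinct $x$-coordinates already yield the grid once the middle coordinate is projected away. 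You instead restrict to a finite $P'\subseteq P$, pass to the $3$-dimensional matrix $M_{Q_{P',t,c}}$ via Lemma \ref{dpxcontain} and the higher-dimensional version of Theorem \ref{mainth}, and evaluate its extremal function directly, which forces your case split on whether $P$ contains a pair with both coordinates distinct: in the generic case you invoke the stacked-permutation bounds $\ex(n,M_2,3)=\Theta(n^{5/2})$ and $\ex(n,M_3,3)=\Theta(n^{8/3})$ from \cite{gkst}, while in the collinear case your slicing of a $3$-dimensional all-ones block with a singleton dimension is essentially the matrix form of Lemma \ref{dproj}. Both routes are sound; the paper's is shorter and needs only grids plus the projection lemma, whereas yours makes the reduction to known $3$-dimensional matrix results explicit. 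Two cosmetic points in your write-up: in the horizontal collinear case the matrix is $J_{s,1,s}$ rather than $J_{1,s,s}$ (harmless, since permuting dimensions preserves the extremal function), and the pigeonhole claim that a set with no pair of points differing in both coordinates lies on a single horizontal or vertical line deserves its two-line justification.
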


\begin{proof}
For the first part, suppose that $P$ has multiple points. Without loss of generality, suppose that these points have $x$-coordinates that differ by $a$. Define $H_2$ to be the subset of $\mathbb{R}^2$ such that $(x, y) \in H_2$ if and only if there exists $z$ such that $(x, z, y) \in Q_{P,2,c}$. Then $H_2$ contains $\left\{(0, 0), (a, 0), (0, c), (a, c) \right\}$, so $\px(n, Q_{P,2,c},3) = \Omega(n^{\frac{5}{2}})$ by Lemma \ref{dproj}, Lemma \ref{pxcontain}, and Lemma \ref{kst2p}. The upper bound $\px(n, Q_{P,2,c},3) = O(n^{\frac{5}{2}})$ is from Theorem \ref{genkst} and the assumption that $\px(n, P) = O(n)$.

For the second part, suppose that $P$ has at least three points with different $x$-coordinates. Without loss of generality, suppose that each pair of these points have $x$-coordinates that differ by at least $a$.  Define $H_3$ to be the subset of $\mathbb{R}^2$ such that $(x, y) \in H_3$ if and only if there exists $z$ such that $(x, z, y) \in Q_{P,3, c}$. Then $H_3$ contains $\left\{(0, 0), (a,0), (2a, 0), (0, c), (a,c), (2a, c), (0, 2c), (a,2c), (2a, 2c) \right\}$, so $\px(n, Q_{P,3, c},3) = \Omega(n^{\frac{8}{3}})$ by Lemma \ref{dproj}, Lemma \ref{pxcontain}, and Lemma \ref{kst2p}. As with $Q_{P, 2, c}$, the upper bound $\px(n, Q_{P,3,c},3) = O(n^{\frac{8}{3}})$ is from Theorem \ref{genkst} and the assumption that $\px(n, P) = O(n)$.
\end{proof}

Now we apply Corollary \ref{master3d} to some specific forbidden subsets. The first subset is obtained from the one in Theorem \ref{autoarc}.

\begin{cor}
Suppose that $f: [0, a] \rightarrow \mathbb{R}$  is increasing, with $\frac{f(t)-f(s)}{t-s} \le b$ for all $s, t \in [0, a]$ with $s < t$. If $P$ is the set of points $\left\{(t,f(t)): t \in [0, a]\right\}$ and $c > 0$, then $\px(n, Q_{P,2,c},3) = \Theta(n^{\frac{5}{2}})$ and $\px(n, Q_{P,3,c},3) = \Theta(n^{\frac{8}{3}})$.
\end{cor}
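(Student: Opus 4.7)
The plan is to derive this corollary directly from two already-established results: Theorem \ref{autoarc} and Corollary \ref{master3d}. The first supplies the hypothesis $\px(n, P) = O(n)$, and the second then converts that linear bound into the desired $\Theta(n^{5/2})$ and $\Theta(n^{8/3})$ bounds on the $3$-dimensional extremal function for the stacked versions $Q_{P, 2, c}$ and $Q_{P, 3, c}$.

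Concretely, I would first invoke Theorem \ref{autoarc} with the given $f$, which yields $\px(n, P) = \Theta(n)$; in particular $\px(n, P) = O(n)$, so the linear hypothesis of Corollary \ref{master3d} is satisfied. Next, I would verify the two structural hypotheses of Corollary \ref{master3d}: (i) $P$ has multiple points, which is immediate since $P = \{(t, f(t)) : t \in [0, a]\}$ contains a point for every $t \in [0, a]$; and (ii) $P$ has at least three points with pairwise distinct $x$-coordinates, which is also immediate since distinct $t \in [0, a]$ give distinct first coordinates (indeed $[0,a]$ is uncountable, so there are infinitely many such $t$). Thus both parts of Corollary \ref{master3d} apply.

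Applying the first part of Corollary \ref{master3d} to $P$ gives $\px(n, Q_{P, 2, c}, 3) = \Theta(n^{5/2})$, and applying the second part gives $\px(n, Q_{P, 3, c}, 3) = \Theta(n^{8/3})$, which is exactly the statement of the corollary. There is essentially no obstacle here, since all the work has already been done: Theorem \ref{autoarc} handles the nontrivial step of showing that the curve $\{(t, f(t))\}$ behaves like a diagonal segment (via containment in the line from $(0,0)$ to $(a, ba)$ and Theorem \ref{diagseg}), and Corollary \ref{master3d} packages together the projection argument (Lemma \ref{dproj}), the lower bound on $\px(n, J_{s,t})$-type configurations via Lemma \ref{kst2p}, and the upper bound coming from the $d$-dimensional strengthening of the Kővári--Sós--Turán theorem in Theorem \ref{genkst}. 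The only mild subtlety worth flagging is simply to state which hypothesis of Corollary \ref{master3d} is being checked when; once that bookkeeping is done, the proof reduces to a single sentence citing the two results.
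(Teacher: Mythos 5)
Your proposal is correct and matches the paper's proof exactly: the paper also derives this by noting that $\px(n, P) = \Theta(n)$ from Theorem \ref{autoarc} and that $P$ contains at least three points with distinct $x$-coordinates, then applying both parts of Corollary \ref{master3d}. The extra detail you give on checking the hypotheses is fine but not needed beyond what the paper states.
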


\begin{proof}
This follows from Corollary \ref{master3d}, since $P$ contains at least $3$ points with different $x$-coordinates and $\px(n, P, 2) = \Theta(n)$ by Theorem \ref{autoarc}. 
\end{proof}

Next we apply Corollary \ref{master3d} to obtain sharp bounds on $Q_{P, 2, c}$ and $Q_{P, 3, c}$ when $P$ looks like a plus sign ($+$).

\begin{cor}
If $P$ is the set of points $\left\{(x, y): |x|, |y| \le d \text{ and } (x = 0 \text{ or } y = 0)\right\}$ and $c > 0$, then $\px(n, Q_{P,2,c},3) = \Theta(n^{\frac{5}{2}})$ and $\px(n, Q_{P,3,c},3) = \Theta(n^{\frac{8}{3}})$.
\end{cor}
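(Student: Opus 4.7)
The plan is to reduce to Corollary \ref{master3d}. Since $P$ contains the horizontal segment $\{(x, 0) : -d \le x \le d\}$, it has many points with distinct $x$-coordinates, satisfying both structural hypotheses of Corollary \ref{master3d}. The only nontrivial step is proving $\px(n, P) = O(n)$.

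To prove this, I would let $S \subseteq [0, n]^2$ be an open $P$-free subset, write $m(y) = \mu_1(S_y)$, and partition $[0, n]$ into $L = \{y : m(y) \le 2d\}$ and $H = \{y : m(y) > 2d\}$. Trivially $\int_L m(y)\, d\mu_1(y) \le 2dn$. For each $y \in H$, define the ``middle'' of the row
\[
T_y = \{x_0 \in S_y : \mu_1(S_y \cap (-\infty, x_0)) > d \text{ and } \mu_1(S_y \cap (x_0, \infty)) > d\},
\]
so that $\mu_1(T_y) = m(y) - 2d$. A refinement of the interval-packing construction in the proof of Lemma \ref{usefullem}, applied once to the left of $x_0$ and once to the right, shows that for every $x_0 \in T_y$ there is an increasing $f_X \colon [-d, d] \to S_y$ with slope at least $1$ and $f_X(0) = x_0$; thus the horizontal arm of $P$ fits into $S$ centered at $(x_0, y)$. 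Since $S$ is $P$-free, no vertical arm can fit through the same point, so for each $(x_0, y) \in T := \{(x_0, y) : y \in H, x_0 \in T_y\}$, either $\mu_1(S^{x_0} \cap (-\infty, y)) \le d$ or $\mu_1(S^{x_0} \cap (y, \infty)) \le d$, where $S^{x_0}$ denotes the column of $S$ at $x_0$.

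Split $T = T_1 \cup T_2$ by which inequality holds. For fixed $x_0$, the cumulative measure $y \mapsto \mu_1(S^{x_0} \cap (-\infty, y))$ grows at unit rate inside $S^{x_0}$, so $\{y \in S^{x_0} : \mu_1(S^{x_0} \cap (-\infty, y)) \le d\}$ has $\mu_1$-measure at most $d$. By Fubini, $\mu_2(T_1) \le dn$, and symmetrically $\mu_2(T_2) \le dn$, so $\mu_2(T) \le 2dn$. On the other hand, computing $\mu_2(T)$ row by row yields $\int_H (m(y) - 2d)\, d\mu_1(y) \le 2dn$, whence $\int_H m(y)\, d\mu_1(y) \le 2dn + 2d \mu_1(H) \le 4dn$. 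Combining, $\mu_2(S) \le 6dn$, so $\px(n, P) = O(n)$; Corollary \ref{master3d} then delivers $\px(n, Q_{P,2,c}, 3) = \Theta(n^{5/2})$ and $\px(n, Q_{P,3,c}, 3) = \Theta(n^{8/3})$. The main obstacle is the anchored packing construction of $f_X$ that starts at a prescribed center $x_0 \in S_y$ rather than producing a free-floating segment; this is a modest adaptation of Lemma \ref{usefullem}, and once it is available the rest of the argument is a clean double-counting.
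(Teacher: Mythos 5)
Your reduction to Corollary \ref{master3d} is exactly the paper's top-level route, and your argument is correct; the difference is entirely in how you certify the hypothesis $\px(n,P)=O(n)$ for the plus sign. The paper gets this in one line from machinery it has already built: start from the vertical segment ($\px = 2dn$ by Theorem \ref{horizseg} and the rotation/reflection lemma), attach the right arm via Lemma \ref{addedseg} at a cost of $dn$, reflect, and attach the other arm, giving $\px(n,P)\le 4dn$. You instead prove linearity from scratch by double counting: rows with $\mu_1(S_y)\le 2d$ contribute $O(n)$ trivially, and in the remaining rows every point of the ``middle'' set $T_y$ admits an anchored horizontal arm, so $P$-freeness forces a one-sided column-measure bound whose Fubini count caps $\mu_2(T)$ by $2dn$. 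This is sound, including the step you flag: the anchored version of Lemma \ref{usefullem} does go through, but note you cannot simply take $f_X(u)=\inf\{x: \mu_1(S_y\cap(x_0,x))\ge u\}$, since that point need not lie in the open set $S_y$; you need the finite-union-of-closed-subintervals device from Lemma \ref{usefullem}, parametrized by cumulative measure within $\bigcup_j C_j$, which is the same $\mu_1(Q_{x,y})$ trick the paper uses inside Theorems \ref{t=2} and \ref{kstcont}. (Also, $\mu_1(T_y)\ge m(y)-2d$ is all you need and is what actually follows immediately; the claimed equality requires a small extra argument.) What the two approaches buy: the paper's is shorter given its toolkit and yields the better constant $4dn$; yours is self-contained, gives an explicit constant ($6dn$), and the anchored-arm/two-sided-measure idea generalizes more readily to shapes (e.g.\ H or $\sqcup$) where the arms do not all hang off a rightmost column, so Lemma \ref{addedseg} would not apply directly.
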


\begin{proof}
This follows from Corollary \ref{master3d}, since $P$ contains at least $3$ points with different $x$-coordinates and $\px(n, P, 2) = \Theta(n)$ by Lemma \ref{addedseg}. 
\end{proof}

\section{Discussion}\label{openpr}

We proved that the extremal function $\px(n, P)$ fully encompasses the Zarankiewicz problem up to a constant factor, and more generally $\px(n, P)$ fully encompasses the matrix extremal function $\ex(n, M)$ up to a constant factor. Any past results on the Zarankiewicz problem or more generally on $\ex(n, M)$ imply corresponding results for $\px(n, P)$. While the inputs for $\ex(n, M)$ are discrete (a positive integer $n$ and a 0-1 matrix $M$), the inputs for $\px(n, P)$ form a continuum (a real number $n$ and any subset $P \subseteq \mathbb{R}^2$). 

We showed that $\px(n, P) = \Theta(n^2)$ for any open set $P$, where the constants in the lower bound depend on $P$. We proved the stronger result that $\px(n, Q_P) = \Theta(n^2)$, where $Q_P$ is the set of points with rational coordinates that are contained in $P$, but these bounds are only sharp up to a constant factor. A more general problem is to find sharper bounds on $\px(n, P, d)$ for open subsets $P \subseteq \mathbb{R}^d$. Although our bounds for open subsets $P$ are sharp up to a constant factor that depends on $P$, it is a natural problem to determine the exact value of $\px(n, P)$, particularly for certain open sets like open squares or open balls.

On the other hand, we found other bounded infinite subsets with extremal functions that behaved much like 0-1 matrix extremal functions. We showed that sets of points $E$ that look like equal signs ($=$) have $\px(n, E) = \Theta(n^{\frac{3}{2}})$, regardless of how long the horizontal segments are. On the other hand, sets of points $A$ that look like plus signs ($+$) have $\px(n, A) = \Theta(n)$ by Theorem \ref{horizseg} and Lemma \ref{addedseg}. There are other natural sets of points that we have not considered, such as sets of points in the shape of $\sqcup$ or H. One could also consider infinite sets of points in the shape of the boundary of a rectangle ($\square$) and other more complex grid-like sets of points.

All of the forbidden infinite subsets of $\mathbb{R}^2$ in the previous paragraph are unions of horizontal or vertical line segments, but some extend naturally to sets formed by diagonal segments. For example, if $P$ is the subset consisting of the segment with endpoints at $(0, 0)$ and $(1, 0)$ and the segment with endpoints at $(0, 1)$ and $(1, 2)$, what is $\px(n, P)$? In this case, note that $P$ contains the subset which consists of the four points $Q = \left\{(0,0), (1,0),(0,1),(1,2) \right\}$, and $\px(n, Q) = \Theta(n \log{n})$ by Theorem \ref{mainth} and \cite{tardos05}, so $\px(n, P) = \Omega(n \log{n})$.

\begin{conj}
If $P$ is the subset of $\mathbb{R}^2$ consisting of the segment with endpoints at $(0, 0)$ and $(1, 0)$ and the segment with endpoints at $(0, 1)$ and $(1, 2)$, then $\px(n, P) = \Theta(n \log{n})$.
\end{conj}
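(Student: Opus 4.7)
The lower bound $\px(n, P) = \Omega(n \log n)$ is already noted in the paragraph preceding the conjecture: $P$ contains the four-point set $Q = \{(0,0),(1,0),(0,1),(1,2)\}$ via the identity maps (which trivially satisfy the slope-$\ge 1$ condition), so by Lemma \ref{pxcontain}, Theorem \ref{mainth}, and the bound $\ex(n, M_Q) = \Theta(n \log n)$ from \cite{tardos05}, we have $\px(n, P) \ge \px(n, Q) = \Theta(n \log n)$. The content therefore lies in the matching upper bound $\px(n, P) = O(n \log n)$.

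My plan for the upper bound is to adapt the grid-discretization argument from the upper bound proof of Theorem \ref{mainth}. Given an open $P$-free subset $S \subseteq [0,n]^2$ and a large integer $k$ with $\delta := 1/k$ dividing $n$, I would partition $[0,n]^2$ into an $m \times m$ grid of open $\delta \times \delta$ squares with $m = n/\delta$, and form a $0$--$1$ matrix $B$ with $B(i,j) = 1$ exactly when the corresponding square lies inside $S$, so that $\delta^2 |B|$ approaches $\mu_2(S)$ up to vanishing error as in Theorem \ref{mainth}. The key reduction is to show that $B$ avoids the fixed $(2k+1) \times (k+1)$ forbidden pattern $\tilde M_k$ consisting of a horizontal run of $k+1$ ones across the bottom row together with an antidiagonal run of $k+1$ ones from the middle-left entry $(k+1,1)$ up to the top-right entry $(1,k+1)$. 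Indeed, if $B$ contained $\tilde M_k$ with row-indices $i_1 < \cdots < i_{2k+1}$ and column-indices $j_1 < \cdots < j_{k+1}$, then the $2(k+1)$ open $\delta$-squares it picks out would all sit inside $S$, and since $(k+1)\delta = 1 + \delta > 1$ there is enough slack to construct piecewise-linear slope-$\ge 1$ maps $f_X$ on $[0,1]$ (traversing the $k+1$ columns with small nonnegative jumps at their boundaries, permitted because such jumps only add to the slope) and $f_Y$ on $\{0\} \cup [1,2]$ (stepping through the horizontal-row band and then through the $k+1$ antidiagonal-row bands) whose images land entirely inside the corresponding union of open squares; that would embed $P$ inside $S$, contradicting $P$-freeness.

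The main obstacle will be the final step: proving $\ex(m, \tilde M_k) = O_k(m \log m)$, which combined with the above gives $\mu_2(S) \le \delta^2 \ex(m, \tilde M_k) + o(1) = O(n \log n)$ once $\delta$ (equivalently $k$) is fixed. The easy direction $\ex(m, \tilde M_k) \ge \ex(m, M_Q) = \Omega(m \log m)$ is immediate since $M_Q$ embeds into $\tilde M_k$ at its four corners, but the matching upper bound does not follow directly from Lemma \ref{tardos_blank} because $\tilde M_k$ is not a blow-up of $M_Q$ in the sense of that lemma---its ones are packed into contiguous runs rather than separated by rows and columns of zeros. My plan here is to revisit the divide-and-conquer argument in \cite{tardos05} that yields $\ex(m, M_Q) = O(m \log m)$ and to track how it degrades when each of the four ones of $M_Q$ must be realized by a contiguous run of length $k+1$ in the appropriate direction; any such degradation should only multiply the bound by a factor depending on $k$, which is absorbed once $k$ is fixed. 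Together with the lower bound this then yields $\px(n, P) = \Theta(n \log n)$.
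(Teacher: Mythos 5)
This statement is an open conjecture in the paper: the surrounding text establishes only the lower bound $\px(n, P) = \Omega(n \log n)$, via the four-point subset $Q = \{(0,0),(1,0),(0,1),(1,2)\}$ of $P$ together with Lemma \ref{pxcontain}, Theorem \ref{mainth}, and \cite{tardos05}, and offers no proof of a matching upper bound. Your proposal does not close that gap. You correctly recapitulate the known lower bound, but your upper-bound argument is a plan whose central step is missing: you yourself identify $\ex(m, \tilde M_k) = O_k(m \log m)$ as ``the main obstacle'' and only promise to revisit the divide-and-conquer argument of \cite{tardos05} in the hope that forcing each one of $M_Q$ to become a contiguous run of length $k+1$ degrades the bound by a factor depending only on $k$. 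That hope is precisely where the difficulty of the conjecture lives. The matrix $\tilde M_k$ is not a blow-up in the sense of Lemma \ref{tardos_blank} (its ones are packed contiguously rather than separated by rows and columns of zeroes), it has $2k+2$ ones, and for patterns of that size even the order of magnitude of the extremal function is generally unknown; nothing in the paper or the cited literature yields $O(m \log m)$ for it. (The reduction itself also has a fixable off-by-one: with $k$ pattern-rows of separation you only guarantee $f_Y(1) - f_Y(0) > 1 - \delta$, not $\ge 1$.)

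There is a second, independent gap in the discretization. With $\delta = 1/k$ fixed, setting $B(i,j) = 1$ exactly when the corresponding $\delta \times \delta$ square lies inside $S$ gives $\delta^2 |B| \le \mu_2(S)$, but not the reverse inequality up to $o(1)$: for a fixed grid resolution, the portion of $S$ not covered by complete grid squares need not be negligible, since an open $P$-free set can concentrate most of its measure in pieces thinner than $\delta$ or straddling grid lines. In the upper-bound proof of Theorem \ref{mainth} this is handled by sending the grid resolution $r \to \infty$ for each fixed $S_t$, which is affordable only because the forbidden pattern there is the spaced-out blow-up $S(M_P, \lceil \frac{r}{n} \rceil)$, whose extremal function is controlled uniformly in the spacing by Lemma \ref{tardos_blank}. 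In your setup the forbidden pattern $\tilde M_k$ must scale with the grid ($k = 1/\delta$), so letting $\delta \to 0$ would require $\ex(m, \tilde M_k) \le C_k\, m \log m$ with $C_k = O(k^2)$ --- an even stronger unproven claim than the one you defer. As it stands, the proposal is a plausible reduction sketch, but it establishes neither the approximation step nor the matrix extremal bound, so the conjecture remains open.
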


We also have a conjecture for any disjoint union of horizontal segments $P$ with no two points having the same $x$-coordinate, for which we conjecture a quasilinear upper bound on $\px(n, P)$. This is in analogue with the quasilinear upper bound on $\ex(n, M)$ when $M$ is a 0-1 matrix with no pair of ones in the same column \cite{Ke}.

\begin{conj}
If $P$ is a disjoint union of horizontal segments with no two points having the same $x$-coordinate, then there exists a constant $t$ such that $\px(n, P) = O(n 2^{\alpha(n)^t})$.
\end{conj}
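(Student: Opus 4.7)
The plan is to run the same general argument used in the theorem of Section \ref{segperm} for disjoint horizontal segments with one segment per row, but to replace the linear bound coming from double permutation matrices with Keszegh's quasilinear bound for matrices whose columns carry at most one $1$. First, using Lemma \ref{pxcontain}, I would normalize $P$ so that consecutive rows of $P$ are a vertical distance $c$ apart, each segment has closed endpoints and common length $\ell$, and horizontally consecutive segments within the same row have nearest endpoints $c$ apart; segments in different rows are automatically horizontally disjoint by hypothesis.

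Let $P'$ be the finite subset of $\mathbb{R}^2$ consisting of the left and right endpoints of all segments of $P$. Since no two points of $P$ share an $x$-coordinate, every column of $M_{P'}$ contains at most one $1$. Applying Keszegh's theorem \cite{Ke} to $M_{P'}^T$ (whose rows now have at most one $1$), together with Lemma \ref{tardos_blank}, yields $\ex(\lceil n/c \rceil, S(M_{P'}, 1)) = O(n\, 2^{\alpha(n)^t})$ for some constant $t$ depending on $P$. Given an open $P$-free subset $S \subseteq [0, n]^2$, I would form the $\lceil n/c \rceil \times \lceil n/c \rceil$ 0-1 matrix $B$ by exactly the rule of the previous theorem (setting $b_{ij}=1$ iff the appropriate restriction of $S$ possesses a horizontal line of measure greater than $\ell$), and then the chunk-based covering argument would conclude $\mu_2(S) \le c\ell \cdot |B| = O(n\, 2^{\alpha(n)^t})$, provided that $B$ avoids $S(M_{P'}, 1)$.

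The hard part will be verifying this last avoidance claim when some row of $P$ contains $k \ge 2$ segments. In the previous theorem each row of $P$ held a single segment, so a single horizontal line of length-$>\ell$ measure inside the strip of $S$ corresponding to that row of $B$ was enough to realize the forbidden copy; in our setting, a row of $P$ with $k$ segments requires $k$ parallel length-$\ell$ segments in the corresponding strip of $S$ sharing a single $y$-coordinate, while the current construction only guarantees one $y$-line per consecutive pair of ones of $B$, with nothing forcing the $k$ resulting $y$-values to coincide.

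To address this, I would refine the vertical discretization: subdivide each $c \times c$ square into $N$ horizontal slabs of height $c/N$ for a sufficiently large integer $N$ depending on $P$, and replace $B$ by a matrix with $N \lceil n/c \rceil$ rows in which the row index records the slab hosting the length-$>\ell$ line. Consecutive ones in a single refined row of $B$ then extract segments whose $y$-coordinates agree up to $c/N$, and a Lebesgue-density argument in the spirit of the manipulations in Lemma \ref{usefullem} and Theorem \ref{kstcont} should then align the $k$ approximately-coincident segments at an exact common $y$-coordinate inside the slab. Since the refined $B$ inherits the one-$1$-per-column property of $M_{P'}$, Keszegh's theorem still applies (with a possibly larger exponent $t$), and absorbing the $N$-factor into the constants gives $\px(n, P) = O(n\, 2^{\alpha(n)^t})$ for a $t$ depending on $P$.
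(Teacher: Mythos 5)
First, note that this statement is left as an open conjecture in the paper; there is no proof of it to compare against, so a correct argument here would be a new result rather than a reconstruction. Your proposal does not close the conjecture: the step you yourself flag as ``the hard part'' is exactly where the argument breaks, and the fix you sketch does not work. The definition of containment requires a single function $f_Y$ on the $y$-coordinates of $P$, so if a row of $P$ holds $k\ge 2$ segments, a copy of $P$ in $S$ must place all $k$ image segments on \emph{one} horizontal line. Your refined matrix only certifies that each of the $k$ chunks contains a line of measure $>\ell$ at \emph{some} $y$ inside a common slab of height $c/N$, and no density or alignment argument can upgrade ``within $c/N$ of each other'' to ``equal.'' Concretely, take two chunks in the same slab with $Q_1=(0,\ell+\epsilon)\times(0,\tfrac{c}{2N})$ and $Q_2=(L,L+\ell+\epsilon)\times(\tfrac{c}{2N},\tfrac{c}{N})$: both are open, both host lines of measure $>\ell$ inside the slab, so both would register a $1$ in the same refined row of $B$, yet there is no $y$ at which both slices are even nonempty. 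Hence the implication ``$B$ contains $S(M_{P'},1)\Rightarrow S$ contains $P$'' fails, the bound $|B|\le\ex(\cdot,S(M_{P'},1))$ cannot be invoked, and the chunk-covering estimate collapses. This failure is scale-invariant, so increasing $N$ never helps.

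The rest of your outline is sound but is not where the difficulty lies: Keszegh's bound does apply to $M_{P'}^T$ (at most one $1$ per row after transposing, and $\ex(n,M)=\ex(n,M^T)$), Lemma \ref{tardos_blank} handles $S(M_{P'},1)$, and the chunk bookkeeping from Section \ref{segperm} is routine. The reason the theorem of Section \ref{segperm} goes through is precisely that with one segment per row of $P$, each row of $P$ needs only one extracted segment of $S$, so the $y$-coordinates of segments extracted for \emph{different} rows of $P$ need only be suitably ordered and separated, which the $c\times c$ grid plus $S(M_{P'},1)$ guarantees; no within-row alignment is ever required. Any proof of the conjecture must supply a genuinely new mechanism for forcing several long horizontal fibers of an open set, taken from horizontally separated regions, onto a common $y$-line (or must avoid needing this entirely, e.g.\ by a direct argument on the measurable function $y\mapsto S_y$ rather than a cell decomposition). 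As written, the proposal does not contain such a mechanism.
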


Similar questions could also be asked for sets of points that are formed by arcs instead of only segments. We also wonder what is the supremum of $\px(n, P)$ over all subsets $P \subseteq [0, 1]^2$ for which all points in $P$ have different $x$-coordinates and all points in $P$ have different $y$-coordinates.

In Section \ref{basic_prop}, we found operations that can be performed on sets of points $P \subseteq \mathbb{R}^2$ to obtain a new set of points $P'$ such that $\px(n, P') = \Theta(\px(n, P))$. For example, if $P$ has a rightmost column containing some point $p$, then you can add a horizontal segment of length $c$ to $P$ with its left endpoint on $p$ to obtain a new subset of $\mathbb{R}^2$, and this only increases $\px(n, P)$ by at most $c n$. Also we showed that dilating \emph{finite} subsets $P$ only changes $\px(n, P)$ by at most a constant multiplicative factor. For a better understanding of $\px(n, P)$, it would be useful to find more operations. For example, Keszegh proved in \cite{Ke} that if $J$ is a 0-1 matrix that has a one in the bottom right corner and $K$ is a 0-1 matrix that has a one in the top left corner, and $R$ is obtained from translating $J$ and $K$ so that the bottom right corner of $J$ overlaps the top left corner of $K$ and filling the blank space with zeroes, then $\ex(n, R) \le \ex(n, J)+\ex(n, K)$. Is an analogous operation possible for subsets of $\mathbb{R}^2$?

\begin{conj}
Suppose that $P \subseteq \mathbb{R}^2$ contains the point $(c, d)$ and all other points in $P$ have $x$-coordinate at most $c$ and $y$-coordinate at most $d$. Suppose that $Q \subseteq \mathbb{R}^2$ contains the point $(c, d)$ and all other points in $Q$ have $x$-coordinate at least $c$ and $y$-coordinate at least $d$. Then $\px(n, P \cup Q) \le \px(n, P)+\px(n, Q)$.
\end{conj}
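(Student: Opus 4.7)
The plan is to adapt Keszegh's matrix argument \cite{Ke} to the continuous setting. Given an open $(P \cup Q)$-free subset $S \subseteq [0, n]^2$, partition $S$ by asking which points can serve as the image of the shared corner $(c, d)$ in a copy of $P$: set
\[
  S_R = \bigl\{\, s \in S : \text{some embedding } (f_X, f_Y) \text{ of } P \text{ into } S \text{ has } (f_X(c), f_Y(d)) = s \,\bigr\},
\]
and $S_B = S \setminus S_R$. The goal is to show $S_R$ is $Q$-free, $S_B$ is $P$-free, and that each bounds the corresponding $\px$-value.

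The freeness claims follow from the Keszegh-style dichotomy. A copy of $P$ inside $S_B \subseteq S$ would have its $(c, d)$-image at a point $s \in S_B$, but then $s$ qualifies as a $P$-corner in $S$ and so $s \in S_R$, contradicting $S_B \cap S_R = \emptyset$. If $S_R$ contained a copy of $Q$ via $(g_X, g_Y)$ with corner $s_0$, then $s_0 \in S_R$ supplies a $P$-embedding $(f_X, f_Y)$ into $S$ with the same corner. Because $P$ lies weakly below and to the left of $(c, d)$ while $Q$ lies weakly above and to the right, the coordinate domains overlap only at $X_P \cap X_Q = \{c\}$ and $Y_P \cap Y_Q = \{d\}$, where $f_X$ agrees with $g_X$ and $f_Y$ agrees with $g_Y$; they splice into single functions $\phi_X, \phi_Y$ on $X_P \cup X_Q$ and $Y_P \cup Y_Q$. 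A one-line check---for $x_0 \in X_Q$ and $x_1 \in X_P$ with $x_0 > x_1$, use $\phi_X(x_0) \ge s_{0,x} + (x_0 - c)$ and $\phi_X(x_1) \le s_{0,x} - (c - x_1)$---confirms the slope-at-least-one condition across the splice, and similarly for $\phi_Y$. Thus $(\phi_X, \phi_Y)$ embeds $P \cup Q$ into $S$, contradicting $(P \cup Q)$-freeness.

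Next I would show $S_R$ is open when $P$ is finite. For $s \in S_R$ with associated embedding, each of the finitely many image points lies at positive distance from $S^c$; let $r$ be the minimum of these distances. For any $s'$ within distance $r$ of $s$, translating the embedding by $s' - s$ adds a constant to each coordinate, preserving the slope-at-least-one condition, and the translated image remains inside $S$, giving $s' \in S_R$. Hence $B(s, r) \subseteq S_R$ and $S_R$ is open and $Q$-free, so $\mu_2(S_R) \le \px(n, Q)$.

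The main obstacle is bounding $\mu_2(S_B)$, since $S_B$ is only relatively closed in $S$ whereas $\px(n, P)$ is defined via open sets. The open companion $S \setminus \overline{S_R}$ is a subset of $S_B$ and hence $P$-free, so $\mu_2(S \setminus \overline{S_R}) \le \px(n, P)$; combined with the previous bound this gives $\mu_2(S) \le \px(n, P) + \px(n, Q) + \mu_2(\partial S_R \cap S)$. Closing the gap reduces to showing that the topological boundary of $S_R$ inside $S$ has two-dimensional Lebesgue measure zero. This is the delicate step, since generic open sets can have boundaries of positive measure; the argument must exploit the specific structure of $S_R$ as a set of corner points. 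I would approach it by approximating $S_R$ from inside by a countable union of open axis-parallel rectangles contained in $S_R$ (whose boundaries have measure zero), which is still $Q$-free, and absorbing the resulting error using the continuity of $\px$ from Section~\ref{superadd}. Extending the argument to infinite $P$ and $Q$ requires an additional uniform positive distance from the embedding image to $S^c$, which holds automatically when $P$ and $Q$ are compact.
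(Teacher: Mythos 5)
This statement is posed as an open conjecture in the paper, so there is no proof of record to compare against; I can only assess your argument on its own terms. The skeleton is a sensible adaptation of Keszegh's splitting argument, and most of it is sound: the definition of $S_R$ as the set of admissible corner images, the verification that $S_B = S \setminus S_R$ is $P$-free, the splicing argument showing $S_R$ is $Q$-free (the slope check across $X_P \cap X_Q = \{c\}$ and $Y_P \cap Y_Q = \{d\}$ is correct), and the translation argument showing $S_R$ is open for finite $P$ all work, giving $\mu_2(S_R) \le \px(n, Q)$.

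However, the gap you flag at the end is genuine, and your proposed repair does not close it. The leftover set $S_B$ is only relatively closed in $S$, and the definition of $\px(n, P)$ controls only \emph{open} $P$-free sets, so you can charge only $\mathrm{int}(S_B) = S \setminus \overline{S_R}$ to $\px(n, P)$, leaving $\mu_2(S \cap \partial S_R)$ unaccounted for. Boundaries of open sets can have positive Lebesgue measure, and nothing in the definition of $S_R$ rules this out: in particular $S_R$ is not upward-closed in general (translating only the corner image up and to the right can break condition 3 for points of $P$ that share the row or column of $(c,d)$), so one cannot argue that $\partial S_R$ is a monotone staircase of measure zero. Your suggested fix operates on the wrong side of the decomposition: $S_R$ is already open and already satisfies $\mu_2(S_R) \le \px(n, Q)$, so inner-approximating it by rectangles gains nothing, while shrinking it only enlarges the leftover set, which then ceases to be $P$-free; and the continuity established in Section \ref{superadd} is continuity in $n$, which gives no purchase on $\mu_2(S \cap \partial S_R)$. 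A secondary issue is that compactness of $P$ does not automatically yield a uniform positive distance from the embedding image to the complement of $S$, since the maps $f_X, f_Y$ need not be continuous and the image of a compact $P$ need not be compact. As written, your argument establishes $\mu_2(S) \le \px(n, P) + \px(n, Q) + \mu_2(S \cap \partial S_R)$, which yields the conjecture only under an additional, unproven regularity hypothesis on $S_R$.
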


\subsection{When the size of $P$ grows with $n$}

All of the results in this paper focused on forbidden \emph{bounded} sets of points. Several papers have investigated $\ex(n, M)$ and corresponding extremal functions of forbidden sequences where the size of the forbidden 0-1 matrix $M$ and the size of the forbidden sequence are allowed to grow with respect to $n$ \cite{rs, wp, csd, gtform}. For example, we showed in \cite{csd} that if $M_s$ is any $2 \times s$ 0-1 matrix that has ones in both the first and last columns, then $\ex(n, M_s) = \Omega(n^{2-o(1)})$ if and only if $s(n) = \Omega(n^{1-o(1)})$, answering a question from \cite{wp}.

Analogously, it is natural to investigate $\px(n, P)$ where the size of the forbidden subset $P \subseteq \mathbb{R}^2$ is allowed to grow with respect to $n$. Let $Q_s$ be any set of points with minimum $x$-coordinate $0$, maximum $x$-coordinate $s$, and all $y$-coordinates equal to $0$ or $1$. Then $P_{s, 2}$ contains $Q_s$ with $P_{s,2}$ defined as in Theorem \ref{t=2}, so $\px(n, Q_s) \le \px(n, P_{s, 2})$. 

\begin{prop}
For all subsets $Q_s \subseteq \mathbb{R}^2$ with minimum $x$-coordinate $0$, maximum $x$-coordinate $s$, and all $y$-coordinates equal to $0$ or $1$, we have $\px(n, Q_s) = \Omega(n^{2-o(1)})$ if and only if $s(n) = \Omega(n^{1-o(1)})$.
\end{prop}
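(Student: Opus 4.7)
The plan is to sandwich $\px(n, Q_s)$ between two quantities by exploiting the ``contains'' relation in both directions. On one side, $Q_s$ sits inside the pair of horizontal segments $P_{s,2,1}$; on the other side, $Q_s$ contains (in the formal sense) a simple two-point subset whose extremal function is trivial to bound below by an explicit rectangle construction. Matching these bounds will pin down the threshold at $s = \Theta(n^{1-o(1)})$.

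For the easy (``only if'') direction, I would first verify that $P_{s,2,1}$ contains $Q_s$: the identity map on $x$-coordinates and the unit-slope shift $f_Y: 0 \mapsto 1,\; 1 \mapsto 2$ on $y$-coordinates send every point of $Q_s$ into $P_{s,2,1}$. Then Lemma \ref{pxcontain} combined with Theorem \ref{t=2} gives $\px(n, Q_s) \le \px(n, P_{s,2,1}) = O(s^{1/2} n^{3/2})$. Assuming $\px(n, Q_s) = \Omega(n^{2-o(1)})$, this forces $s(n)^{1/2} = \Omega(n^{1/2-o(1)})$, whence $s(n) = \Omega(n^{1-o(1)})$.

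For the constructive (``if'') direction, I would use that $Q_s$ contains the two-point set $P := \{(0, y_1), (s, y_2)\}$, where $y_1, y_2 \in \{0,1\}$ are the $y$-coordinates of the extremal points at $x = 0$ and $x = s$ (the identity maps witness this). I would then build the open rectangle $S := (0, \min(s, n)) \times (0, n) \subseteq [0, n]^2$ and argue that it is $P$-free: any embedding of $P$ in $S$ would demand $f_X(s) - f_X(0) \ge s$, but with $f_X(\{0,s\}) \subseteq (0, \min(s, n))$ this gap is strictly less than $s$. Hence $S$ avoids $P$, and by Lemma \ref{pxcontain} it also avoids $Q_s$, yielding $\px(n, Q_s) \ge \min(s, n) \cdot n$. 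Under the hypothesis $s(n) = \Omega(n^{1-o(1)})$, this lower bound is $\Omega(n^{2-o(1)})$.

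There is no real obstacle: the whole argument amounts to checking the two sandwich inclusions formally against the definition of containment, each of which is witnessed by an identity map or a unit-slope shift. The only mildly delicate point is that Theorem \ref{t=2} must be applied with $s$ allowed to grow with $n$; however, inspecting its proof shows that the implicit constant depends only on $c$ (here $c = 1$), so the bound $O(s^{1/2} n^{3/2})$ continues to hold for $s = s(n)$.
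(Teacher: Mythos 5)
Your proof is correct and follows essentially the same route as the paper: the open rectangle of width $\min(s,n)$ for the lower bound, and the containment $Q_s \subseteq$-wise inside $P_{s,2,c}$ combined with Lemma \ref{pxcontain} and Theorem \ref{t=2} for the upper bound (the paper phrases this direction contrapositively via a subsequence with $s(x_i) < x_i^{\alpha}$, but the content is identical). Your observation that the constant in Theorem \ref{t=2} depends only on $c$, so the bound holds for $s = s(n)$, is the right point to check and matches the paper's use.
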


\begin{proof}
If $s = \Omega(n^{1-o(1)})$, then the open set $(0, s) \times (0, n)$ avoids $Q_s$, which shows that $\px(n, Q_s) = \Omega(n^{2-o(1)})$. 

If $s \neq \Omega(n^{1-o(1)})$, then there is a constant $\alpha < 1$ and an infinite sequence $x_1 < x_2 < \dots$ of positive reals such that $\lim_{i \rightarrow \infty} x_i = \infty$ and $s(x_i) < x_i^{\alpha}$ for all $i > 0$. If $s(n) < n^{\alpha}$ for some $\alpha < 1$, then we have $\px(n, Q_s) \le \px(n, P_{s, 2}) = O(n^{\frac{\alpha}{2}+\frac{3}{2}})$ by Theorem \ref{t=2}. Note that $\frac{\alpha}{2}+\frac{3}{2} < 2$, since $\alpha < 1$. Thus if $s \neq \Omega(n^{1-o(1)})$, then $\px(n, Q_s) \neq \Omega(n^{2-o(1)})$. Thus we have shown that $\px(n, Q_s) = \Omega(n^{2-o(1)})$ if and only if $s(n) = \Omega(n^{1-o(1)})$.
\end{proof}

\subsection{Saturation functions for subsets of $\mathbb{R}^2$}

Another potential direction for future research is saturation functions. Saturation problems have been studied for decades, with much of the focus on graphs \cite{dkm, ehm, fk0, kt}, posets \cite{fkk, klm}, and set systems \cite{fkk0, gkl}. Recently a saturation function was introduced for forbidden 0-1 matrices \cite{cb}. We say 0-1 matrix $A$ is \emph{saturating} for 0-1 matrix $M$ if $A$ avoids $M$ but any matrix obtained from $A$ by changing a zero to a one must contain $M$. Define $\sat(n, M)$ as the minimum number of ones in an $n \times n$ 0-1 matrix that is saturating for $M$. Fulek and Keszegh proved that $\sat(n, M) = O(1)$ or $\sat(n, M) = \Theta(n)$ for all 0-1 matrices $M$ \cite{fk}. The results in \cite{fk} showed that $\sat(n, M) = \Theta(n)$ for almost all $k \times k$ 0-1 matrices $M$, but infinite families of 0-1 matrices $M$ with $\sat(n, M) = O(1)$ were found in \cite{ben} and \cite{gsat}. In the same way that $\px(n, P)$ encompasses $\ex(n, M)$, it seems natural to investigate if there is a saturation function for subsets of $\mathbb{R}^2$ that encompasses $\sat(n, M)$. 

Say that an open subset $S \subseteq [0,n]^2$ is \emph{saturating} for $P \subseteq \mathbb{R}^2$ in $[0, n]^2$ if $S$ avoids $P$ but any open subset $S' \subseteq [0, n]^2$ that is a proper superset of $S$ must contain $P$. So we can define $\osat(n, P)$ as the infimum of $\mu_2(S)$ over all open subsets $S \subseteq [0, n]^2$ that are saturating for $P$ in $[0, n]^2$. Note that $\osat(n, P)$ is only defined if there exists an open subet of $[0, n]^2$ that is saturating for $P$  in $[0, n]^2$. Clearly $\osat(n, P) \le \px(n, P)$ whenever $\osat(n, P)$ is defined, and this is sharp. We can see that this is sharp for the simplest example of nonempty $P \subseteq \mathbb{R}^2$.

\begin{lem}
If $P$ consists of a single point, then $\osat(n, P) = 0$.
\end{lem}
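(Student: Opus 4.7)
The plan is to exhibit an explicit open saturating set of measure zero, namely $\emptyset \subseteq [0,n]^2$. The empty set is open, and $\mu_2(\emptyset) = 0$, so it suffices to check that it is saturating for $P$ in $[0,n]^2$.

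First I would unpack the containment definition when $P$ consists of a single point $(a,b)$. Here $X_P = \{a\}$ and $Y_P = \{b\}$, so the monotonicity conditions on $f_X$ and $f_Y$ are vacuous (there are no pairs with $x_0 > x_1$ in $X_P$, and likewise for $Y_P$). Thus an open set $S \subseteq [0,n]^2$ contains $P$ if and only if there exists $(u,v) \in S$: indeed if $S$ is nonempty, pick $(u,v) \in S$ and set $f_X(a) = u$, $f_Y(b) = v$; conversely $(f_X(a), f_Y(b)) \in S$ forces $S$ to be nonempty. In particular, $\emptyset$ is $P$-free.

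Next I would verify the saturation condition. Every proper open superset $S' \subseteq [0,n]^2$ of $\emptyset$ is a nonempty open subset of $[0,n]^2$, hence by the previous observation contains $P$. So $\emptyset$ is saturating for $P$ in $[0,n]^2$, which gives $\osat(n, P) \le \mu_2(\emptyset) = 0$. Since Lebesgue measure is nonnegative, we conclude $\osat(n, P) = 0$.

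There is really no main obstacle here; the only subtle point is the sanity check that the definition of containment genuinely degenerates to nonemptiness when $|P| = 1$. Once this degeneration is observed, taking $S = \emptyset$ immediately witnesses the infimum.
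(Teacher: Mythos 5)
Your proof is correct and takes the same approach as the paper, which simply observes that the empty set is saturating for a single point in $[0,n]^2$; you have just filled in the routine verification that containment of a singleton degenerates to nonemptiness.
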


\begin{proof}
For all $n > 0$, the empty set is saturating for $P$ in $[0, n]^2$.
\end{proof}

Like the 0-1 matrix saturation function $\sat(n, M)$, we conjecture that the subset saturation function $\osat(n, P)$ is either $O(1)$ or $\Theta(n)$ for all subsets $P \subseteq \mathbb{R}^2$ on which $\osat(n, P)$ is defined.

\begin{conj}
For any subset $P \subseteq \mathbb{R}^2$ such that $\osat(n, P)$ is defined, either $\osat(n, P) = O(1)$ or $\osat(n, P) = \Theta(n)$, where the constants in the bounds depend on $P$.
\end{conj}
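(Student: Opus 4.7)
The plan is to mirror the Fulek--Keszegh proof \cite{fk} that $\sat(n,M)$ is either $O(1)$ or $\Theta(n)$ for 0-1 matrices $M$, adapted to the continuous setting. The argument splits into two essentially independent parts: an $O(n)$ upper bound on $\osat(n,P)$ whenever it is defined, and a dichotomy showing that $\osat(n,P)$ is either bounded or grows at least linearly in $n$.

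For the upper bound, I would fix $n_0$ for which $\osat(n_0,P)$ is finite and choose a near-optimal saturating open subset $S_0 \subseteq [0,n_0]^2$. The strategy is to tile $[0,n]^2$ along its diagonal with $\lceil n/n_0 \rceil$ disjoint translated copies of $S_0$, each occupying an $n_0 \times n_0$ sub-box, and then augment with a thin open ``spine'' of area $O(n_0)$ per sub-box. The spine is needed because an open neighborhood added in the gap between tiles need not, together with just one copy of $S_0$, produce a copy of $P$; placing a carefully chosen thin L-shaped region around each copy of $S_0$ so that it behaves like $S_0$'s own saturating boundary forces any added neighborhood to yield $P$. The total area is then $\lceil n/n_0 \rceil \cdot (\osat(n_0,P) + O(n_0)) = O(n)$, where the $O$-constant depends on $P$.

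For the dichotomy, assume $\osat(n,P)$ is not $O(1)$, pick $n_i \to \infty$ with $\osat(n_i,P) \to \infty$, and choose a near-optimal saturating $S_i \subseteq [0,n_i]^2$ for each $i$. Slice $[0,n_i]^2$ into horizontal strips of some fixed height $L = L(P)$ exceeding the vertical extent of a compact portion of $P$ sufficient to trigger containment. The goal is to prove a trichotomy for each strip $T_j := S_i \cap ([0,n_i] \times [jL,(j+1)L])$: either (a) $\mu_2(T_j) \le C(P,L)$ for an absolute constant, or (b) $\mu_2(T_j) \ge c(P) \cdot L$ for an absolute constant $c(P)>0$. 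If a positive fraction of the $\Theta(n_i/L)$ strips lies in case (b), then immediately $\mu_2(S_i) = \Omega(n_i)$. Otherwise all but boundedly many strips lie in case (a), and a compactness argument on the space of open subsets of $[0,n_i] \times [0,L]$ should show that the ``strip type'' is determined up to small-measure perturbation by $O(1)$ data, giving $\mu_2(S_i) = O(1)$ uniformly in $i$ and contradicting the choice of $S_i$.

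The main obstacle is establishing the strip-trichotomy rigorously. The restriction of a saturating set to a strip is not intrinsically saturating, since saturation depends on interactions with neighboring strips, so the correct notion of ``strip type'' must capture how each strip interacts with its neighbors rather than its own internal saturation properties. Moreover, $P$ may itself be infinite, for example the $\equiv$-shaped sets $P_{s,t,c}$ of Section \ref{skst} or the plus-shaped subsets of Section \ref{higher_d}, so any reduction through $M_P$ to the matrix saturation dichotomy \cite{fk} handles only the finite case; for infinite $P$ the analysis must be intrinsic and geometric, and one must verify that the strip trichotomy survives when a single copy of $P$ spans multiple strips. A further subtlety is that for finite $P$ the $\Theta$-correspondence between $\px(n,P)$ and $\ex(n,M_P)$ from Theorem \ref{mainth} does not obviously lift to a correspondence between $\osat$ and $\sat$, because minimality of a saturating structure is preserved much less robustly than extremality under the dilation/blank-row manipulations of Lemmas \ref{tardos_blank} and \ref{ps_blank}.
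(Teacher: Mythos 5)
The statement you are trying to prove is presented in the paper as a conjecture: the paper offers no proof, so there is nothing to compare your approach against, and your proposal must stand on its own. As written it is a research plan rather than a proof, and both halves have concrete gaps. For the $O(n)$ upper bound, the diagonal tiling with copies of $S_0$ plus a ``thin spine'' is not actually a construction: the spine is never specified, you never verify that the union of the tiles and spines still avoids $P$ (for infinite $P$ such as $P_{s,t,c}$ a diagonal union of $P$-free sets need not be $P$-free without an argument like the one in the super-additivity lemma, and the spines themselves could create a copy of $P$ together with the tiles), and you never verify saturation. The latter is the real difficulty: a small open ball added in an off-diagonal region of $[0,n]^2$, far from every tile, must combine with the existing set to form a copy of $P$, and when containment of $P$ requires positive one-dimensional measure in several distinct rows (e.g.\ two stacked segments), an $O(n_0)$-area spine per tile provides no mechanism for this. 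You would also need to address whether $\osat(n,P)$ being defined at one scale $n_0$ implies the existence of \emph{any} saturating set at larger scales, which your tiling implicitly assumes it produces.

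The lower-bound dichotomy has a more serious structural problem. The strip classification into cases (a) and (b) is essentially the entire content of the theorem and is asserted as a ``goal,'' not established; the proposed ``compactness argument on the space of open subsets'' names no topology in which that space is compact and Lebesgue measure is suitably continuous (it is not continuous under Hausdorff convergence, for instance). The ensuing case analysis is also logically incomplete: ``a positive fraction of strips in case (b)'' versus ``all but boundedly many in case (a)'' omits exactly the intermediate regimes, such as $\Theta(\sqrt{n})$ strips in case (b), that correspond to the growth rates $\osat(n,P)=\Theta(n^{\alpha})$, $0<\alpha<1$, which the dichotomy must rule out. Finally, even granting that every strip falls in case (a) with measure at most $C(P,L)$, summing over the $\Theta(n/L)$ strips yields $O(n)$, not the $O(1)$ bound you claim, so the intended contradiction with $\osat(n_i,P)\to\infty$ does not follow. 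Your closing paragraph correctly identifies most of these obstacles yourself; until the strip classification and the saturation of the tiled construction are actually proved, the conjecture remains open.
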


We conjecture that $\osat(n, P)$ is defined for every finite subset $P \subseteq \mathbb{R}^2$, and that there is a relationship between $\osat(n, P)$ and $\sat(n, M_P)$ for finite subsets $P$ that is analogous to the relationship between $\px(n, P)$ and $\ex(n, M_P)$ in Theorem \ref{mainth}.

\begin{conj}
For every finite subset $P \subseteq \mathbb{R}^2$, the saturation function $\osat(n, P)$ is defined and we have $\osat(n, P) = \Theta(\sat(n, M_P))$.
\end{conj}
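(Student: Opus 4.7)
The plan is to mirror the two-sided argument of Theorem \ref{mainth}, showing $\osat(n, P) = O(\sat(n, M_P))$ by an explicit construction (which simultaneously establishes that $\osat(n, P)$ is well-defined) and $\osat(n, P) = \Omega(\sat(n, M_P))$ by a discretization argument. As in Section \ref{mainsub}, the constants in both bounds are expected to depend on the pairwise distances between the rows and columns of $P$.

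For the upper bound, I would start from an $M_P$-saturating 0-1 matrix $A$ of dimensions $\lceil n/c \rceil \times \lceil n/c \rceil$ with $\sat(\lceil n/c \rceil, M_P)$ ones, where $c$ is chosen smaller than the minimum pairwise distances between the rows and columns of $P$ as in Theorem \ref{lowerth}. Let $S_0$ be the union of open $c \times c$ squares corresponding to the ones of $A$, which is already $P$-free by the proof of Theorem \ref{lowerth}. The plan is to enrich $S_0$ with a low-measure \emph{scaffolding} of thin, carefully placed open regions to obtain the required saturating set $S$. The scaffolding must be designed so that (i) its measure is $O(\sat(\lceil n/c \rceil, M_P))$, (ii) it introduces no copy of $P$, and (iii) for every open ball $B$ in the complement of $S$, the union $S \cup B$ contains a copy of $P$, obtained by taking the $M_P$-copy guaranteed by the 0-1 saturation of $A$ after flipping the cell containing $B$ to $1$ and realizing this matrix copy as a subset copy using $B$ together with nearby scaffolding and squares of $S_0$.

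For the lower bound, I would convert a $P$-saturating open set $S \subseteq [0, n]^2$ into an $M_P$-saturating 0-1 matrix via the grid discretization used in the upper-bound proof of Theorem \ref{mainth}. Using the approximating subsets $Z_{t, r} \subseteq S$ from that proof, define a 0-1 matrix $A$ with a $1$ in cell $(i, j)$ whenever the interior of the corresponding $\tfrac{n}{r} \times \tfrac{n}{r}$ square lies in $S$. By Theorem \ref{mainth} and Lemma \ref{tardos_blank}, $A$ avoids $S(M_P, \lceil r/n \rceil)$ and has $\Omega(\mu_2(S))$ ones. The key step is to argue that $A$ can be completed, by adding $o(\mu_2(S))$ further ones, into a matrix saturating for $S(M_P, \lceil r/n \rceil)$: any zero entry whose flip still avoids $S(M_P, \lceil r/n \rceil)$ would correspond to an open grid square that we could adjoin to $S$ without creating $P$, contradicting the $P$-saturation of $S$. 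Passing to the limits $r, t \to \infty$ and relating $\sat$ for $S(M_P, k)$ back to $\sat$ for $M_P$ via a padding argument analogous to Lemma \ref{tardos_blank} would then yield the claimed bound.

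The main obstacle in both directions is a scale mismatch: 0-1 saturation concerns flipping an entire grid cell, while $P$-saturation concerns adding an arbitrarily small open ball. In the lower-bound direction this is the central issue, since $P$-saturation of $S$ only rules out local perturbations, whereas saturation of the extracted matrix $A$ must rule out the coarser operation of filling a full cell. In the upper-bound direction, the scaffolding has to mediate between the coarse-grid information coming from $A$ and the fine-grained perturbations allowed on the continuous side; it must simultaneously be dense enough that no added ball is isolated, yet have low measure and contain no copy of $P$. The planned bridge in both directions is to exploit the full flexibility of the Lipschitz-like maps $f_X$ and $f_Y$ in the definition of containment, which is what made Theorem \ref{mainth} succeed: a single added open ball can be ``stretched'' into a copy of $P$ when $S$ already provides partial scaffolding, and flipping a single cell of $A$ can be realized as an open disk plus stretching. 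Making this correspondence precise and uniform over all finite $P$ is the crux of the argument.
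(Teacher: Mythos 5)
This statement is posed as an open conjecture in the paper; no proof is given there, and your proposal does not close it either --- it is the natural two-sided strategy modeled on Theorem \ref{mainth}, but both of its central steps are left as unconstructed black boxes. In the upper bound, everything rests on the ``scaffolding,'' and the conditions (i)--(iii) you impose on it are precisely the content of the conjecture: you need an open set of measure $O(\sat(n,M_P))$ that avoids $P$ and yet is positioned so that \emph{every} open ball of \emph{every} radius centered at \emph{every} point of the complement in $(0,n)^2$ completes a copy of $P$. This must hold in particular for balls centered on the topological boundary of $S$ itself and for balls of radius far below the grid scale $c$, so the witnessing structure has to be present at all scales near all complementary points; no candidate construction is given, and without one even the existence claim (that $\osat(n,P)$ is defined at all) remains unproved. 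For matrices with $\sat(n,M)=O(1)$, as in \cite{ben} and \cite{gsat}, the scaffolding would have to reach every point of $[0,n]^2$ while having bounded total measure, which is where the difficulty is most visible.

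In the lower bound, the key assertion --- that a zero cell of $A_{t,r}$ whose flip avoids $S(M_P,\lceil r/n\rceil)$ corresponds to an open grid square that could be adjoined to $S$ without creating $P$ --- does not follow. The matrix $A_{t,r}$ records only those grid squares wholly contained in $S$; when a small ball $B$ is added to $S$, the copy of $P$ guaranteed by saturation may use points of $S$ lying in partially filled squares or in the discarded sets $B_{t,r}$ and $L_{t,r}$, so saturation of $S$ gives no control over flips of $A_{t,r}$, and the completion by ``$o(\mu_2(S))$ further ones'' is unjustified. Moreover the final reduction is not ``analogous to Lemma \ref{tardos_blank}'': the Tardos splitting shows that \emph{avoidance} of $S(M,k)$ restricts to avoidance of $M$ on each congruence class, but \emph{saturation} does not restrict, since a flipped zero in one class can create a copy of $S(M,k)$ spread across several classes; and the quantity you would end up with, $\sat(r,S(M_P,\lceil r/n\rceil))$ as $r\to\infty$, involves a pattern growing with the matrix, which is outside the scope of the Fulek--Keszegh dichotomy and of every lemma in the paper. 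The scale mismatch you correctly identify is not a technicality to be smoothed over at the end; it is the open problem itself.
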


A different saturation function for 0-1 matrices was investigated in \cite{dpt}, it would also be natural to see if there is some saturation function for subsets of $\mathbb{R}^2$ that encompasses the function in \cite{dpt}. Another possible research direction is to investigate other natural variants of the definitions for $\px(n, P)$ and $\osat(n, P)$. In our definition of $\px(n, P)$, we required the $P$-free subsets $S \subseteq [0, n]^2$ to be open. This allowed us to show that $\px(n, P) = \Theta(\ex(n, M_P))$, along with many other results. It would be interesting to see how the results in this paper change if we require the $P$-free subsets $S$ to be closed instead. Alternatively we could require the sets $S$ to be Borel sets, or to be Lebesgue measurable sets in the definitions of $\px(n, P)$ and $\osat(n, P)$.

It would also make sense to investigate $\px(n, P)$ and $\osat(n, P)$ over all open $P$-free subsets $S \subseteq [0, m] \times [0, n]$. This is in analogue with the 0-1 matrix extremal function $\ex(n, M)$, which has a more general variant $\ex(m, n, M)$ that maximizes the number of ones among all $m \times n$ $M$-free 0-1 matrices. Other regions besides rectangles could also be considered for replacing $[0, n]^2$, such as balls of radius $n$.

\end{document}